 \numberwithin{equation}{section}
\theoremstyle{plain}
\newtheorem{thm}{Theorem}[section]
\newtheorem{cor}[thm]{Corollary}
\newtheorem{lem}[thm]{Lemma}
\newtheorem{prop}[thm]{Proposition}
\theoremstyle{definition}
\newtheorem{defn}[thm]{Definition}
\theoremstyle{remark}
\newtheorem{rem}[thm]{Remark}
\newcommand{\N}{\mathbb{N}}
\newcommand{\R}{\mathbb{R}}
\newcommand{\Z}{\mathbb{Z}}
\newcommand{\pv}{\text{P.V.}}
\newcommand{\lap}{\Delta}
\newcommand{\e}{\varepsilon}
\newcommand{\I}{\infty}
\newcommand{\bp}{\begin{proof}[\ensuremath{\mathbf{Proof}}]}
\newcommand{\ep}{\end{proof}}
\newcommand{\osc}{\operatorname{osc}}
\newcommand{\J}{{\cal J}_p}
\newcommand{\laps}{(-\lap_p)^s}
\begin{document}

\title{H\"older estimates and large time behavior for a nonlocal doubly nonlinear evolution}

\author{Ryan Hynd\footnote{rhynd@math.upenn.edu, Department of Mathematics, University of Pennsylvania, Philadelphia, Pennsylvania 19104.  Partially supported by NSF grant DMS-1301628.}\; and Erik Lindgren\footnote{eriklin@kth.se, Department of Mathematics, Royal Institute of Technology, 100 44 Stockholm, Sweden. Supported by the Swedish Research Council, grant no. 2012-3124. Partially supported by the Royal Swedish Academy of Sciences. Part of this work was carried out when the second author was visiting University of Pennsylvania. The math department and its facilities are kindly acknowledged.}}  

\maketitle

%  Abstract  
\begin{abstract} \noindent 		The nonlinear and nonlocal PDE
$$
|v_t|^{p-2}v_t+(-\lap_p)^sv=0 \, ,
$$
where
$$
%(-\lap_p)^s u\, (x):=2 \,\pv \int_{\R^n}\frac{|u(x)-u(x+y)|^{p-2}(u(x)-u(x+y))}{|y|^{n+sp}}\, dy,
(-\lap_p)^s v\, (x,t)=2 \,\pv \int_{\R^n}\frac{|v(x,t)-v(x+y,t)|^{p-2}(v(x,t)-v(x+y,t))}{|y|^{n+sp}}\, dy,
$$
has the interesting feature that an associated Rayleigh quotient is non-increasing in time along solutions. We prove the existence of a weak solution of the corresponding initial value problem which is also unique as a viscosity solution. Moreover, we provide H\"older estimates for viscosity solutions and relate the asymptotic behavior of solutions to the eigenvalue problem for the fractional $p$-Laplacian.
\end{abstract}

\noindent {\bf   AMS classification:} 35J60, 47J35, 35J70, 35R09\\
\noindent {\bf Keywords:} Doubly nonlinear evolution, H\"older estimates, eigenvalue problem, fractional $p$-Laplacian, non-local equation\\

%%%%%%%%%%%%%%%%%%%%%%%%%%%%%%%%%% Introduction %%%%%%%%%%%%%%%%%%%%%%%%%%%%%%%%%
\section{Introduction}
We study the nonlinear and nonlocal PDE
\begin{equation}\label{MainPDE}
|v_t|^{p-2}v_t+(-\Delta_p)^s v=0
\end{equation}
where $p\in (1,\infty)$, $s\in (0,1)$ and $(-\Delta_p)^s$ is the fractional $p$-Laplacian
\begin{equation}\label{pv}
(-\Delta_p)^s u\, (x):=2\, \pv \int_{\R^n}\frac{|u(x)-u(x+y)|^{p-2}(u(x)-u(x+y))}{|y|^{n+sp}}\, dy.
\end{equation}
Here and throughout $\pv$ denotes principal value. The main reason of our interest in solutions of \eqref{MainPDE} is the connection with ground states for $(-\lap_p)^s$, i.e., extremals of the non-local Rayleigh quotient
\begin{equation}\label{rayleigh}
\lambda_{s,p} = \inf_{u\in W_0^{s,p}(\Omega)\setminus \{0\}}\frac{\displaystyle \int_{\R^n}\int_{\R^n} \frac{|u(x)-u(y)|^p}{|x-y|^{n+sp}} dx dy}{\displaystyle\int_\Omega |u(x)|^p dx}.
\end{equation}
Here and throughout $\Omega\subset \R^n$ is a bounded domain. Clearly, $1/\lambda_{s,p}$ is the optimal constant in the Poincar\'e inequality in the fractional Sobolev space $W_0^{s,p}(\Omega)$. 

\par In recent years there has been a surge of interest around this nonlinear and nonlocal eigenvalue problem, see \cite{LL14}, \cite{BF14}, \cite{BP14}, \cite{BPS15}, \cite{PS14}, \cite{FP14} and \cite{IS14}. In particular, it is known that ground states (or first eigenfunctions) are unique up to a multiplicative constant and have a definite sign (see Theorem 14 in \cite{LL14} together with Corollary 3.14 in \cite{BP14}). The corresponding local problem (formally $s=1$), i.e., the eigenvalue problem for the $p$-Laplacian, has been extensively studied throughout the years. See for instance \cite{Lie83}, \cite{Lin90} and \cite{Lin08}.

\par The first of our main results is a local H\"older estimate for \emph{viscosity solutions} of \eqref{MainPDE}. This is one of the first continuity estimates for parabolic equations involving the fractional $p$-Laplacian.
\begin{thm}\label{Holderthm} Let $p\geq 2$, $s\in (0,1)$ and $v\in L^\infty(\R^n\times (-2,0])$ be a viscosity solution of 
$$|v_t|^{p-2}v_t +(-\lap_p)^s v =0 \text{ in }B_2\times (-2,0].
$$
Then $v$ is H\"older continuous in $B_1\times (-1,0]$ and in particular there exist $\alpha$ and $C$ depending on $p$ and $s$ such that 
$$
\|v\|_{C^\alpha(B_1\times (-1,0])}\leq C\|v\|_{L^\infty(\R^n\times (-2,0])}.
$$
\end{thm}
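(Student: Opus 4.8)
The plan is to follow the now-standard strategy for H\"older continuity of viscosity solutions to nonlocal parabolic equations, adapted to the doubly nonlinear and nonlocal structure of \eqref{MainPDE}: prove an oscillation decay lemma at dyadic parabolic scales, iterate it, and handle the nonlocal tail by exploiting the uniform $L^\infty$ bound. After a normalization we may assume $\|v\|_{L^\infty(\R^n\times(-2,0])}\le 1$. Fix a point $(x_0,t_0)\in B_1\times(-1,0]$; by translation it suffices to work at the origin. For $r\in(0,1]$ write $Q_r=B_r\times(-r^{p},0]$ (the exponent $p$ on the time scale is dictated by the homogeneity of the equation: replacing $v(x,t)$ by $\mu^{-1}v(rx,\mu^{1-p}r^{sp}t)$ preserves the class of subsolutions/supersolutions, so the natural parabolic cylinder scales like $r^{sp}$ in time after absorbing the factor coming from the $|v_t|^{p-2}$ term — one should check the precise exponent, but the point is that a scale-invariant family of cylinders exists). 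Set $\omega(r)=\osc_{Q_r}v$.

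The core is an \emph{improvement-of-oscillation} lemma: there exist $\lambda\in(0,1)$ and $r_0\in(0,1)$, depending only on $p,s,n$, such that $\omega(\lambda r)\le (1-\theta)\,\omega(r)$ for some fixed $\theta\in(0,1)$, provided $r\le r_0$. To prove it, rescale so that $\osc_{Q_1}v\le 2$ and, after subtracting the midrange, assume $|v|\le 1$ in $Q_1$; the obstruction is that $v$ need not be bounded by $1$ outside $Q_1$, only by (say) $M$, which grows under the rescaling. One controls this by a standard ``shrinking of the good set'' argument: either $v\le 0$ on more than half of $Q_{1/2}$ (in measure) or $v\ge 0$ on more than half — in the first case we push $v$ up using that it is a viscosity supersolution, comparing with a suitable barrier built from a smooth bump whose nonlocal energy $(-\lap_p)^s$ we can compute or estimate, together with the tail term $\int_{|y|\ge 1}\frac{|v(x+y)|^{p-1}}{|y|^{n+sp}}dy \le C M^{p-1}$. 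The key quantitative point, exactly as in De~Giorgi–Nash–Moser iterations for nonlocal equations (cf.\ the fractional $p$-Laplacian literature cited: \cite{BP14}, \cite{IS14}), is that the good-set hypothesis is self-improving through a discrete-in-time/measure iteration so that after finitely many steps the function is bounded away from its supremum on the smaller cylinder, and crucially the \emph{relative} improvement $\theta$ does not depend on $M$ because the tail contributes at most a fixed multiple of $\omega(r)$ once $r\le r_0$ (here one uses $\omega(r)\ge$ a geometric lower bound coming from the iteration, or alternatively one runs the iteration on the rescaled solution and tracks that the tail of the rescaled solution is summably controlled by the $\sum_j \lambda^{j(sp - \text{something})}$ geometric series — this is the delicate bookkeeping).

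Granting the oscillation lemma, the H\"older estimate follows by iteration: $\omega(\lambda^k r_0)\le (1-\theta)^k \omega(r_0)\le (1-\theta)^k\cdot 2$, which by the usual interpolation between scales gives $\omega(\rho)\le C\rho^\alpha$ with $\alpha=\log(1-\theta)/\log\lambda>0$ and $C$ depending only on $p,s,n$; since the point in $B_1\times(-1,0]$ was arbitrary and $v$ is globally bounded by $1$, this yields the stated bound with $C$ rescaled by $\|v\|_{L^\infty(\R^n\times(-2,0])}$, and continuity up to the terminal time $t=0$ because our cylinders $Q_r$ reach back in time only.

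The main obstacle I anticipate is the interaction between the \emph{doubly nonlinear} time term $|v_t|^{p-2}v_t$ and the nonlocal operator in the viscosity framework: unlike the linear-in-time case, the natural scaling couples the time and space exponents through $p$ and $s$ simultaneously, so one must choose the parabolic cylinders and barriers with care, and the comparison arguments must be carried out purely at the level of viscosity test functions (touching from above/below), since no energy formulation is available for merely bounded viscosity solutions. Building barriers that are genuine strict sub/supersolutions of $|\phi_t|^{p-2}\phi_t+(-\lap_p)^s\phi=0$ — rather than of a linear equation — and verifying the sign of $(-\lap_p)^s$ on them, together with the uniform (in $M$) control of the tail, is where the real work lies; everything else is bookkeeping that follows the template already developed for the stationary fractional $p$-Laplacian and for linear nonlocal parabolic equations.
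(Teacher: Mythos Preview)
Your high-level strategy — oscillation reduction at one scale, dyadic iteration, and inductive control of the nonlocal tail — is exactly the paper's approach, and your identification of the barrier construction as ``where the real work lies'' is correct. Two corrections are needed, though.

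First, the parabolic scaling: the equation is invariant under $v(x,t)\mapsto \lambda^{-1}v(rx, r^{sp/(p-1)}t)$ (the power $|v_t|^{p-2}v_t$ is homogeneous of degree $p-1$, matching $(-\lap_p)^s$), so the correct cylinders are $Q_r^- = B_r\times(-r^{sp/(p-1)},0]$, not $B_r\times(-r^p,0]$. This matters because the induction that propagates the growth hypothesis $v(y,t)\le 2|2y|^\eta-1$ on $\R^n\setminus B_1$ to the rescaled solution uses this exact exponent.

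Second, and more substantively, your sketch of the oscillation lemma invokes ``De Giorgi--Nash--Moser iterations'' and a ``discrete-in-time/measure iteration'' as in the cited stationary references. That route requires Caccioppoli-type energy inequalities, which you do not have for a merely bounded viscosity solution of a doubly nonlinear equation. The paper instead follows Silvestre's viscosity method: the barrier is the explicit function
\[
b(x,t)=1+\varepsilon - m(t)\rho(x),\qquad m'(t)=c_0\,|\{v(\cdot,t)\le 0\}\cap B_1|\;-\;c_1\,m(t),\quad m(-1)=0,
\]
with $\rho\in C_0^\infty(B_{3/4})$ a fixed bump. The measure of the sublevel set feeds directly into the ODE for $m$, so no iteration on level sets is needed; one simply assumes $v$ first touches $b$ from below at some interior point and derives a contradiction by estimating $(-\lap_p)^s v$ pointwise there (using that a touched subsolution is a pointwise subsolution) against $|b_t|^{p-2}b_t$ and $(-\lap_p)^s b$, balancing the constants $c_0,c_1$ and letting the tail exponent $\eta\to 0$. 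The iteration step is then precisely your dyadic scheme, with $\alpha$ chosen so that $1-\theta/2\le 2^{-\alpha}$ and $\alpha\le\eta$, the latter ensuring the rescaled tail bound $w(y,t)\le 2|2y|^\eta-1$ is reproduced at each step from the previous oscillation estimates.
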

We also study the initial value problem
\begin{equation}\label{pParabolic}
\begin{cases}
|v_t|^{p-2}v_t+(-\Delta_p)^s v=0,& \Omega\times (0,\infty) \\
v=0, & \R^n\setminus \Omega\times [0,\infty)\\
 v=g,&  \Omega\times\{0\}
\end{cases}
\end{equation}
and show that \eqref{pParabolic} has a {\it weak solution} in the sense of a doubly nonlinear evolution and a unique \emph{viscosity solution}. In addition, we relate the long time behavior of solutions to the eigenvalue problem for the fractional $p$-Laplacian. These results are presented in the two theorems below.
\begin{thm}\label{LargeTthm} Let $p\in (1,\infty)$ and $s\in (0,1)$. Assume $g\in W^{s,p}_0(\Omega)$ and define   
\begin{equation}%\label{mup}
\mu_{s,p}:=\lambda_{s,p}^{\frac{1}{p-1}}.\nonumber
\end{equation}
Then for any weak solution $v$ of \eqref{pParabolic}
\begin{equation}\label{FullTime}
w(x):=\lim_{t\rightarrow \infty}e^{\mu_{s,p} t}v(x,t)
\end{equation}
exists in $W^{s,p}(\R^n)$ and is a ground state for $(-\lap_p)^s$, provided it is not identically zero. In this case, $v(\cdot, t)\neq 0$ for $t\ge 0$ and 
$$
\lambda_{s,p}=\lim_{t\rightarrow \infty}\frac{\displaystyle \int_{\R^n}\int_{\R^n} \frac{|v(x,t)-v(y,t)|^p}{|x-y|^{n+sp}} dx dy}{\displaystyle\int_\Omega|v(x,t)|^pdx}. 
$$
\end{thm}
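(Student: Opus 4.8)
The plan is to exploit the monotonicity of the Rayleigh quotient along the flow, which is the structural feature the authors emphasize in the abstract. For a weak solution $v$ of \eqref{pParabolic}, set
\[
R(t):=\frac{\displaystyle \int_{\R^n}\int_{\R^n} \frac{|v(x,t)-v(y,t)|^p}{|x-y|^{n+sp}} dx dy}{\displaystyle\int_\Omega|v(x,t)|^pdx}.
\]
First I would verify, by differentiating in $t$ and using the equation (testing against $v_t$ and against $v$, as is standard for doubly nonlinear evolutions), that $R(t)$ is non-increasing; hence $R(t)\downarrow R_\infty\ge \lambda_{s,p}$ as $t\to\infty$. The energy identity also yields that $t\mapsto \int_\Omega|v(\cdot,t)|^p$ decays and, more precisely, that the natural energies are integrable in time, so that there is a sequence $t_k\to\infty$ along which $v_t(\cdot,t_k)\to 0$ in an appropriate sense. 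A rescaling argument is then needed: because of the homogeneity of the equation and the expected exponential decay rate $\mu_{s,p}=\lambda_{s,p}^{1/(p-1)}$, one introduces $u(x,t):=e^{\mu_{s,p}t}v(x,t)$ and shows, using the energy estimates, that $u(\cdot,t)$ stays bounded in $W_0^{s,p}(\Omega)$ and is, in a suitable sense, almost a steady state of the rescaled (stationary-in-the-limit) problem.

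The core of the argument is then to upgrade subsequential convergence to full convergence and to identify the limit. I would show that $w$ (when nonzero) satisfies $(-\lap_p)^s w=\lambda_{s,p}|w|^{p-2}w$ in $\Omega$ with $w=0$ outside: this follows by passing to the limit in the weak formulation along $t_k$, using that $v_t(\cdot,t_k)\to 0$ and that $R(t_k)\to R_\infty$, and a lower-semicontinuity argument forces $R_\infty=\lambda_{s,p}$, so $w$ is an extremal of the Rayleigh quotient, i.e. a ground state. Because the paper has already recorded (via Theorem 14 in \cite{LL14} and Corollary 3.14 in \cite{BP14}) that ground states are unique up to scalar multiple and of one sign, the possible limits along \emph{any} sequence differ only by a constant; combined with a monotonicity/compactness argument (the map $t\mapsto \|e^{\mu_{s,p}t}v(\cdot,t)\|$ being controlled, plus the fact that distinct limit constants would contradict the monotone behavior of $R$ or of a suitable energy), one concludes the full limit \eqref{FullTime} exists in $W^{s,p}(\R^n)$.

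Finally, the strict positivity statements: if the limit $w$ is not identically zero then, since $w$ has a sign and the convergence $e^{\mu_{s,p}t}v(\cdot,t)\to w$ is in $W^{s,p}$, for large $t$ we get $v(\cdot,t)\neq 0$; to push this down to \emph{all} $t\ge 0$ one invokes a comparison/strong-minimum principle for the evolution (no solution starting from a nonzero $g$ can vanish on $\Omega$ at a finite time and then become nonzero later, given backward uniqueness-type rigidity, or more simply: the flow preserves the sign and $\|v(\cdot,t)\|_{L^p(\Omega)}$ cannot hit zero before the exponential rate would force $w\equiv 0$). The displayed identity $\lambda_{s,p}=\lim_{t\to\infty}R(t)$ is then just the restatement $R_\infty=\lambda_{s,p}$ obtained above.

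\textbf{The main obstacle} I anticipate is the passage to the limit in the nonlocal nonlinear operator along $t_k$ — establishing enough compactness (strong $L^p$ convergence of $v(\cdot,t_k)$ and weak convergence of the nonlinear flux $|v(x)-v(y)|^{p-2}(v(x)-v(y))/|x-y|^{(n+sp)/p'}$ to the right thing) to identify $w$ as a genuine weak eigenfunction rather than merely a weak limit, and then bootstrapping subsequential to full convergence without a priori knowing $w\neq 0$. The doubly nonlinear term $|v_t|^{p-2}v_t$ makes the energy bookkeeping delicate, so care is needed to extract the decay of $v_t$ in the correct norm.
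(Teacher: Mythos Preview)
Your overall strategy is close to the paper's, but there is a genuine gap in the step upgrading subsequential convergence to full convergence. You argue that, by simplicity of ground states, any two subsequential limits of $e^{\mu_{s,p}t}v(\cdot,t)$ differ only by a scalar multiple, and then propose to rule out distinct constants via the monotonicity of $R(t)$ or of $t\mapsto \|e^{\mu_{s,p}t}v(\cdot,t)\|$. This does not work as stated: the paper shows that $e^{\mu_{s,p}pt}[v(\cdot,t)]^p_{W^{s,p}(\R^n)}$ is nonincreasing and converges to some $S>0$, so \emph{every} subsequential limit $w_0$ satisfies $[w_0]^p_{W^{s,p}(\R^n)}=S$. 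By simplicity the only candidates are $w_0$ and $-w_0$, which have identical $L^p$ norm, identical seminorm, and identical Rayleigh quotient. None of the monotone scalar quantities you invoke can distinguish between them, so you cannot exclude that $e^{\mu_{s,p}t}v(\cdot,t)$ oscillates between neighborhoods of $w_0$ and $-w_0$ as $t\to\infty$.

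The paper handles this with a separate, nontrivial ingredient (Lemma~\ref{lem:sign} and Corollary~\ref{cor:sign}): once the Rayleigh quotient of $e^{\mu_{s,p}t}v(\cdot,t)$ is sufficiently close to $\lambda_{s,p}$ and the positive part carries at least half the mass of $w_0$, this lower bound on $\int_\Omega|(e^{\mu_{s,p}t}v)^+|^p\,dx$ persists for all later times. The proof is by contradiction and compactness (via Theorem~\ref{CompactnessLem} applied to the time-shifted solutions), not by any monotonicity. This sign-tracking is exactly what your outline is missing. Relatedly, your route to $v(\cdot,t)\neq 0$ for all $t\ge 0$ via backward uniqueness or comparison is unnecessary: once $S>0$, the monotonicity of $e^{\mu_{s,p}pt}[v(\cdot,t)]^p_{W^{s,p}}$ gives $[v(\cdot,t)]_{W^{s,p}}>0$ for every $t$ directly.
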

\begin{thm}\label{LargeTthmvisc} Let $p\geq 2$, $s\in (0,1)$, $\Omega$ be a $C^{1,1}$ domain and assume that $g\in W_0^{s,p}(\Omega)\cap C(\overline\Omega)$ satisfies $|g|\leq \Psi$, where $\Psi$ is a ground state for $(-\lap_p)^s$. Then there is a unique viscosity solution of \eqref{pParabolic} that is also a weak solution. In addition, the convergence in \eqref{FullTime} is uniform in $\overline\Omega$.
\end{thm}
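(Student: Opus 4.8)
\emph{Proof strategy.} Everything is driven by the two explicit solutions $\pm\,\Psi(x)e^{-\mu_{s,p}t}$ of \eqref{MainPDE}. Since $\laps$ is positively homogeneous of degree $p-1$ and $\laps\Psi=\lambda_{s,p}\Psi^{p-1}$, in both the weak and the viscosity sense, the identity $\mu_{s,p}^{\,p-1}=\lambda_{s,p}$ shows by a one--line computation that $\Psi e^{-\mu_{s,p}t}$ and $-\Psi e^{-\mu_{s,p}t}$ are weak \emph{and} viscosity solutions of \eqref{MainPDE}. Because $g\in C(\overline\Omega)$ vanishes on $\partial\Omega$ and $-\Psi\le g\le\Psi$, these function as global barriers for \eqref{pParabolic}: the solution will satisfy
\[
-\Psi(x)\,e^{-\mu_{s,p}t}\ \le\ v(x,t)\ \le\ \Psi(x)\,e^{-\mu_{s,p}t},\qquad x\in\R^n,\ t\ge0,
\]
which I would obtain from viscosity comparison once continuity is in hand (or by carrying the bound through the time discretization used to construct a solution). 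Since $\Omega$ is $C^{1,1}$, the ground state $\Psi$ lies in $C(\overline\Omega)$, is positive in $\Omega$ and vanishes on $\partial\Omega$, so this two--sided bound both gives a uniform--in--time $L^\infty$ estimate and forces $v$ to be uniformly small near $\partial\Omega$.

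\textbf{A solution that is simultaneously weak and viscosity, and uniqueness.} The plan is to start from a weak solution $v$ of \eqref{pParabolic} (which exists by the construction behind Theorem~\ref{LargeTthm}) and show, under the present hypotheses, that it is also the viscosity solution. Any uniform $L^\infty$ bound on $v$ — available as above — lets me apply Theorem~\ref{Holderthm}, after translating and using the parabolic rescaling leaving \eqref{MainPDE} invariant, on cylinders compactly contained in $\Omega\times(0,\infty)$, to get local H\"older continuity there. Squeezing $v$ between $\mp\Psi e^{-\mu_{s,p}t}$ near $\partial\Omega$ and using sub/supersolution barriers that flatten onto $g$ as $t\downarrow0$, I would upgrade this to $v\in C(\overline\Omega\times[0,\infty))$ with $v=0$ off $\Omega$ and $v(\cdot,0)=g$. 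Then a continuous weak solution is a viscosity solution: if a smooth $\varphi$ touches $v$ from above at an interior point, test the weak formulation against a bump concentrating there while keeping $v$ itself in the nonlocal tail, exactly as in the definition of viscosity solution for nonlocal operators, to produce the required pointwise inequality, and argue symmetrically from below. Finally, two viscosity solutions of \eqref{pParabolic} share the same parabolic data ($0$ on $(\R^n\setminus\Omega)\times[0,\infty)$ and $g$ on $\overline\Omega\times\{0\}$), so the comparison principle for viscosity solutions applied both ways forces them to coincide; hence the viscosity solution is unique and equals the weak solution just produced.

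\textbf{Uniform convergence.} Put $u(x,t):=e^{\mu_{s,p}t}v(x,t)$. By Theorem~\ref{LargeTthm}, $u(\cdot,t)\to w$ in $W^{s,p}(\R^n)$, hence in $L^p(\Omega)$, as $t\to\infty$, where $w$ is a ground state or $\equiv 0$; in either case $w\in C(\overline\Omega)$ and $w=0$ on $\partial\Omega$. The barrier gives $|u(x,t)|\le\Psi(x)$ for every $t$, so $\{u(\cdot,t)\}_{t\ge 2}$ is uniformly bounded and, since $\Psi\in C(\overline\Omega)$ with $\Psi|_{\partial\Omega}=0$, uniformly small on $\{x\in\overline\Omega:\dist(x,\partial\Omega)<\delta\}$ for small $\delta$, independently of $t$ (and the same holds for $|w|$). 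On $\{x\in\overline\Omega:\dist(x,\partial\Omega)\ge\delta\}$ I would apply Theorem~\ref{Holderthm} to $v$ on cylinders of a fixed scaled size and multiply the resulting H\"older estimate by $e^{\mu_{s,p}t}$, which is cancelled precisely by the decay $\|v\|_{L^\infty}\lesssim e^{-\mu_{s,p}t}$ from the barrier; this shows $\{u(\cdot,t)\}_{t\ge 2}$ is equicontinuous there. By Arzel\`a--Ascoli this family is then precompact in $C(\{\dist(\cdot,\partial\Omega)\ge\delta\})$, and since any uniform limit along $t_k\to\infty$ must agree with the $L^p$--limit $w$, one gets $u(\cdot,t)\to w$ uniformly on $\{\dist(\cdot,\partial\Omega)\ge\delta\}$ for every $\delta$; combined with the uniform smallness near $\partial\Omega$ this yields uniform convergence on all of $\overline\Omega$ (trivially on $\R^n\setminus\Omega$), i.e.\ the uniform form of \eqref{FullTime}.

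\textbf{Main difficulty.} I expect the heart of the matter to be the passage from weak to viscosity solution: one must first make the weak solution continuous up to the parabolic boundary and at $t=0$, and then reconcile the local nature of a touching test function with the global reach of $\laps$. The $C^{1,1}$ hypothesis on $\Omega$ together with the two--sided bound by $\Psi$ are precisely what make this boundary analysis — and the boundary part of the uniform--convergence argument — go through.
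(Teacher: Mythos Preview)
Your argument for the uniform convergence is essentially the paper's: bound $e^{\mu_{s,p}t}v$ by $\Psi$, combine interior H\"older estimates with smallness of $\Psi$ near $\partial\Omega$ to get equicontinuity on $\overline\Omega$, then apply Arzel\`a--Ascoli and identify the limit via Theorem~\ref{LargeTthm}. The paper works with the shifted family $v^k(x,t)=e^{\mu_{s,p}\tau_k}v(x,t+\tau_k)$ rather than $u(x,t)=e^{\mu_{s,p}t}v(x,t)$, but the content is the same.

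The first part, however, has a genuine circularity. You propose to start from a weak solution $v$, use the barrier $|v|\le\Psi e^{-\mu_{s,p}t}$ to get an $L^\infty$ bound, and then invoke Theorem~\ref{Holderthm} to make $v$ continuous \emph{before} proving it is a viscosity solution. But Theorem~\ref{Holderthm} is stated and proved only for \emph{viscosity} solutions: its proof (Lemma~\ref{lem:key}) repeatedly touches $v$ from above with a smooth barrier and uses the viscosity inequalities through Proposition~\ref{pw} and Proposition~\ref{prop:jut}. You therefore cannot appeal to it until you already know $v$ is a viscosity solution, which is exactly what you are trying to establish. The barrier bound itself suffers from the same problem: it comes either from the viscosity comparison principle (Proposition~\ref{prop:comparison}), which is circular, or---as you note parenthetically---from the time discretization, at which point you are on the paper's path anyway.

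The paper avoids this trap by going in the opposite direction. It takes half-relaxed limits $\overline v,\underline v$ of the step approximations $v_N$ built from the implicit scheme \eqref{ViscScheme}, shows directly (Lemmas~\ref{SchemeVISC}, \ref{LemdiscreteVisc}, \ref{ApproxPoints}) that $\overline v$ is a viscosity subsolution and $\underline v$ a supersolution of \eqref{MainPDE}, verifies the boundary and initial data via discrete barriers (Lemmas~\ref{lem:BC}, \ref{lem:IC}), and then concludes $\overline v=\underline v$ by comparison. Since the same $v_N$ also converges to a weak solution (Remark~\ref{rem:diffeq}), one obtains a single function that is simultaneously weak and viscosity. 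Your sketched step ``continuous weak $\Rightarrow$ viscosity via testing against a bump'' is also less routine than you suggest: the time term in \eqref{WeakForm} is $|v_t|^{p-2}v_t$, a nonlinear function of $v_t$ rather than of the test function, so localizing the weak identity does not directly yield the inequality $|\phi_t|^{p-2}\phi_t+\laps\phi^r\le0$ required by Definition~\ref{paravisc}.
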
 
In our previous work \cite{HL14}, we studied the large time behavior of the doubly nonlinear, local equation 
\begin{equation}
\label{eq:local}
|v_t|^{p-2}v_t = \lap_p v.
\end{equation}
One of the novelties of the present paper in comparison with \cite{HL14}, is that we obtain \emph{uniform} convergence to a ground state and a uniform H\"older estimate for the doubly nonlinear, nonlocal equation \eqref{MainPDE}. No such results are known for equation \eqref{eq:local}. Related to this is also the work for more general systems in \cite{H14}. The method in these papers, as the method in the present paper, differs substantially from most of the other methods used in the literature to study asymptotic behavior of nonlinear and possibly degenerate flows, as in \cite{ABC}, \cite{Aronson}, \cite{AT10}, \cite{KV88}, \cite{KL13}, \cite{SV13}. Our methods are based on energy and compactness in Sobolev spaces while most of the earlier work is based on comparison principles. This allows us, in contrast to most earlier work, to treat initial data without any assumption on the sign.

In the case of a linear equation, i.e., when $p=2$, the large time behavior of solutions is especially well understood. Due to the theory of eigenfunctions in Hilbert spaces one can then recover our result (and more) using the eigenfunction expansion. When $p\neq 2$, this expansion is not available.

The literature on equations of the type \eqref{MainPDE} is very limited. Equations of type  \eqref{eq:local} appears in \cite{KL96} and in the theory of doubly nonlinear flows. In the case of linear non-local equations, i.e., when $p=2$, the literature on regularity is vast. We mention only a fraction, see \cite{Sil12}, \cite{Sil10}, \cite{CV10} and \cite{LD14}. Neither of these results apply to our setting. However, our proof of the H\"older regularity is very much inspired by the work of Luis Silvestre in for instance \cite{Sil12} or \cite{Sil10}. We also seize the opportunity to mention the recent papers \cite{Puh15}, \cite{MRT15}, \cite{Vaz15} and \cite{War16} where the corresponding heat flow is studied, i.e. the equation
$$
v_t +(-\lap_p)^s v=0.
$$
The stationary equation, i.e., 
$$
(-\lap_p)^s v=0,
$$
has in recent years attracted a lot of attention, see \cite{IN10}, \cite{BL15}, \cite{CJ15}, \cite{CKP14}, \cite{CKP13}, \cite{CLM12}, \cite{DKKP15}, \cite{IMS15}, \cite{KMS15}, \cite{KMS15b}, \cite{War16b} and \cite{Lin14}. In \cite{BCF12} a different non-local version of the $p$-Laplacian is studied.

\par The plan of the paper is as follows. In Section \ref{sec:not}, we introduce the fractional Sobolev spaces $W^{s,p}$, the fractional $p$-Laplacian $(-\lap_p)^s$ and additional notation used in this paper. In Section \ref{sec:weak}, we define weak solutions and derive several of their important properties. The section ends with a key compactness result and some brief explanations on how to construct weak solutions. This is followed by Section \ref{sec:visc}, where we introduce viscosity solutions and prove that the weak solution constructed in Section \ref{sec:weak} is also the unique viscosity solution. In Section \ref{sec:holder}, we verify H\"older estimates for viscosity solutions. Finally, in Section \ref{sec:large}, we prove Theorem \ref{LargeTthm} and Theorem \ref{LargeTthmvisc}, which involves the large time behavior
of weak solutions. 
\paragraph{Acknowledgements.} We thank the referees for carefully reviewing this work and providing many useful suggestions.

\section{Notation and prerequisites}\label{sec:not}
The fractional Rayleigh quotient \eqref{rayleigh} naturally relates to the so-called
fractional Sobolev spaces $W^{s,p}(\R^n)$. If $1<p<\infty$ and $s\in (0,1)$ the norm is given by
$$
\|u\|^p_{W^{s,p}(\R^n)}=[u]^p_{W^{s,p}(\R^n)}+\|u\|^p_{L^p(\R^n)},
$$
where the Gagliardo seminorm is 
$$
[u]^p_{W^{s,p}(\R^n)}=\int_{\R^n}\!\!\int_{\R^n} \frac{|u(y)-u(x)|^{p}}{|y-x|^{ sp+n}} d x d y=\int_{\R^n}\!\!\int_{\R^n} |u(y)-u(x)|^{p} d\mu(x,y).
$$
Here and throughout, we will use the notation 
$$
d\mu (x,y):=|x-y|^{-n-sp} dx dy.
$$
The space $W^{s,p}_0(\Omega)$ is the closure of $C_0^\infty(\Omega)$ with respect to the norm $\|\cdot \|_{W^{s,p}(\R^n)}$. Many properties that are known for the more common Sobolev spaces $W^{1,p}$, also hold for $W^{s,p}$ and can be found in \cite{NPV11}. In particular, we have the compact embedding of $W_0^{s,p}(\Omega)$ in $L^q(\Omega)$ for $q\in [1,p]$. This result can be found in Theorem 2.7 in \cite{BLP14} (see also Theorem 7.1 on page 33 in \cite{NPV11}).

The operator $(-\lap_p)^s $ arises as the first variation of the functional 
$$[u]^p_{W^{s,p}(\R^n)}.
$$
More specifically, minimizers satisfy
$$
\int_{\R^n}\int_{\R^n}|u(x)-u(y)|^{p-2}(u(x)-u(y))(\phi(x)-\phi(y))d\mu(x,y) =0,
$$
for each $\phi\in W^{s,p}_0(\Omega)$. If the solution is regular enough, one can split this into two equal terms, make a change of variables and write the equation in the sense of the principal value, as in \eqref{pv}. Note that the notation $(-\Delta_p)^s$ is slightly abusive; this operator is not the $s$th power of $-\lap_p$ unless $p=2$. See Section 3 in \cite{NPV11}.

Ground states of $(-\lap_p)^s$ are minimizers of the  Rayleigh quotient
$$
\lambda_{s,p}=\inf_{u\in W_0^{s,p}(\Omega)}\frac{\displaystyle \int_{\R^n}\int_{\R^n} |u(x)-u(y)|^p d\mu (x,y)}{\displaystyle\int_\Omega |u(x)|^p dx},
$$
and they are signed solutions of
$$
\begin{cases}
\laps u = \lambda_{s,p}|u|^{p-2} u,& \text{in }\Omega,\\
u=0,& \text{in }\R^n\setminus \Omega.
\end{cases}
$$
The notation 
$$
\mathcal{J}_p(t)=|t|^{p-2}t
$$
will also come in handy. With this notation, equation \eqref{MainPDE} can be written as
$$
\mathcal{J}_p(v_t)+\laps v =0
$$
and the operator $\laps$ can be written as
$$
(-\lap_p)^s u(x) = 2\,\pv \int_{\R^n}\frac{\mathcal{J}_p(u(x)-u(x+y))}{|y|^{n+sp}}\, dy.
$$
Due to the scaling of the equation we introduce the following notation for parabolic cylinders
$$
Q_r(x_0,t_0)=B_r(x_0)\times (t_0-r^\frac{sp}{p-1}, t_0+r^\frac{sp}{p-1}),\quad  Q_r^-(x_0,t_0)=B_r(x_0)\times (t_0-r^\frac{sp}{p-1},t_0)
$$
where $B_r(x_0)$ is the ball of radius $r$ centered at $x_0$. When $x_0=0$ and $t_0=0$ we will simply write $B_r$, $Q_r$ and $Q_r^-$.\label{p:def}

%%%%%%%%%%%%%%%%%%%%%%%%%%%%%%%%% Weak Solutions %%%%%%%%%%%%%%%%%%%%%%%%%%%%%
\section{Weak solutions}\label{sec:weak}%\label{WeakSol}
In this section, we present our theory of weak solutions of \eqref{pParabolic}. The main results are that the Rayleigh quotient is monotone along the flow (Proposition \ref{RayleighGoDown}) and that ``bounded'' sequences of weak solutions are compact (Theorem \ref{CompactnessLem}). The interested reader could also consult \cite{HL14} where a similar theory is built for equation \eqref{eq:local}.
% Main identity/Various computations 
\begin{comment}An important identity for smooth solutions of \eqref{pParabolic} is 
\begin{equation}\label{diffEnergyIdentity}
\frac{d}{dt}\int_{\Omega}\frac{1}{p}|Dv(x,t)|^pdx=-\int_{\Omega}|v_t(x,t)|^pdx.
\end{equation}
This identity follows from direct computation.  Of course, integrating \eqref{diffEnergyIdentity} in time yields
\begin{equation}\label{EnergyIdentity}
\int^t_0\int_{\Omega}|v_t(x,s)|^pdxds+\int_{\Omega}\frac{1}{p}|Dv(x,t)|^pdx=\int_{\Omega}\frac{1}{p}|Dg(x)|^pdx
\end{equation}
for $t\ge 0$.  This resulting equality leads us to seek solutions defined as follows. 
% Define weak solutions 
\end{comment}

\begin{defn}%\label{WeakDefn}
Let $g\in W^{s,p}_0(\Omega)$. We say that $v$ is a {\it weak solution} of \eqref{pParabolic} if
\begin{equation}\label{NaturalSpace}
v\in L^\I([0,\infty); W^{s,p}_0(\Omega)), \quad v_t\in L^p(\Omega\times [0,\infty))
\end{equation}
and
\begin{equation}\label{WeakForm}
\int_{\Omega}|v_t(x,t)|^{p-2}v_t(x, t)\phi(x)dx+\int_{\R^n}\int_{\R^n}\J(v(x,t)-v(y,t))(\phi(x)-\phi(y))d\mu(x,y)=0\\
\end{equation}
for each $\phi\in W^{s,p}_0(\Omega)$ and for a.e. $t> 0$, and
\begin{equation}\label{InitialCond}
v(x,0)=g(x). 
\end{equation}
\end{defn}
\begin{rem}\label{rem:ac}
We note that if $v$ satisfies \eqref{NaturalSpace}, the $L^p$ norm of $v$ is absolutely continuous in time (one can for instance adapt the proof of Theorem 3 on page 287 of \cite{LCE}), so that it makes sense to assign values in $L^p(\Omega)$ at $t=0$, as in \eqref{InitialCond}.
\end{rem}
In the rest of this section, we derive various identities and estimates for weak solutions.
\begin{lem}\label{AbsPhi}
Assume $v$ is a weak solution of \eqref{pParabolic}. Then 
$$
[v(\cdot,t)]^p_{W^{s,p}(\R^n)}$$
is absolutely continuous in $t$ and
\begin{equation}\label{diffEnergyIdentity}
\frac{d}{dt} \frac1p [v(\cdot,t)]^p_{W^{s,p}(\R^n)} = -\int_\Omega |v_t(x,t)|^p dx
\end{equation}
holds for almost every $t>0$. 
\end{lem}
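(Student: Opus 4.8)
The plan is to use the weak formulation \eqref{WeakForm} with the test function $\phi = v(\cdot, t)$ — which is legitimate since $v(\cdot, t) \in W^{s,p}_0(\Omega)$ for a.e. $t$ by \eqref{NaturalSpace} — and recognize the resulting nonlocal term as the time derivative of $\frac1p [v(\cdot,t)]^p_{W^{s,p}(\R^n)}$. First I would establish the absolute continuity claim. The map $t \mapsto [v(\cdot,t)]^p_{W^{s,p}(\R^n)}$ should be handled by a standard argument: writing $F(x,y,t) = |v(x,t)-v(y,t)|^p$, one has $\partial_t F = p\,\J(v(x,t)-v(y,t))(v_t(x,t)-v_t(y,t))$ in the distributional sense, and since $v \in L^\infty([0,\infty);W^{s,p}_0(\Omega))$ with $v_t \in L^p(\Omega\times[0,\infty))$, one can bound $\int_0^T \int\int |\partial_t F|\, d\mu(x,y)\,dt$ using Hölder's inequality in the form $|\J(a)(b)| = |a|^{p-1}|b| \le \frac{p-1}{p}|a|^p + \frac1p |b|^p$ together with the boundedness of the Gagliardo seminorm and control on the $L^p$ norm of $v_t$ in the nonlocal seminorm (here one uses that for $\phi$ supported in a bounded set, $[\phi]_{W^{s,p}}$ is controlled — this needs care since $v_t$ need only be in $L^p$, not $W^{s,p}$; see below). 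This gives absolute continuity of $t \mapsto [v(\cdot,t)]^p_{W^{s,p}(\R^n)}$ and the a.e. identity $\frac{d}{dt}\frac1p[v(\cdot,t)]^p_{W^{s,p}(\R^n)} = \int_{\R^n}\int_{\R^n}\J(v(x,t)-v(y,t))(v_t(x,t)-v_t(y,t))\,d\mu(x,y)$.

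Next I would combine this with \eqref{WeakForm}. Taking $\phi = v(\cdot,t)$ in \eqref{WeakForm} yields
$$
\int_\Omega |v_t(x,t)|^{p-2}v_t(x,t)v(x,t)\,dx + \int_{\R^n}\int_{\R^n}\J(v(x,t)-v(y,t))(v(x,t)-v(y,t))\,d\mu(x,y) = 0
$$
for a.e. $t$; that is, $\int_{\R^n}\int_{\R^n}\J(v(x,t)-v(y,t))(v(x,t)-v(y,t))\,d\mu(x,y) = -\int_\Omega \J(v_t(x,t))v(x,t)\,dx$. I would like to instead test with $\phi = v_t(\cdot,t)$ to directly produce $-\int_\Omega|v_t|^p\,dx$ on the left and the nonlocal term above on the right; the issue is that $v_t(\cdot,t)$ need not lie in $W^{s,p}_0(\Omega)$. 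So the cleaner route is: from the absolute continuity step we know $\frac{d}{dt}\frac1p[v(\cdot,t)]^p_{W^{s,p}(\R^n)}$ equals the nonlocal pairing of $\J(v(x,t)-v(y,t))$ against $(v_t(x,t)-v_t(y,t))$, and I must argue this pairing equals $-\int_\Omega|v_t(x,t)|^p\,dx$. One way: approximate $v_t(\cdot,t)$ in $L^p(\Omega)$ by functions $\psi_k \in W^{s,p}_0(\Omega)$, plug $\psi_k$ into \eqref{WeakForm}, and pass to the limit — the first term converges since $v_t(\cdot,t) \in L^p$ and $\J(v_t(\cdot,t)) \in L^{p'}$, while for the nonlocal term one uses that $\J(v(x,t)-v(y,t)) \in L^{p'}(d\mu)$ (because $[v(\cdot,t)]_{W^{s,p}} < \infty$) paired against $(\psi_k(x)-\psi_k(y)) \to (v_t(x,t)-v_t(y,t))$ in $L^p(d\mu)$; but this last convergence again requires $v_t(\cdot,t) \in W^{s,p}$, which we do not have. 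The robust fix is a difference-quotient argument: for the weak solution, $\frac1p[v(\cdot,t+h)]^p - \frac1p[v(\cdot,t)]^p = \int_t^{t+h}(\text{nonlocal pairing})\,d\tau$, and separately one shows via \eqref{WeakForm} tested against $v(\cdot,\tau)$ and a telescoping/monotonicity estimate that this difference also equals $-\int_t^{t+h}\int_\Omega|v_\tau|^p\,dx\,d\tau + o(h)$.

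The cleanest self-contained argument, and the one I would write up: for a.e. $t$, use the fundamental theorem of calculus on $\tau \mapsto v(x,\tau)-v(y,\tau)$ (absolutely continuous in $\tau$ into $L^p(d\mu)$, by the integrability of the distributional $\tau$-derivative established above) to write $[v(\cdot,t)]^p_{W^{s,p}} - [v(\cdot,0)]^p_{W^{s,p}} = p\int_0^t \int\int \J(v(x,\tau)-v(y,\tau))(v_\tau(x,\tau)-v_\tau(y,\tau))\,d\mu\,d\tau$; then show the inner integral equals $-\int_\Omega |v_\tau|^p\,dx$ for a.e. $\tau$ by testing \eqref{WeakForm} at time $\tau$ with a sequence $\phi_k \to v_\tau(\cdot,\tau)$ in $W^{s,p}_0(\Omega)$ — which is available after noting that $v_\tau(\cdot,\tau)$, being in $L^p(\Omega)$, may need an extra mollification/truncation argument combined with the structure of \eqref{WeakForm}; in practice Hynd–Lindgren handle exactly this in \cite{HL14} for the local case, so I expect the present proof to follow the same template. \textbf{The main obstacle} is precisely this regularity mismatch: the weak formulation only allows test functions in $W^{s,p}_0(\Omega)$, but the natural choice $\phi = v_t(\cdot,t)$ lives only in $L^p(\Omega)$, so justifying $\int\int \J(v(x,t)-v(y,t))(v_t(x,t)-v_t(y,t))\,d\mu = -\int_\Omega|v_t(x,t)|^p\,dx$ requires an approximation argument that carefully exploits the duality $L^{p'}(d\mu)$–$L^p(d\mu)$ and the finiteness of the Gagliardo seminorm of $v(\cdot,t)$, rather than a naive substitution.
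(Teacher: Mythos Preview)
Your proposal correctly identifies the central difficulty but does not close it. Every route you sketch ultimately needs $v_t(\cdot,t)$ to carry some $W^{s,p}$ regularity --- either to bound $\int_0^T\iint |\partial_t F|\,d\mu\,dt$ in your absolute-continuity step, or to make sense of the pairing $\iint \J(v(x,\tau)-v(y,\tau))(v_\tau(x,\tau)-v_\tau(y,\tau))\,d\mu$, or to pass to the limit after approximating $v_t$ by $W^{s,p}_0$ functions inside the nonlocal term. But the hypotheses give only $v_t\in L^p(\Omega\times[0,\infty))$, and there is no inequality producing $[v_t]_{W^{s,p}}$ from $\|v_t\|_{L^p}$. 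The approximation arguments you outline therefore do not go through, and you end up deferring the point to \cite{HL14} without saying what actually happens there.

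The paper sidesteps the issue entirely via a convex-analysis chain rule. One regards
\[
\Phi(w)=\begin{cases}\tfrac1p[w]^p_{W^{s,p}(\R^n)}, & w\in W^{s,p}_0(\Omega),\\ +\infty, & \text{otherwise},\end{cases}
\]
as a convex, proper, lower-semicontinuous functional on $L^p(\Omega)$. The weak formulation \eqref{WeakForm} says precisely that $-\J(v_t(\cdot,t))\in\partial\Phi(v(\cdot,t))$, where the subdifferential is taken with respect to the $L^p$--$L^{p'}$ duality, \emph{not} the $W^{s,p}$--$W^{-s,p'}$ one. Since $t\mapsto v(\cdot,t)$ is absolutely continuous into $L^p(\Omega)$ and $|\partial\Phi(v)|\cdot|v_t|=|v_t|^p\in L^1$, the abstract chain rule (Remark~1.4.6 in Ambrosio--Gigli--Savar\'e) yields both the absolute continuity of $t\mapsto\Phi(v(\cdot,t))$ and the identity $\frac{d}{dt}\Phi(v(\cdot,t))=\langle -\J(v_t),\,v_t\rangle_{L^{p'},L^p}=-\int_\Omega|v_t|^p\,dx$. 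By placing $\Phi$ on $L^p$ rather than on $W^{s,p}_0$, the subdifferential pairing involves only $L^p$ quantities, and the regularity mismatch you flagged simply disappears.
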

\begin{proof} Define  
$$
\Phi(w):=
\begin{cases}
\frac1p [w]^p_{W^{s,p}(\R^n)},& \quad w\in W^{s,p}_0(\Omega)\\
+\infty, &\text{otherwise}
\end{cases}
$$
for each $w\in L^p(\Omega)$. Then $\Phi$ is convex, proper, and lower-semicontinuous. In addition, \eqref{WeakForm} implies
$$
\partial\Phi(v(\cdot,t))=\{-|v_t(\cdot, t)|^{p-2}v_t(\cdot, t)\}
$$
for almost every $t>0$. Since $t\mapsto v(\cdot,t)$ is absolutely continuous in $L^p(\Omega)$ (see Remark \ref{rem:ac}) and since $|\partial\Phi(v)|\cdot |v_t|\in L^1(\Omega\times [0,T))$ for any $T>0$, Remark 1.4.6 in \cite{AGS} implies that $t\mapsto \Phi(v(\cdot,t))$ is absolutely continuous and that identity \eqref{diffEnergyIdentity} holds for a.e. $t>0$.
\end{proof}
\begin{lem}%\label{LemmaINeq}
Assume $v$ is a weak solution of \eqref{pParabolic}. Then
\begin{equation}\label{DiffIneqmu}
\frac{d}{dt}[v(\cdot,t)]^p_{W^{s,p}(\R^n)} \le - p \mu_{s,p} [v(\cdot,t)]^p_{W^{s,p}(\R^n)},
\end{equation}
for a.e. $t>0$, and
\begin{equation}\label{ExpDecay}
[v(\cdot,t)]^p_{W^{s,p}(\R^n)}\le e^{-(p\mu_{s,p})t}[g]_{W^{s,p}(\R^n)}
\end{equation}
for each $t>0$. 

\end{lem}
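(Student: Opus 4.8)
The plan is to combine Lemma~\ref{AbsPhi} with a test-function argument and the Poincar\'e inequality built into \eqref{rayleigh}. By \eqref{diffEnergyIdentity} it suffices to prove the pointwise-in-time lower bound $\int_\Omega |v_t(x,t)|^p dx \ge \mu_{s,p}[v(\cdot,t)]^p_{W^{s,p}(\R^n)}$ for a.e.\ $t>0$; once this is in hand, \eqref{DiffIneqmu} is immediate and \eqref{ExpDecay} follows by Gr\"onwall's inequality.

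To get the lower bound, fix a time $t$ at which \eqref{WeakForm} holds and test with $\phi = v(\cdot,t) \in W^{s,p}_0(\Omega)$. Since $\J(a)\,a = |a|^p$, the double integral becomes exactly $[v(\cdot,t)]^p_{W^{s,p}(\R^n)}$, so
$$
[v(\cdot,t)]^p_{W^{s,p}(\R^n)} = -\int_\Omega |v_t(x,t)|^{p-2}v_t(x,t)\,v(x,t)\,dx \le \int_\Omega |v_t(x,t)|^{p-1}|v(x,t)|\,dx .
$$
Applying H\"older's inequality with exponents $p/(p-1)$ and $p$, and then the Poincar\'e inequality $\int_\Omega |v(\cdot,t)|^p dx \le \lambda_{s,p}^{-1}[v(\cdot,t)]^p_{W^{s,p}(\R^n)}$ coming from \eqref{rayleigh}, gives
$$
[v(\cdot,t)]^p_{W^{s,p}(\R^n)} \le \left(\int_\Omega |v_t(x,t)|^p dx\right)^{\frac{p-1}{p}} \left(\lambda_{s,p}^{-1}[v(\cdot,t)]^p_{W^{s,p}(\R^n)}\right)^{\frac1p} .
$$
If $[v(\cdot,t)]_{W^{s,p}(\R^n)} = 0$ the desired bound is trivial; otherwise divide through by $\left([v(\cdot,t)]^p_{W^{s,p}(\R^n)}\right)^{1/p}$, raise to the power $p/(p-1)$, and use $\lambda_{s,p}^{1/(p-1)} = \mu_{s,p}$ to conclude $\int_\Omega |v_t(x,t)|^p dx \ge \mu_{s,p}[v(\cdot,t)]^p_{W^{s,p}(\R^n)}$.

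Combining this with \eqref{diffEnergyIdentity} yields \eqref{DiffIneqmu}. For \eqref{ExpDecay}, observe that by Lemma~\ref{AbsPhi} the function $y(t) := [v(\cdot,t)]^p_{W^{s,p}(\R^n)}$ is absolutely continuous and satisfies $y'(t) \le -p\mu_{s,p}\,y(t)$ for a.e.\ $t>0$; hence $t \mapsto e^{p\mu_{s,p}t}y(t)$ is non-increasing, and evaluating at $t=0$ via \eqref{InitialCond} gives $y(t) \le e^{-p\mu_{s,p}t}[g]^p_{W^{s,p}(\R^n)}$. There is no genuine obstacle in this argument: the only points needing care --- admissibility of $v(\cdot,t)$ as a test function, and the validity of the manipulations for a.e.\ $t$ --- are already supplied by the definition of weak solution together with Lemma~\ref{AbsPhi}.
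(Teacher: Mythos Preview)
Your proof is correct and follows essentially the same route as the paper: test \eqref{WeakForm} with $\phi=v(\cdot,t)$, apply H\"older and the Poincar\'e inequality from \eqref{rayleigh} to obtain $\int_\Omega |v_t|^p\,dx \ge \mu_{s,p}[v(\cdot,t)]^p_{W^{s,p}(\R^n)}$, combine with \eqref{diffEnergyIdentity}, and finish with Gr\"onwall. The only cosmetic difference is that you explicitly separate out the case $[v(\cdot,t)]_{W^{s,p}(\R^n)}=0$ before dividing, which the paper leaves implicit.
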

\begin{proof} By Lemma \ref{AbsPhi}, $v(\cdot,t)$ is an admissible test function in \eqref{WeakForm}, which yields
\begin{align}\label{HolderDerBound}
[v(\cdot,t)]^p_{W^{s,p}(\R^n)}&=\int_{\R^n}\int_{\R^n}\J(v(x,t)-v(y,t))(v(x,t)-v(y,t))d \mu(x,y) \nonumber \\
&=-\int_{\Omega}|v_t(x,t)|^{p-2}v_t(x,t)\cdot v(x,t) dx \nonumber \\
&\le \left(\int_{\Omega}|v_t(x,t)|^pdx\right)^{1-1/p}\left(\int_{\Omega}|v(x,t)|^pdx\right)^{1/p}  \\
&\le \lambda_{s,p}^{-1/p}\left(\int_{\Omega}|v_t(x,t)|^pdx\right)^{1-1/p}[v(\cdot,t)]_{W^{s,p}(\R^n)}.\nonumber
\end{align}
Hence, 
\begin{equation}\label{Dvvtbound}
[v(\cdot,t)]^p_{W^{s,p}(\R^n)}\le \frac{1}{\mu_{s,p}}\int_{\Omega}|v_t(x,t)|^pdx
\end{equation}
Identity \eqref{diffEnergyIdentity} together with \eqref{Dvvtbound} implies \eqref{DiffIneqmu}. From Gr\"{o}nwall's inequality we can now deduce inequality \eqref{ExpDecay}.
\end{proof}

\begin{cor}\label{GradUNon} Let $v$ be a weak solution of \eqref{pParabolic}. Then the function
$$
 e^{(\mu_{s,p} p)t}[v(\cdot,t)]^p_{W^{s,p}(\R^n)}
$$
is nonincreasing in $t$ and 
\begin{equation}\label{eq:needed}
\frac1p\frac{d}{dt}\left( e^{(\mu_{s,p}p)t}[v(\cdot,t)]^p_{W^{s,p}(\R^n)}\right)= e^{(\mu_{s,p}p)t}\left(\mu_{s,p} [v(\cdot,t)]^p_{W^{s,p}(\R^n)}- \int_{\Omega}|v_t(x,t)|^pdx\right)\leq 0,
\end{equation}
for a.e. $t\ge 0.$ 
\end{cor}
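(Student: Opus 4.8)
The plan is to introduce $E(t):=e^{(\mu_{s,p}p)t}[v(\cdot,t)]^p_{W^{s,p}(\R^n)}$ and differentiate it directly, reading off both the claimed identity and the sign from the results already established. First I would observe that, by Lemma~\ref{AbsPhi}, the map $t\mapsto[v(\cdot,t)]^p_{W^{s,p}(\R^n)}$ is absolutely continuous on every compact subinterval of $[0,\infty)$; since $t\mapsto e^{(\mu_{s,p}p)t}$ is locally Lipschitz, the product $E$ is absolutely continuous on compact intervals. Consequently $E$ is differentiable a.e.\ and recovers itself as the integral of its derivative, so that it will suffice to prove $E'(t)\le 0$ for a.e.\ $t$ in order to conclude that $E$ is nonincreasing.

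Next I would apply the product rule at a.e.\ $t$ where both factors are differentiable and substitute identity \eqref{diffEnergyIdentity}, namely $\tfrac{d}{dt}[v(\cdot,t)]^p_{W^{s,p}(\R^n)}=-p\int_\Omega|v_t(x,t)|^p\,dx$. This gives
$$
\frac{d}{dt}E(t)=\mu_{s,p}p\,e^{(\mu_{s,p}p)t}[v(\cdot,t)]^p_{W^{s,p}(\R^n)}-p\,e^{(\mu_{s,p}p)t}\int_\Omega|v_t(x,t)|^p\,dx,
$$
and dividing by $p$ yields precisely the asserted identity \eqref{eq:needed}.

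Finally, to obtain the sign I would invoke inequality \eqref{Dvvtbound}, which asserts $\mu_{s,p}[v(\cdot,t)]^p_{W^{s,p}(\R^n)}\le\int_\Omega|v_t(x,t)|^p\,dx$ for a.e.\ $t$; inserting this into the expression above shows $E'(t)\le 0$ a.e., and hence $E$ is nonincreasing by the absolute continuity noted in the first step. There is no real obstacle in this argument: it is a bookkeeping exercise combining Lemma~\ref{AbsPhi} with \eqref{Dvvtbound}, and the only point demanding a bit of care is the passage from ``$E'\le 0$ a.e.'' to ``$E$ nonincreasing'', which is exactly what absolute continuity provides.
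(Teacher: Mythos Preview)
Your argument is correct and follows essentially the same route as the paper: the identity comes from \eqref{diffEnergyIdentity} via the product rule, and the sign from \eqref{Dvvtbound} (the paper phrases the latter as invoking \eqref{DiffIneqmu}, which is just \eqref{Dvvtbound} combined with \eqref{diffEnergyIdentity}). Your explicit mention of absolute continuity to pass from $E'\le 0$ a.e.\ to monotonicity is a nice touch that the paper leaves implicit.
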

\begin{proof}
The monotonicity is a consequence of \eqref{DiffIneqmu}. The identity \eqref{eq:needed} follows from \eqref{diffEnergyIdentity}.
\end{proof}

%prop 
\begin{prop}\label{RayleighGoDown} Assume that $v$ is a weak solution of \eqref{pParabolic} such that $v(\cdot, t)\neq 0\in L^p(\Omega)$ for each 
$t\ge 0$. Then the Rayleigh quotient 
$$
\frac{[v(\cdot,t)]^{p}_{W^{s,p}(\R^n)}}{\displaystyle\int_{\Omega}|v(x,t)|^pdx }
$$
is nonincreasing in $t$.
\end{prop}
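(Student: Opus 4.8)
The plan is to differentiate the Rayleigh quotient directly, show its time derivative is nonpositive almost everywhere, and then promote this to monotonicity via absolute continuity. Write $N(t):=[v(\cdot,t)]^p_{W^{s,p}(\R^n)}$ and $D(t):=\int_\Omega|v(x,t)|^p\,dx$. By Lemma~\ref{AbsPhi}, $N$ is absolutely continuous on $[0,\infty)$ and $N'(t)=-p\int_\Omega|v_t(x,t)|^p\,dx$ for a.e.\ $t$. By Remark~\ref{rem:ac}, together with the chain rule for absolutely continuous $L^p(\Omega)$-valued curves used there (with $\frac1p\|\cdot\|_{L^p(\Omega)}^p$ playing the role of $\Phi$ in the proof of Lemma~\ref{AbsPhi}), $D$ is absolutely continuous and $D'(t)=p\int_\Omega\J(v(x,t))\,v_t(x,t)\,dx$ for a.e.\ $t$. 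Since $D$ is continuous and, by hypothesis, strictly positive on $[0,\infty)$, it is bounded below by a positive constant on each compact subinterval; hence $1/D$, and therefore the Rayleigh quotient $N/D$, is absolutely continuous on every $[0,T]$. It thus suffices to show $(N/D)'(t)\le 0$ for a.e.\ $t>0$.

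The second ingredient is the identity obtained by testing the equation against the solution itself. Choosing $\phi=v(\cdot,t)$ in \eqref{WeakForm}, which is admissible by Lemma~\ref{AbsPhi}, yields (as recorded in \eqref{HolderDerBound})
$$
N(t)=-\int_\Omega\J(v_t(x,t))\,v(x,t)\,dx.
$$
Inserting this together with the formulas for $N'$ and $D'$ into $(N/D)'=(N'D-ND')/D^2$, the numerator becomes
$$
N'(t)D(t)-N(t)D'(t)=p\left[\left(\int_\Omega\J(v_t)\,v\,dx\right)\left(\int_\Omega\J(v)\,v_t\,dx\right)-\left(\int_\Omega|v_t|^p\,dx\right)\left(\int_\Omega|v|^p\,dx\right)\right],
$$
with every integrand evaluated at time $t$. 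So everything reduces to showing that the bracketed quantity is nonpositive.

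This is a consequence of two applications of Hölder's inequality. Since $|\J(v_t)\,v|=|v_t|^{p-1}|v|$ and $|\J(v)\,v_t|=|v|^{p-1}|v_t|$, Hölder's inequality with exponents $\tfrac{p}{p-1}$ and $p$ gives
$$
\left|\int_\Omega\J(v_t)\,v\,dx\right|\le\left(\int_\Omega|v_t|^p\,dx\right)^{\frac{p-1}{p}}\left(\int_\Omega|v|^p\,dx\right)^{\frac1p},\qquad
\left|\int_\Omega\J(v)\,v_t\,dx\right|\le\left(\int_\Omega|v|^p\,dx\right)^{\frac{p-1}{p}}\left(\int_\Omega|v_t|^p\,dx\right)^{\frac1p}.
$$
Multiplying these two estimates, the powers of $\int_\Omega|v_t|^p$ and of $\int_\Omega|v|^p$ each sum to $1$, so the product of the two integrals on the left is bounded above by $\bigl(\int_\Omega|v_t|^p\,dx\bigr)\bigl(\int_\Omega|v|^p\,dx\bigr)$; hence the bracket above is $\le 0$. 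Therefore $(N/D)'(t)\le 0$ for a.e.\ $t$, and since $N/D$ is locally absolutely continuous it is nonincreasing in $t$.

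The only genuine subtlety is the bookkeeping of absolute continuity: to pass from the almost-everywhere sign of the derivative to actual monotonicity one needs $N/D$ to be (locally) absolutely continuous, which in turn requires $D$ to be continuous and bounded away from zero on compact time intervals — and this is precisely where the standing hypothesis $v(\cdot,t)\neq0$ for every $t\ge0$ is used. The analytic heart of the proof is just the two Hölder estimates above.
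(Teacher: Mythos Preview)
Your proof is correct and follows essentially the same route as the paper: differentiate the Rayleigh quotient using Lemma~\ref{AbsPhi} for the numerator and the chain rule for the denominator, replace $N(t)$ by $-\int_\Omega\J(v_t)v\,dx$ via the weak formulation, and close with two applications of H\"older's inequality. You are slightly more explicit than the paper about the absolute continuity needed to pass from the a.e.\ sign of the derivative to monotonicity, but the analytic content is the same.
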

\begin{proof}
By \eqref{NaturalSpace}, 
\begin{equation}\label{evanseq}
\frac{d}{dt}\int_\Omega\frac{1}{p}|v(x,t)|^pdx=\int_\Omega|v(x,t)|^{p-2}v(x,t)v_t(x,t)dx
\end{equation}
for a.e. $t>0$. Suppressing the $(x,t)$-dependence, we compute, using \eqref{diffEnergyIdentity} in Lemma \ref{AbsPhi} and \eqref{evanseq}, to find
\begin{align}\label{RayleighComp}
\frac{d}{dt}\frac{[v]^{p}_{W^{s,p}(\R^n)} }{\int_{\Omega}|v|^pdx } & = 
-p\frac{\int_{\Omega}|v_t|^pdx }{\int_{\Omega}|v|^pdx } - p \frac{[v]^{p}_{W^{s,p}(\R^n)} }{\left(\int_{\Omega}|v|^pdx\right)^2} \int_{\Omega}|v|^{p-2} vv_tdx \nonumber \\
&=\frac{p}{\left(\int_{\Omega}|v|^pdx\right)^2}\left\{[v]^{p}_{W^{s,p}(\R^n)}\int_{\Omega}|v|^{p-2} v(-v_t)dx -\int_{\Omega}|v|^pdx \int_{\Omega}|v_t|^pdx \right\}
\end{align}
for a.e. $t>0$.  By H\"{o}lder's inequality 
$$
\int_{\Omega}|v|^{p-2} v(-v_t)dx\le \left(\int_{\Omega}|v|^{p}dx\right)^{1-1/p} \left(\int_{\Omega}|v_t|^{p}dx\right)^{1/p},
$$
which together with \eqref{HolderDerBound} gives
$$
[v]^{p}_{W^{s,p}(\R^n)}\int_{\Omega}|v|^{p-2} v(-v_t)dx \le \int_{\Omega}|v|^pdx \int_{\Omega}|v_t|^pdx.
$$ 
Inserted into \eqref{RayleighComp}, this yields
$$
\frac{d}{dt} \frac{[v]^{p}_{W^{s,p}(\R^n)} }{\displaystyle \int_{\Omega}|v|^pdx }\le 0.
$$
\end{proof}
As a corollary we obtain that any weak solution with a ground state as initial data can be written explicitly. Since the proof is exactly the same as the proof of the corresponding result in \cite{HL14}, Corollary 2.5, we have chosen to omit it.

\begin{cor}\label{cor:icgroundstate} Suppose that $v$ is a weak solution of \eqref{pParabolic} and that $g$ is a ground state of $\laps$. Then 
$$
v(x,t)=e^{-\mu_{s,p}t} g(x).
$$
\end{cor}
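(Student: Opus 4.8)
The plan is to exploit the rigidity forced by the monotonicity of the Rayleigh quotient (Proposition \ref{RayleighGoDown}) together with the uniqueness of ground states. Since $g$ is a ground state, $[g]^p_{W^{s,p}(\R^n)}=\lambda_{s,p}\int_\Omega|g|^pdx$, the minimal value of the Rayleigh quotient on $W^{s,p}_0(\Omega)\setminus\{0\}$. Set
$$
T^\ast:=\sup\big\{T>0:\ v(\cdot,t)\neq 0\ \text{in}\ L^p(\Omega)\ \text{for all}\ t\in[0,T]\big\}.
$$
Because $v(\cdot,0)=g\neq 0$ and $t\mapsto v(\cdot,t)$ is continuous in $L^p(\Omega)$ (which follows from $v_t\in L^p(\Omega\times[0,\infty))$; cf.\ Remark \ref{rem:ac}), we have $T^\ast>0$. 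On $[0,T^\ast)$ Proposition \ref{RayleighGoDown} applies, so $[v(\cdot,t)]^p_{W^{s,p}(\R^n)}/\int_\Omega|v(x,t)|^pdx$ is nonincreasing in $t$; it equals $\lambda_{s,p}$ at $t=0$ and is $\geq\lambda_{s,p}$ for every $t$ by the definition of $\lambda_{s,p}$, hence it is identically $\lambda_{s,p}$ on $[0,T^\ast)$. Thus each $v(\cdot,t)$, $t\in[0,T^\ast)$, minimizes the Rayleigh quotient, and by the uniqueness of such minimizers up to a multiplicative constant (Theorem 14 in \cite{LL14} together with Corollary 3.14 in \cite{BP14}) there is a function $a\colon[0,T^\ast)\to\R\setminus\{0\}$ with $a(0)=1$ and $v(\cdot,t)=a(t)g$.

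Next I would substitute $v(\cdot,t)=a(t)g$ into the weak formulation \eqref{WeakForm}. Since $v_t\in L^p(\Omega\times[0,\infty))$ the scalar $a$ is absolutely continuous and $v_t(\cdot,t)=a'(t)g$; using the pointwise identity $\mathcal{J}_p(cg)=\mathcal{J}_p(c)\mathcal{J}_p(g)$ for $c\in\R$ (the $(p-1)$-homogeneity of $\mathcal{J}_p$) together with the weak eigenvalue equation $\int_{\R^n}\int_{\R^n}\mathcal{J}_p(g(x)-g(y))(\phi(x)-\phi(y))d\mu(x,y)=\lambda_{s,p}\int_\Omega\mathcal{J}_p(g)\phi\,dx$, equation \eqref{WeakForm} with test function $\phi=g$ collapses to
$$
\left(\mathcal{J}_p(a'(t))+\lambda_{s,p}\mathcal{J}_p(a(t))\right)\int_\Omega|g(x)|^p dx=0
$$
for a.e.\ $t\in[0,T^\ast)$, whence $\mathcal{J}_p(a')=-\lambda_{s,p}\mathcal{J}_p(a)$. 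Since $\mathcal{J}_p$ is an increasing bijection of $\R$ and $\mathcal{J}_p(-\mu_{s,p}a)=-\mu_{s,p}^{p-1}\mathcal{J}_p(a)=-\lambda_{s,p}\mathcal{J}_p(a)$, this is equivalent to the linear ODE $a'=-\mu_{s,p}a$; with $a(0)=1$ it gives $a(t)=e^{-\mu_{s,p}t}$, so $v(x,t)=e^{-\mu_{s,p}t}g(x)$ on $[0,T^\ast)$.

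Finally I would rule out $T^\ast<\infty$: if $T^\ast<\infty$, then by continuity of $t\mapsto v(\cdot,t)$ in $L^p(\Omega)$ we get $v(\cdot,T^\ast)=\lim_{t\to T^{\ast-}}e^{-\mu_{s,p}t}g=e^{-\mu_{s,p}T^\ast}g\neq 0$, and hence $v(\cdot,t)\neq 0$ on a neighborhood of $T^\ast$, contradicting the maximality of $T^\ast$. Therefore $T^\ast=\infty$ and $v(x,t)=e^{-\mu_{s,p}t}g(x)$ for all $t\geq 0$. I expect the only delicate points to be the appeal to the external uniqueness-of-ground-states result (which upgrades ``$v(\cdot,t)$ is an extremal'' to ``$v(\cdot,t)$ lies on the ray $\R g$'') and the short continuation argument guaranteeing that $v$ never vanishes; once these are in place the PDE degenerates to the scalar ODE above and the conclusion is immediate. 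One could instead try to verify directly that $e^{-\mu_{s,p}t}g$ is a weak solution, but since uniqueness of weak solutions is not available, the Rayleigh-quotient argument seems to be the natural route.
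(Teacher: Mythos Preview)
Your proof is correct. The paper does not actually prove this corollary but defers to Corollary~2.5 in \cite{HL14}, so there is no in-paper argument to compare against; your approach---freezing the Rayleigh quotient at $\lambda_{s,p}$ via Proposition~\ref{RayleighGoDown}, invoking the simplicity of ground states to write $v(\cdot,t)=a(t)g$, and collapsing \eqref{WeakForm} to the scalar ODE $a'=-\mu_{s,p}a$, together with the continuation argument ruling out $T^\ast<\infty$---is precisely the route the surrounding tools in the paper are set up for and almost certainly coincides with the argument in \cite{HL14}.
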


The following compactness result is the key both to the long time behavior and to the construction of weak solutions as we will see. The proof is based on the compact embedding of $W_0^{s,p}(\Omega)$ into $L^p(\Omega)$ and it is fairly similar to the proof of Theorem 2.6 in \cite{HL14}.
% Compactness
\begin{thm}\label{CompactnessLem}
Assume $\{g^k\}_{k\in \N}\in W_0^{s,p}(\Omega)$ is uniformly bounded in $W_0^{s,p}(\Omega)$ and that $v^k$ is a weak solution of \eqref{pParabolic} with $v^k(x,0)=g^k(x)$. Then there is a subsequence $\{v^{k_j}\}_{j\in \N}\subset \{v^k\}_{k\in \N}$ and $v$ satisfying 
\eqref{NaturalSpace} such that 
\begin{equation}\label{FirstConv}
v^{k_j}\rightarrow v\quad \text{in}\quad
\begin{cases}
C([0,T]; L^p(\Omega)), \quad \text{ for all } \quad T>0\\
L^r_{\textup{loc}}([0,\infty); W^{s,p}(\R^n)), \quad \text{ for all }\quad 1\le r<\infty
\end{cases}
\end{equation}
and
\begin{equation}\label{SecondConv}
v_t^{k_j}\rightarrow v_t\quad \text{in}\quad L^p_{\textup{loc}}([0,\infty); L^p(\Omega))
\end{equation}
as $j\rightarrow \infty$.  Moreover, $v$ is a weak solution of \eqref{pParabolic} where $g$ is a weak limit of $\{g^{k_j}\}_{k\in \N}$ in $W_0^{s,p}(\Omega)$. 
\end{thm}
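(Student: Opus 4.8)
The plan is to extract compactness from the a priori bounds established earlier in this section, then pass to the limit in the weak formulation \eqref{WeakForm}. First I would record the uniform estimates: since $\{g^k\}$ is bounded in $W_0^{s,p}(\Omega)$, Lemma \ref{AbsPhi} and inequality \eqref{ExpDecay} give that $[v^k(\cdot,t)]^p_{W^{s,p}(\R^n)}$ is bounded uniformly in $k$ and $t$, so $\{v^k\}$ is bounded in $L^\infty([0,\infty);W^{s,p}_0(\Omega))$. Next, integrating \eqref{diffEnergyIdentity} in time (the doubly-nonlinear energy identity) yields
$$
\int_0^T\int_\Omega |v^k_t|^p\,dx\,dt + \tfrac1p[v^k(\cdot,T)]^p_{W^{s,p}(\R^n)} = \tfrac1p[g^k]^p_{W^{s,p}(\R^n)},
$$
so $\{v^k_t\}$ is bounded in $L^p(\Omega\times[0,\infty))$. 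In particular, using Hölder in time, $\{v^k\}$ is bounded in $C^{0,1/p'}([0,T];L^p(\Omega))$, hence equicontinuous into $L^p(\Omega)$.

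Now I would apply an Aubin--Lions--Simon type argument. The embedding $W^{s,p}_0(\Omega)\hookrightarrow L^p(\Omega)$ is compact (cited in Section \ref{sec:not} via \cite{BLP14}, \cite{NPV11}), so for each fixed $t$ the set $\{v^k(\cdot,t)\}$ is precompact in $L^p(\Omega)$; combined with the equicontinuity in $t$, the Arzel\`a--Ascoli theorem gives a subsequence $v^{k_j}\to v$ in $C([0,T];L^p(\Omega))$ for every $T$ (a diagonal argument over $T=1,2,\dots$). The $L^\infty([0,\infty);W^{s,p}_0)$ bound is preserved in the limit (weak-$*$ lower semicontinuity of the Gagliardo seminorm), giving the first line of \eqref{NaturalSpace}; the $L^p$ bound on $v^{k_j}_t$ passes to a weak limit which must equal $v_t$, giving the second line of \eqref{NaturalSpace} and weak convergence $v^{k_j}_t\rightharpoonup v_t$ in $L^p(\Omega\times[0,T))$. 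The initial condition $v(x,0)=g(x)$ follows from $C([0,T];L^p(\Omega))$ convergence at $t=0$ together with the fact that $g^{k_j}\rightharpoonup g$ in $W_0^{s,p}(\Omega)$ (a further subsequence), and weak convergence in $W^{s,p}_0$ implies strong convergence in $L^p$ by the compact embedding, so the two limits of $g^{k_j}$ agree.

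The remaining and most delicate points are the strong convergences claimed in \eqref{FirstConv}--\eqref{SecondConv} and passing to the limit in the nonlinear diffusion term. For the strong $L^p$-in-time convergence of the gradients $v^{k_j}\to v$ in $L^r_{\mathrm{loc}}([0,\infty);W^{s,p}(\R^n))$, I would exploit the energy identity: test \eqref{WeakForm} for $v^{k_j}$ with $v^{k_j}(\cdot,t)$ itself and for the limit problem with $v(\cdot,t)$, and show $\int_0^T[v^{k_j}(\cdot,t)]^p\,dt\to\int_0^T[v(\cdot,t)]^p\,dt$ using the energy identity $\tfrac1p[v^{k_j}(\cdot,T)]^p = \tfrac1p[g^{k_j}]^p - \int_0^T\int_\Omega|v^{k_j}_t|^p$ — here one needs $[g^{k_j}]^p\to[g]^p$, which requires improving the weak convergence $g^{k_j}\rightharpoonup g$; this can be arranged since the construction of weak solutions (Section \ref{sec:weak}, end) produces $g^k\to g$ strongly when needed, or one notes that $v^{k_j}(\cdot,t)\rightharpoonup v(\cdot,t)$ in $W^{s,p}_0$ with convergent seminorms forces strong convergence by uniform convexity of the seminorm. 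Convergence of seminorms plus weak convergence in the uniformly convex space $W^{s,p}_0(\Omega)$ then upgrades to strong convergence for a.e. $t$, and a dominated-convergence argument (the seminorms are uniformly bounded) gives the $L^r_{\mathrm{loc}}$-in-time statement. The strong convergence \eqref{SecondConv} of the time derivatives likewise follows from convergence of the dissipation integrals $\int_0^T\int_\Omega|v^{k_j}_t|^p \to \int_0^T\int_\Omega|v_t|^p$ combined with weak convergence and uniform convexity of $L^p$. Finally, with $v^{k_j}\to v$ strongly in $W^{s,p}(\R^n)$ for a.e. $t$, the vector field $\J(v^{k_j}(x,t)-v^{k_j}(y,t))$ converges in $L^{p'}(d\mu)$, so the diffusion term in \eqref{WeakForm} passes to the limit for each test $\phi\in W^{s,p}_0(\Omega)$, while the first term passes by weak convergence of $|v^{k_j}_t|^{p-2}v^{k_j}_t$ in $L^{p'}$; hence $v$ is a weak solution. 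The main obstacle is precisely this upgrade from weak to strong convergence: it hinges on the energy identity giving convergence of the seminorms, which in turn needs control of $[g^{k_j}]$, so the argument must be coordinated with how the data $g^k$ are chosen (either strongly convergent by construction, or the seminorm-convergence is obtained a posteriori from the flow's dissipation structure).
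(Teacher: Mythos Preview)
Your outline captures the broad strategy (uniform energy bounds, Aubin--Lions--Simon compactness, upgrade to strong convergence via norm convergence in a uniformly convex space), but there is a genuine gap in how you arrange the upgrade step, and you have correctly identified the symptom without diagnosing it.

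You propose to obtain $\int_0^T[v^{k_j}(\cdot,t)]^p\,dt\to\int_0^T[v(\cdot,t)]^p\,dt$ from the integrated energy identity $\tfrac1p[v^{k_j}(\cdot,T)]^p=\tfrac1p[g^{k_j}]^p-\int_0^T\!\int_\Omega|v^{k_j}_t|^p$. This would require $[g^{k_j}]\to[g]$, which is \emph{not} assumed; the theorem only gives a bounded sequence, and the limit $g$ is merely a weak limit. Your two suggested fixes are circular: ``the construction gives strong convergence'' changes the hypotheses, and ``convergent seminorms force strong convergence'' is exactly what you are trying to prove. A second, related gap: you assert that the term $\int_\Omega|v^{k_j}_t|^{p-2}v^{k_j}_t\,\phi\,dx$ passes to the limit ``by weak convergence of $|v^{k_j}_t|^{p-2}v^{k_j}_t$ in $L^{p'}$'', but the weak limit $\xi$ of $\mathcal{J}_p(v^{k_j}_t)$ is not a priori $\mathcal{J}_p(v_t)$; this identification is the crux and must be proved.

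The paper resolves both issues by reordering the argument. First, without knowing $\xi=\mathcal{J}_p(v_t)$, it uses convexity of $w\mapsto\tfrac1p[w]^p_{W^{s,p}}$ (the subdifferential inequality, integrated in time and passed to the limit via Fatou plus the strong $L^p$ / weak $L^{p'}$ pairing) to show that $v$ already satisfies a weak equation with $\xi$ in place of $\mathcal{J}_p(v_t)$. Testing \emph{this} equation with $v$ gives $\int_{t_0}^{t_1}[v]^p\,dt=-\int_{t_0}^{t_1}\!\int_\Omega\xi\,v$, while testing the $v^{k_j}$ equation with $v^{k_j}$ gives $\int_{t_0}^{t_1}[v^{k_j}]^p\,dt=-\int_{t_0}^{t_1}\!\int_\Omega\mathcal{J}_p(v^{k_j}_t)\,v^{k_j}$; the right-hand sides match in the limit because $\mathcal{J}_p(v^{k_j}_t)\rightharpoonup\xi$ and $v^{k_j}\to v$ strongly in $L^p$. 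This yields seminorm convergence on any $[t_0,t_1]$ \emph{without} any information on $[g^{k_j}]$. Only then is $\xi=\mathcal{J}_p(v_t)$ identified, by comparing the energy identity for $v$ (with $\xi$) against the liminf of the $v^{k_j}$ identities between two times $t_0<t_1$ where strong $W^{s,p}$ convergence already holds, and invoking equality in Young's inequality. The strong convergence of $v^{k_j}_t$ then follows from convergence of $\int|v^{k_j}_t|^p$ on such intervals. In short: establish the limit equation with the unidentified $\xi$ \emph{first}, and use it (not the initial data) to drive the norm convergence.
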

\begin{proof}
As in \eqref{diffEnergyIdentity}, 
\begin{equation}\label{Energyk}
\frac{d}{dt}\frac1p [v^k(\cdot,t)]^p_{W^{s,p}(\R^n)}=-\int_{\Omega}|v^k_t(x,t)|^pdx,
\end{equation}
for almost every $t>0$. After integration we obtain
\begin{equation}\label{EnergykBound}
p \int^\infty_0\int_\Omega |v^k_t(x,t)|^pdxdt +\sup_{t\ge 0}[v_k(\cdot,t)]_{W^{s,p}(\R^n)} \le 2[g^k]^p_{W^{s,p}(\R^n)}.
\end{equation}
By assumption, the right hand side above is uniformly bounded. It follows that the sequence $\{v^k\}_{k\in \N}\in C_{\text{loc}}([0,\infty),L^p(\Omega))$ is equicontinuous, and $\{v^k(\cdot,t)\}_{k\in \N}\in W^{s,p}_0(\Omega)$ is uniformly bounded for each $t\ge 0$. By Theorem 1 in \cite{Sim87}, we can conclude that there is a subsequence $\{v^{k_j}\}_{j\in \N}\subset \{v^k\}_{k\in \N}$ converging in $C_{\text{loc}}([0,\infty),L^p(\Omega))$ to some $v$ satisfying \eqref{NaturalSpace}.  Passing to a further a subsequence, we may also assume that $v^{k_j}\rightharpoonup v$ in  $L^p_{\text{loc}}([0,\infty); W^{s,p}(\R^n))$. 

\par Since $v_t^{k}$ is bounded in $L^p(\Omega\times [0,\infty))$,  we may also assume 
$$
\begin{cases}
v_t^{k_j}\rightharpoonup v_t\quad \text{in}\quad L^p(\Omega\times [0,\infty))\\
{\cal J}_p(v_t^{k_j})\rightharpoonup \xi\quad \text{in}\quad L^q(\Omega\times [0,\infty))
\end{cases}
$$
where $1/p+1/q=1$. We will prove below that
\begin{equation}\label{needWeak}
\xi={\cal J}_p(v_t).
\end{equation}
Let us assume for the moment that \eqref{needWeak} holds.  Note that since the function $|z|^{p}$ is convex,
$$\frac1p [w]^p_{W^{s,p}(\R^n)}
\ge  \frac1p [v^{k_j}(\cdot,t)]^p_{W^{s,p}(\R^n)} -\int_{\Omega}{\cal J}_p(v_t^{k_j}(x,t))(w(x)-v^{k_j}(x,t))dx
$$
for any $w\in W^{s,p}_0(\Omega)$. Integrating over the interval $ [t_0, t_1]$ and passing to the limit, we obtain 
$$
\int^{t_1}_{t_0}\frac1p [w]^p_{W^{s,p}(\R^n)}dt \ge  \int^{t_1}_{t_0}\left(\frac1p [v(\cdot,t)]^p_{W^{s,p}(\R^n)} -\int_{\Omega}\xi(x,t)(w(x)-v(x,t))dx\right)dt. 
$$
Here we made use of Fatou's lemma, the weak convergence of ${\cal J}_p (v_t^{k_j})$ and  the strong convergence of $v^{k_j}$.

Therefore, 
$$
\frac1p [w]^p_{W^{s,p}(\R^n)}\ge\frac1p [v(\cdot,t)]^p_{W^{s,p}(\R^n)} -\int_{\Omega}\xi(x,t)(w(x)-v(x,t))dx
$$
for a.e. $t\ge 0$. In particular, for each $\phi \in W^{s,p}_0(\Omega)$
\begin{equation}\label{haveWeak}
%\xi=\Delta_pv.
\int_{\Omega}\xi(x, t)\phi(x)dx +\int_{\R^n}\int_{\R^n}\J(v(x,t)-v(y,t))(\phi(x)-\phi(y)) d\mu(x,y)=0
\end{equation}
for a.e. $t\ge 0$.  Thus, once we verify \eqref{needWeak}, $v$ is then a weak solution of \eqref{pParabolic}.  

For each interval $[t_0, t_1]$
\begin{align*}
&\lim_{j\rightarrow \infty}\int^{t_1}_{t_0} [v^{k_j}(\cdot,t)]^p_{W^{s,p}(\R^n)} dt  \\
&=\lim_{j\rightarrow \infty}\int^{t_1}_{t_0} \int_{\R^n}\int_{\R^n}\J(v^{k_j}(x,t)-v^{k_j}(y,t))(v^{k_j}(x,t)-v^{k_j}(y,t)) d\mu(x,y)\\
&=-\lim_{j\rightarrow \infty}\int^{t_1}_{t_0}\int_\Omega {\cal J}_p(v_t^{k_j}(x,t)) v^{k_j}(x,t)dxdt\\
&=-\int^{t_1}_{t_0}\int_\Omega \xi(x,t) v(x,t)dxdt\\
&=\int^{t_1}_{t_0}[v(\cdot,t)]^p_{W^{s,p}(\R^n)} dt,
\end{align*}
where the last equality is a consequence of \eqref{haveWeak}. Since weak convergence together with convergence of the norm implies strong convergence, we have
$$
v^{k_j}\rightarrow v\quad \text{in}\quad L^p_{\text{loc}}([0,\infty); W^{s,p}(\R^n)).
$$
\par It is now routine to combine the interpolation of $L^p$ spaces with the uniform bound \eqref{EnergykBound} to obtain the stronger convergence $v^{k_j}\rightarrow v$ in $L^r_{\text{loc}}([0,\infty); W^{s,p}(\R^n))$ for each $1\le r<\infty$. Further, upon extracting yet another subsequence, we can assume that
\begin{equation}\label{StrongConverge}
[v^{k_j}(\cdot,t)]^p_{W^{s,p}(\R^n)}\rightarrow [v(\cdot,t)]^p_{W^{s,p}(\R^n)}
\end{equation}
as $j\rightarrow \infty$, for a.e. $t\ge 0$.

\par We will now verify \eqref{needWeak}. As in the proof of Lemma \ref{AbsPhi}, \eqref{haveWeak} implies 
$$
\frac{d}{dt} \frac{1}{p}[v(\cdot,t)]^p_{W^{s,p}(\R^n)}= -\int_{\Omega}\xi(x,t)v_t(x,t)dx,
$$
for a.e. $t\geq 0$. Therefore, for each $t_1> t_0$
\begin{equation}\label{NeedtoCompare}
\int^{t_1}_{t_0}\int_{\Omega}\xi(x,s)v_t(x,s)dxds+\frac{1}{p}[v(\cdot,t_1)]^p_{W^{s,p}(\R^n)}=\frac{1}{p}[v(\cdot,t_0)]^p_{W^{s,p}(\R^n)}.
\end{equation}
In addition, integrating \eqref{Energyk} yields
\begin{equation}\label{NeedtoCompare2}
\int^{t_1}_{t_0}\int_{\Omega}\frac{1}{p}|v^{k_j}_t(x,s)|^p+\frac{1}{q}|{\cal J}_p(v_t^{k_j}(x,s))|^qdxds+\frac{1}{p}[v^{k_j}(\cdot,t_1)]^p_{W^{s,p}(\R^n)}= \frac{1}{p}[v^{k_j}(\cdot,t_0)]^p_{W^{s,p}(\R^n)}.
\end{equation}

Let now $t_0$ and $t_1$ be times for which \eqref{StrongConverge} holds and pass to the limit to obtain
$$
\int^{t_1}_{t_0}\int_{\Omega}\frac{1}{p}|v_t(x,s)|^p+\frac{1}{q}|\xi(x,s)|^qdxds+\frac{1}{p}[v(\cdot,t_1)]^p_{W^{s,p}(\R^n)}\le \frac{1}{p}[v(\cdot,t_0)]^p_{W^{s,p}(\R^n)}
$$
by weak convergence.  Together with \eqref{NeedtoCompare} this implies
$$
\int^{t_1}_{t_0}\int_{\Omega}\left(\frac{1}{p}|v_t(x,s)|^p+\frac{1}{q}|\xi(x,s)|^q -  \xi(x,s)v_t(x,s)\right)dxds\le 0.
$$
Identity \eqref{needWeak} now follows from the case of equality in Young's inequality.

Substituting $\xi={\cal J}_p(v_t)$ into \eqref{NeedtoCompare} and passing to the limit as $j\rightarrow \infty$ in \eqref{NeedtoCompare2} also gives 
$$
\lim_{j\rightarrow \infty}\int^{t_1}_{t_0}\int_{\Omega}|v^{k_j}_t(x,s)|^pdxds=\int^{t_1}_{t_0}\int_{\Omega}|v_t(x,s)|^pdxds.
$$
Again, since weak convergence together with convergence of the norm implies strong convergence, we obtain \eqref{SecondConv}.
\end{proof}

% Sketch implicit scheme
Let us now discuss how the ideas above can be used to construct weak solutions. As in \cite{HL14}, we aim to build weak solutions \eqref{pParabolic} by using the implicit time scheme for $\tau>0$: $v^0:=g$, 
\begin{equation}
\begin{cases}\label{ViscScheme}
{\cal J}_p\left(\frac{v^k - v^{k-1}}{\tau}\right)+(-\Delta_p)^s v^k=0,& x\in \Omega \\
 v^k=0, & x\in\R^n\setminus  \Omega
\end{cases}\quad k=1,2,\dots, N.
\end{equation}
The direct methods in the calculus of variations can be used to show that this scheme has a unique weak solution sequence $\{v^1,\dots, v^N\}\subset W^{s,p}_0(\Omega)$ for each $\tau>0$ and $N$, in the sense that
$$
\int_\Omega {\cal J}_p\left(\frac{v^k(x) - v^{k-1}(x)}{\tau}\right)\phi(x) dx+\int_{\R^n}\int_{\R^n} \J(v^k(x)-v^k(y))(\phi(x)-\phi(y))d \mu(x,y)=0, 
$$
for any $\phi\in W_0^{s,p}(\Omega)$. Our candidate for a solution $v(x,t)$ of \eqref{pParabolic} is the limit of $v^{N}(x)$, when $N$ tends to infinity with $\tau=t/N$. 

\par Choosing $\phi=v^k-v^{k-1}$ as test function, we 
obtain 
$$
%\frac{\left|u_k-u_{k-1}\right|^p}{\tau^{p-1}} + \Phi(u_k)\le \Phi(u_{k-1}).
\int_{\Omega}\frac{|v^k-v^{k-1}|^p}{\tau^{p-1}}dx +\frac{1}{p}[v^{k}]^p_{W^{s,p}(\R^n)}\le \frac{1}{p}[v^{k-1}]^p_{W^{s,p}(\R^n)}, \quad k=1, \dots, N.
$$
Summing over $k=1,\dots, j\le N$ yields
\begin{equation}\label{DiscIden}
\sum^{j}_{k=1}\int_{\Omega}\frac{|v^k-v^{k-1}|^p}{\tau^{p-1}}dx + \frac{1}{p}[v^{j}]^p_{W^{s,p}(\R^n)}\le \frac{1}{p}[g]^p_{W^{s,p}(\R^n)},
\end{equation}
which is a discrete analogue of the energy identity \eqref{diffEnergyIdentity}.

Let $\tau=T/N$ and $\tau_k=k\tau$, and define the ``linear interpolation" of the solution sequence as
\begin{equation}\label{LineApprox}
w_N(x,t):=
v^{k-1}(x)+\left(\frac{t-\tau_{k-1}}{\tau}\right)(v^k(x)-v^{k-1}(x)), 
\quad \tau_{k-1}\le t\le \tau_k, \quad k=1,\dots, N.
\end{equation}
From \eqref{DiscIden} we conclude
\begin{equation}
p\int^T_0\int_\Omega |\partial_t w_N(x,t)|^pdxdt + \sup_{0\le t\le T}[w_N(\cdot,t)]^p_{W^{s,p}(\R^n)} \le 2[g]^p_{W^{s,p}(\R^n)} \nonumber 
\end{equation}
Arguing as in the proof of Theorem \ref{CompactnessLem}, we can obtain a subsequence $w_{N_j}$ and a weak solution $w$ of \eqref{MainPDE} on $\Omega\times (0,T)$ such that
$$
w_{N_j}\rightarrow w
\quad \text{in}\quad \begin{cases}
C([0,T]; L^p(\Omega))\\
L^p([0,T]; W^{s,p}(\R^n))
\end{cases}
$$
and
$$
\begin{cases}
\partial_tw_{N_j}\rightarrow w_t\quad \text{in}\quad L^p(\Omega\times [0,T]),\\
{\cal J}_p(\partial_tw_{N_j})\rightharpoonup {\cal J}_p(w_t)\quad \text{in}\quad L^q(\Omega\times [0,T]).
\end{cases}
$$

\par It remains to construct a global in time solution. This can be accomplished as follows: Let $k\in \N$ and let $w^k$ be the weak solution of \eqref{MainPDE} above for $T=k$. Define
$$
z^k(\cdot, t)=\begin{cases} w^k(\cdot, t), & t\in [0,k],\\
w^k(\cdot, k), & t\in [k,\infty).
\end{cases}
$$
One readily verifies that $z^k$ satisfies \eqref{NaturalSpace}. In addition, the proof of Theorem \ref{CompactnessLem} can easily be adapted to give that $z^k$ has a subsequence converging as in \eqref{FirstConv}, \eqref{SecondConv} to a global weak solution of \eqref{pParabolic}. We omit the details. 

\begin{rem} \label{rem:diffeq} At this point, we seize the opportunity to mention that the ``step function" approximation 
\begin{equation}\label{StepApprox}
v_N(\cdot,t):=
\begin{cases}
g, & t=0\\
v^k, & t\in (\tau_{k-1},\tau_k], \quad k=1,\ldots,N
\end{cases}
\end{equation}
converges in $C([0,T]; L^p(\Omega))$\ to the same weak solution $v$ as the linear interpolating sequence \eqref{LineApprox}. Indeed, by \eqref{DiscIden}
\begin{align*}
\int_{\Omega}|w_N(x,t)-v_N(x,t)|^pdx &\le \max_{1\le k\le N}\int_{\Omega}|v^k(x)-v^{k-1}(x)|^pdx \\
& \le \frac1p \tau^{p-1}[g]^p_{W^{s,p}(\R^n)} \\
&= \frac1p\left(\frac{T}{N}\right)^{p-1}[g]^p_{W^{s,p}(\R^n)}.
\end{align*}
%This fact will be used in Section \ref{sec:visc}, when we identify that our constructed viscosity solution is the same as the constructed weak solution.
This fact will be used in Section \ref{sec:visc}, where we verify that the viscosity solution we construct is also a weak solution.
\end{rem}

%%%%%%%%%%%%%%%%%%%%%%%%%%%%%%%%% Viscosity solutions %%%%%%%%%%%%%%%%%%%%%%%%%%%%%
\section{Viscosity solutions}\label{sec:visc}%\label{ViscSoln}

Throughout this section we assume that $\partial \Omega$ is $C^{1,1}$, $p\ge 2$ , $g\in W_0^{s,p}(\Omega)\cap C(\overline\Omega)$ and that there is a ground state $\Psi$ such that 
$$
|g|\leq \Psi.
$$ 
Our main result in this section is:

 \begin{prop}\label{ViscSolnResult}
There is a unique viscosity solution $v$ of the initial value problem \eqref{pParabolic} which is also a weak solution.
\end{prop}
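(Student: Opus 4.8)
The plan is to build a viscosity solution by a vanishing-viscosity/implicit-time approximation that simultaneously yields a weak solution, and then to establish uniqueness for viscosity solutions via a comparison principle. First I would reuse the implicit time scheme \eqref{ViscScheme} from Section~\ref{sec:weak}: for each $\tau>0$ one solves a sequence of elliptic problems for $v^k$, and the step-function interpolant $v_N$ of \eqref{StepApprox} together with the linear interpolant $w_N$ of \eqref{LineApprox} converges (along a subsequence) in $C([0,T];L^p(\Omega))$ to a weak solution $v$ of \eqref{pParabolic} by the compactness machinery behind Theorem~\ref{CompactnessLem}. The key new point is that each elliptic step $v^k$ is, under the standing hypotheses ($\partial\Omega\in C^{1,1}$, $p\ge 2$, $g$ continuous with $|g|\le\Psi$), a \emph{viscosity solution} of its defining equation and is continuous up to $\overline\Omega$; here I would invoke the regularity theory for the stationary fractional $p$-Laplacian (the references \cite{IMS15}, \cite{KMS15}, \cite{Lin14}, \cite{BL15} cited in the introduction) to get continuity of $v^k$, and the equivalence of weak and viscosity solutions for the stationary problem. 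The comparison $|g|\le\Psi$ is propagated through the scheme: since $e^{-\mu_{s,p}t}\Psi$ and $-e^{-\mu_{s,p}t}\Psi$ are (by Corollary~\ref{cor:icgroundstate}) exact solutions, a discrete comparison principle for \eqref{ViscScheme} gives $|v^k|\le (1+\mu_{s,p}\tau)^{-k}\Psi$, which in particular gives uniform spatial modulus of continuity near $\partial\Omega$ and keeps the approximants equibounded and equicontinuous there.

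Next I would pass the viscosity property through the limit. The stability of viscosity solutions under locally uniform convergence is the standard tool: if $v_N$ (or $w_N$) solves the scheme and converges locally uniformly, the limit $v$ is a viscosity solution of the parabolic equation $\mathcal{J}_p(v_t)+\laps v=0$ in $\Omega\times(0,\infty)$. To make ``locally uniform'' legitimate I need the convergence in $C([0,T];L^p(\Omega))$ to be upgraded to uniform convergence on compact subsets; this is where I would use the interior Hölder estimate of Theorem~\ref{Holderthm} (applicable since $p\ge2$), together with the boundary barrier $|v_N|\le C(\tau,N)\Psi \le C\Psi$ and continuity of $\Psi$ up to $\partial\Omega$, to get a uniform modulus of continuity for the whole family $\{v_N\}$ on $\overline\Omega\times[0,T]$ — including at $t=0$, using the discrete energy bound \eqref{DiscIden} to control the time-oscillation of $v_N$ near $t=0$ (the estimate $\int|w_N-v_N|^p\le \frac1p(T/N)^{p-1}[g]^p$ from Remark~\ref{rem:diffeq} shows the two interpolants share the limit, so it suffices to treat one of them). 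Arzelà–Ascoli then gives uniform convergence on $\overline\Omega\times[0,T]$ along a subsequence, the limit attains the initial datum $g$ continuously, and stability gives that $v$ is a viscosity solution; it is simultaneously the weak solution produced by Theorem~\ref{CompactnessLem}.

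Finally, uniqueness: I would prove a comparison principle for viscosity sub/supersolutions of \eqref{pParabolic}. Given a subsolution $u$ and a supersolution $w$ with $u\le w$ on the parabolic boundary ($\R^n\setminus\Omega$ for all times, and at $t=0$), one shows $u\le w$ throughout. The argument is the usual doubling-of-variables in the $x$ and $t$ variables, $u(x,t)-w(y,\sigma)-\frac{|x-y|^2}{2\e}-\frac{|t-\sigma|^2}{2\delta}-$(a small time-penalization $\eta/(T-t)$ to force an interior maximum), evaluating the two equations at the doubling point and using that for $p\ge2$ the nonlocal operator $\laps$, tested against the quadratic penalization plus the functions themselves at a maximum point, produces the right sign; the ordering of the $v_t$-terms follows from monotonicity of $\mathcal{J}_p$. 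The contributions from the nonlocal tails are controlled because both functions agree (and vanish) outside $\Omega$. Two viscosity solutions then coincide, and since the constructed object is a viscosity solution, it is \emph{the} viscosity solution, and it is a weak solution; this is exactly the assertion of Proposition~\ref{ViscSolnResult}.

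The main obstacle I expect is the comparison principle for the \emph{parabolic} nonlocal $p$-Laplacian with the nonlinear time term $\mathcal{J}_p(v_t)$: handling the principal-value singularity of $\laps$ at the doubling point (splitting into a near-field controlled by the test-function's Hessian and a far-field controlled by the ordering) while simultaneously managing the $\mathcal{J}_p(v_t)$ term requires care, and is the place where the hypothesis $p\ge2$ (so that $\mathcal{J}_p$ and the relevant difference quotients behave well) is genuinely used; this is essentially the content that will be carried out in detail in Section~\ref{sec:visc}.
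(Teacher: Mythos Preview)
Your overall architecture --- implicit time scheme, pass to the limit to get a viscosity solution that is also a weak solution, prove a comparison principle for uniqueness --- matches the paper. The comparison principle sketch is fine and is essentially Proposition~\ref{prop:comparison}. However, there is a genuine gap in the step where you upgrade the $C([0,T];L^p(\Omega))$ convergence of $v_N$ to locally uniform convergence.

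You propose to obtain equicontinuity of the family $\{v_N\}$ by invoking Theorem~\ref{Holderthm}. But Theorem~\ref{Holderthm} is a H\"older estimate for viscosity solutions of the \emph{parabolic} equation $|v_t|^{p-2}v_t+(-\Delta_p)^s v=0$, and the step-function interpolant $v_N$ is not such a solution: it is piecewise constant in time and each slice $v^k$ solves an \emph{elliptic} problem with right-hand side $-\mathcal{J}_p((v^k-v^{k-1})/\tau)$. You cannot apply Theorem~\ref{Holderthm} to $v_N$. Nor can you extract a uniform spatial H\"older bound for $v^k$ from the stationary theory in \cite{IMS15}, because that estimate depends on $\|(-\Delta_p)^s v^k\|_{L^\infty}=\|\mathcal{J}_p((v^k-v^{k-1})/\tau)\|_{L^\infty}$, which is not controlled uniformly as $\tau\to 0$. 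Likewise, the energy bound \eqref{DiscIden} gives only $L^p$ control of $v^k-v^{k-1}$, not the $L^\infty$ time-continuity you would need for Arzel\`a--Ascoli. So, as written, you have no mechanism to obtain the locally uniform convergence on which your stability argument rests.

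The paper avoids this entirely by the Barles--Perthame half-relaxed limits. One defines
\[
\overline v(x,t)=\limsup_{\substack{N\to\infty\\ (y,s)\to(x,t)}} v_N(y,s),\qquad
\underline v(x,t)=\liminf_{\substack{N\to\infty\\ (y,s)\to(x,t)}} v_N(y,s),
\]
which require only the uniform $L^\infty$ bound from Lemma~\ref{SchemeVISC}. One then shows directly that $\overline v$ is a viscosity subsolution (and $\underline v$ a supersolution) by transferring a strict test-function maximum for $\overline v-\phi$ to nearby discrete maxima of $v_{N_j}-\phi_{N_j}$ (Lemma~\ref{ApproxPoints}) and exploiting the discrete viscosity property of the scheme (Lemma~\ref{LemdiscreteVisc}). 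Barriers built from the ground state $\Psi$ and from $g(x_0)\pm\e\pm C(\alpha t+\eta(x-x_0))$ (Lemmas~\ref{lem:BC} and~\ref{lem:IC}) ensure $\overline v$ and $\underline v$ take the correct boundary and initial values. The comparison principle (Proposition~\ref{prop:comparison}) then forces $\overline v\le \underline v$, hence $\overline v=\underline v=:v$ is continuous and $v_N\to v$ locally uniformly. Thus uniform convergence is obtained as a \emph{consequence} of comparison, not as an input, and no H\"older estimate for the approximants is needed.
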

% uniqueness of viscosity solutions
It is not known whether or not uniqueness holds for weak solutions of \eqref{pParabolic}, even in the local case. However, quite standard methods for viscosity solutions apply to \eqref{pParabolic}. The key here is that the term $|v_t|^{p-2}v_t$ is strictly monotone with respect to $v_t$. In what follows, we will prove that the discrete scheme \eqref{ViscScheme} converges both to the unique viscosity solution and to a weak solution.

We first define \emph{viscosity solutions} of the relevant equations.
\begin{defn}%\label{ellipticvisc}
Let $\Omega$ be an open set in $\R^n$ and $f(x,u)$ a continuous function. A function $u\in L^\infty(\R^n)$ which is upper semicontinuous in ${\Omega}$ is a \emph{subsolution} of 
$$  
(-\lap_p)^s u\,\leq f(x,u) \text{ in $\Omega$}
$$
if the following holds: whenever $x_0\in \Omega$ and $\phi\in C^2({B_r(x_0)})$ for some $r>0$ are such that
$$
\phi(x_0)=u(x_0), \quad \phi(x)\geq u(x) \text{ for $x\in B_r(x_0)\subset \Omega$}
$$
then we have
$$
(-\lap_p)^s\phi^r\, (x_0)\leq f(x_0,\phi(x_0)),
$$
where
$$
\phi^r =\left\{\begin{array}{lr}\phi \text{ in }B_r(x_0),\\
u\text{ in }\R^n\setminus B_r(x_0).
\end{array}\right. 
$$
A \emph{supersolution} is defined similarly and a \emph{solution} is a function which is both a sub- and a supersolution.
\end{defn}

\begin{rem}\label{welldef} For a bounded function $f$ which is $C^2$ in a neighborhood of $x_0$, $(-\lap_p)^s f\, (x_0)$ is well defined. Indeed, we may split the operator into one integral over $B_1(x_0)$ and another over $\R^n\setminus B_1(x_0)$. The latter is well-defined since $f$ is bounded. For the former, we may write for $\e>0$
\begin{align*}
&2\pv \int_{B_1(x_0)}\J(f(x_0)-f(y))|x_0-y|^{-n-sp} dy\\
&=2\lim_{\e\to 0}\int_{B_1\setminus B_\e}\J(f(x_0)-f(x_0+y))|y|^{-n-sp} dy\\
&=\lim_{\e\to 0}\int_{B_1\setminus B_\e}\left(\J(f(x_0)-f(x_0+y))+\J(f(x_0)-f(x_0-y))\right)|y|^{-n-sp} dy.
\end{align*}
Since $f$ is a $C^2$ function and $p\geq 2$, we have the estimate
$$
|f(x)-f(x+y)|^{p-2}(f(x)-f(x+y))+|f(x)-f(x-y)|^{p-2}(f(x)-f(x-y))\leq C|y|^p.
$$
This is due to the elementary inequality
$$
\Big||a+b|^{p-2}(a+b)-|a|^{p-2}a\Big|\leq C |b|(|a|+|b|)^{p-2}.
$$
Therefore, the integral 
$$
\int_{B_1}\left(\J(f(x_0)-f(x_0+y))+\J(f(x_0)-f(x_0-y))\right)|y|^{-n-sp} dy
$$
is absolutely convergent, so that the principal value exists.
\end{rem}

We define viscosity solutions of the evolutionary equation \eqref{MainPDE}. We introduce the class of test functions
$$
C^{2,1}_{x,t}(D\times I), \quad D\times I\subset \R^n\times \R.
$$
consisting of functions that are $C^2$ in the spatial variables and $C^1$ in $t$, in the set $D\times I$.

\begin{defn}\label{paravisc} Let $\Omega \in \R^n$ be an open set, $I\in \R$ be an open interval. A function $v\in L^\infty(\R^n\times I)$ which is upper semicontinuous in ${\Omega\times I}$ is a \emph{subsolution} of 
$$  
|v_t|^{p-2}v_t+(-\lap_p)^s v\,\leq C \text{ in $\Omega \times I$}
$$
if the following holds: whenever $(x_0,t_0)\in \Omega\times I$ and $\phi\in C^{2,1}_{x,t}({B_r(x_0)}\times (t_0-r,t_0+r))$ for some $r>0$ are such that
$$
\phi(x_0,t_0)=v(x_0,t_0), \quad \phi(x,t)\geq v(x,t) \text{ for $(x,t)\in B_r(x_0)\times (t_0-r,t_0+r)$}
$$
then
$$
|\phi_t(x_0,t_0) |^{p-2}\phi_t(x_0,t_0)+(-\lap_p)^s\phi^r\, (x_0,t_0)\leq C,
$$
where
$$
\phi^r (x,t) =\left\{\begin{array}{lr}\phi \text{ in }B_r(x_0)\times (t_0-r,t_0+r),\\
v\text{ in }\R^n\setminus B_r(x_0)\times (t_0-r,t_0+r).
\end{array}\right. 
$$
A \emph{supersolution} is defined similarly and a \emph{solution} is a function which is both a sub- and a supersolution.
\end{defn}
\begin{rem}
In both of the definitions above, it is obvious that we can replace the condition that $\phi$ touches $v$ from above at a point, with the condition that $v-\phi$ has a maximum at a point. In addition, as is standard when dealing with viscosity solutions, it is enough to ask that $\phi$ touches $v$ strictly at a point or equivalently that $v-\phi$ has a strict maximum at a point.
\end{rem}
Now we are ready to treat the implicit scheme \eqref{ViscScheme}. We first construct viscosity solutions $v^1,\dots, v^N$.

\begin{lem}\label{SchemeVISC} For each $N$ and $\tau$,  the implicit scheme \eqref{ViscScheme} generates viscosity solutions, $v^k\in C(\overline \Omega)$ for $k=1,\ldots,N$. Moreover,
$$
\max_{1\le k\le N}\sup_{\R^n}|v^k|\le\sup_{\R^n}|g|.
$$
\end{lem}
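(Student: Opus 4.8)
The plan is to argue by induction on $k$, treating \eqref{ViscScheme} as a chain of stationary problems. Fix $k\ge 1$ and set $f(x,u):=-\J\bigl((u-v^{k-1}(x))/\tau\bigr)$, so that the $k$-th equation of \eqref{ViscScheme} reads $\laps v^k=f(x,v^k)$ in $\Omega$ with $v^k=0$ in $\R^n\setminus\Omega$. As long as $v^{k-1}\in C(\overline\Omega)$ — which holds for $v^0=g$ by the standing hypothesis of the section and which the induction will propagate — the function $f$ is continuous in $(x,u)$ and nonincreasing in $u$, and existence and uniqueness of the weak solution $v^k\in W^{s,p}_0(\Omega)$ has already been established by the direct method. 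It then suffices to prove, for each $k$, that (i) $\sup_{\R^n}|v^k|\le\sup_{\R^n}|v^{k-1}|$, (ii) $v^k\in C(\overline\Omega)$, and (iii) $v^k$ is a viscosity solution of $\laps u=f(x,u)$; iterating (i) starting from $v^0=g$ then gives the displayed bound.

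For (i): put $M:=\sup_{\R^n}|v^{k-1}|\ge 0$ and test the weak form of \eqref{ViscScheme} with $\phi=(v^k-M)_+\in W^{s,p}_0(\Omega)$, which is admissible since $M\ge 0$ and $v^k\equiv 0$ off $\Omega$. Because $t\mapsto(t-M)_+$ is nondecreasing and $\J(\cdot)$ is increasing, $\J(a-b)\bigl[(a-M)_+-(b-M)_+\bigr]\ge 0$ for all real $a,b$, so the double integral is nonnegative and hence $\int_\Omega\J\bigl((v^k-v^{k-1})/\tau\bigr)(v^k-M)_+\,dx\le 0$. On $\{v^k>M\}$ we have $v^k>M\ge v^{k-1}$, so the integrand is strictly positive there; therefore $|\{v^k>M\}|=0$, i.e. $v^k\le M$ a.e. Applying the same argument to $-v^k$ yields $v^k\ge -M$ a.e., which is (i).

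For (ii): since $v^k,v^{k-1}\in L^\infty$, the right-hand side $x\mapsto f(x,v^k(x))$ lies in $L^\infty(\Omega)$, so interior local boundedness and Hölder continuity for weak solutions of $\laps u=h$ with $h\in L^\infty$ (see \cite{DKKP15}, \cite{BP14}) give $v^k\in C^\alpha_{\textup{loc}}(\Omega)$, and since $\Omega$ is $C^{1,1}$ the global boundary regularity of \cite{IMS15} upgrades this to $v^k\in C(\overline\Omega)$ with $v^k=0$ on $\partial\Omega$. (Alternatively, one may read off the boundary behavior from the standing hypothesis $|g|\le\Psi$: the ground state $\Psi>0$ is a weak supersolution and $-\Psi$ a weak subsolution of $\laps u=f(x,u)$ whenever $|v^{k-1}|\le\Psi$ — a property that propagates from $|g|\le\Psi$ — so the comparison principle for this equation, obtained by testing with $(v^k-\Psi)_+$ and $(-v^k-\Psi)_+$, gives $|v^k|\le\Psi$, and $\Psi\in C(\overline\Omega)$ vanishes on $\partial\Omega$.) This closes the induction: from $v^{k-1}\in C(\overline\Omega)$ we have produced $v^k\in C(\overline\Omega)$.

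Finally, (iii): with $v^k\in C(\overline\Omega)$ and $f$ continuous, $v^k$ is a viscosity solution of $\laps u=f(x,u)$, which is the familiar statement that a continuous weak super-/subsolution of a fractional $p$-Laplace equation is a viscosity super-/subsolution; I would run the usual argument (cf. \cite{LL14}, \cite{KMS15}). For the supersolution property: if $\phi\in C^2$ touches $v^k$ strictly from below at $x_0\in\Omega$ on a small ball and $\laps\phi^r(x_0)<f(x_0,v^k(x_0))$, then by Remark \ref{welldef} together with the boundedness of $v^k$ the map $y\mapsto\laps\phi^r(y)$ is continuous near $x_0$, so $\laps\phi^r(y)<f(y,\phi(y))-\delta$ on some $B_\rho(x_0)$; shifting $\phi$ upward by a small $\beta>0$ on $B_\rho$ and leaving it equal to $v^k$ outside produces a function $\psi$ which — using $\phi^r\le v^k$ and the local Lipschitz bound for $\J$ — is a classical, hence weak, strict subsolution of $\laps u=f(x,u)$ in $B_{\rho/2}(x_0)$ with $\psi\le v^k$ outside $B_{\rho/2}(x_0)$, yet $\psi(x_0)=v^k(x_0)+\beta>v^k(x_0)$, contradicting the comparison principle against the weak supersolution $v^k$. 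I expect this last step to be the main obstacle: the nonlocal tail couples the smooth test function near $x_0$ with the merely bounded function $v^k$ far away, so turning the pointwise viscosity defect into a genuine contradiction with the weak formulation requires the continuity claim of Remark \ref{welldef} and a careful interplay between the localization radius $\rho$ and the size $\beta$ of the upward shift. By contrast, the boundary continuity in (ii) is essentially off the shelf once the $C^{1,1}$ geometry of $\Omega$ is invoked.
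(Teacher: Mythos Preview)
Your proof is correct and follows essentially the same route as the paper: an $L^\infty$ bound by comparison with constants (you carry this out directly by testing with $(v^k-M)_+$, while the paper simply observes that $\pm\sup|g|$ are super/subsolutions and invokes the weak comparison principle from \cite{LL14}), continuity up to the boundary via \cite{IMS15}, and the weak--to--viscosity implication by the argument of \cite{LL14}, Proposition~11. The only organizational difference is that you package everything into a single induction on $k$, whereas the paper first establishes the bound for all $k$, then continuity for all $k$, then the viscosity property; both orderings work.
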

\begin{proof}
Consider the implicit scheme \eqref{ViscScheme} for $k=1$
$$
{\cal J}_p\left(\frac{v^1 - g}{\tau}\right)+(-\Delta_p)^s v^1= 0,\quad x\in \Omega.
$$
This means that
$$
\int_\Omega {\cal J}_p\left(\frac{v^1 - g}{\tau}\right)\phi dx+\int_{\R^n}\int_{\R^n} \J(v^1(x)-v^1(y))(\phi(x)-\phi(y))d \mu(x,y)=0
$$
for any $\phi\in W_0^{s,p}(\Omega)$. The existence of such a weak solution follows from the direct methods of calculus of variations. Since ${\cal J}_p$ is strictly increasing it is standard to prove a comparison principle for weak sub- and supersolutions, see for instance Lemma 9 in \cite{LL14} for a proof. Clearly, the constant function $\sup_{\R^n}|g|$ is a supersolution, hence $v^1\le \sup_{\R^n}|g|$. Similarly, 
$v^1\ge -\sup_{\R^n}|g|$, and thus $|v^1|\le \sup_{\R^n}|g|$. By induction, $|v^k|\le \sup_{\R^n}|g|$ for $k=2,\dots, N$. As the left hand side of the PDE \eqref{ViscScheme} is bounded, it follows by Theorem 1.1 in \cite{IMS15}, that $v^k$ is continuous in $\overline\Omega$ for $k=1,\dots, N$.

That each $v^k$ is a viscosity solution can be verified by following the proof of Proposition 11 in \cite{LL14} line by line. We omit the details.
\end{proof}

The natural candidate for a viscosity solution of \eqref{pParabolic} is $\lim_{N\rightarrow\infty} v_N$ where $v_N$ is defined in \eqref{StepApprox}.  Before proving this, we present some technical lemmas.
% Discrete viscosity property 
\begin{lem}\label{LemdiscreteVisc}
Let $N\in \N$. Further assume $\{\psi^0,\psi^1,\dots,\psi^N\}\subset C^2(\Omega)$ and $(x_0,k_0)\in\Omega\times\{1,\dots,N\}$ are such that
\begin{equation}\label{discreteVisc}
v^k(x)-\psi^k(x)\le v^{k_0}(x_0)-\psi^{k_0}(x_0) 
\end{equation}
for $x$ in $B_r(x_0)$ and $k\in \{k_0-1,k_0\}$. Then 
$$
{\cal J}_p\left(\frac{\psi^{k_0}(x_0) - \psi^{k_0-1}(x_0)}{\tau}\right)+(-\Delta)_p^s(\psi^{k_0})^r(x_0)\leq 0.
$$
\end{lem}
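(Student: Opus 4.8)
The statement connects the discrete scheme \eqref{ViscScheme} at step $k_0$ with the viscosity sub-inequality, so the plan is to exploit that $v^{k_0}$ is a weak (hence, by Lemma \ref{SchemeVISC}, viscosity) solution of
$$
{\cal J}_p\!\left(\frac{v^{k_0}-v^{k_0-1}}{\tau}\right)+(-\Delta_p)^s v^{k_0}=0 \quad\text{in }\Omega,
$$
and to feed it an admissible test function built from $\psi^{k_0}$ and $\psi^{k_0-1}$. First I would observe that \eqref{discreteVisc} at $k=k_0$ says exactly that the function $v^{k_0}-\psi^{k_0}$ has a maximum at $x_0$ over $B_r(x_0)$, with $v^{k_0}(x_0)=\psi^{k_0}(x_0)$ after adding the constant $v^{k_0}(x_0)-\psi^{k_0}(x_0)$ to $\psi^{k_0}$ (which changes nothing in the final inequality since only differences of $\psi^{k_0}$ enter). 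So $\psi^{k_0}$ (suitably normalized) touches $v^{k_0}$ from above at $x_0$, and the definition of viscosity subsolution for the stationary-type equation ${\cal J}_p\!\big((v^{k_0}-v^{k_0-1})/\tau\big)+(-\Delta_p)^s v^{k_0}\le 0$ — which is the right reading of \eqref{ViscScheme} with the zeroth-order term treated as part of $f(x,u)$ — gives
$$
{\cal J}_p\!\left(\frac{\psi^{k_0}(x_0)-v^{k_0-1}(x_0)}{\tau}\right)+(-\Delta_p)^s(\psi^{k_0})^r(x_0)\le 0.
$$

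The remaining task is to replace $v^{k_0-1}(x_0)$ by $\psi^{k_0-1}(x_0)$ inside ${\cal J}_p$, and here I would use \eqref{discreteVisc} at $k=k_0-1$, which evaluated at $x=x_0$ gives (after the same normalization) $v^{k_0-1}(x_0)\le \psi^{k_0-1}(x_0)$, i.e. $-v^{k_0-1}(x_0)\ge -\psi^{k_0-1}(x_0)$. Since $t\mapsto {\cal J}_p(t)=|t|^{p-2}t$ is nondecreasing, it follows that
$$
{\cal J}_p\!\left(\frac{\psi^{k_0}(x_0)-\psi^{k_0-1}(x_0)}{\tau}\right)\le {\cal J}_p\!\left(\frac{\psi^{k_0}(x_0)-v^{k_0-1}(x_0)}{\tau}\right),
$$
and combining the two displays yields the claimed inequality. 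One should double-check the normalization bookkeeping: writing $\tilde\psi^{k}:=\psi^k+\big(v^{k_0}(x_0)-\psi^{k_0}(x_0)\big)$ for $k\in\{k_0-1,k_0\}$, condition \eqref{discreteVisc} becomes $v^k\le \tilde\psi^k$ on $B_r(x_0)$ for both $k$ with equality for $k=k_0$ at $x_0$; the test-function touching is then genuine, $(\tilde\psi^{k_0})^r=(\psi^{k_0})^r$ has the same $(-\Delta_p)^s$ at $x_0$ since it differs by an additive constant, and $\tilde\psi^{k_0}(x_0)-\tilde\psi^{k_0-1}(x_0)=\psi^{k_0}(x_0)-\psi^{k_0-1}(x_0)$, so no constant survives.

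The only genuine subtlety — hence the step I expect to need the most care — is justifying that "$v^{k_0}$ is a viscosity solution of \eqref{ViscScheme}'' may be applied with a $C^2$ test function touching only on $B_r(x_0)$ while the zeroth-order term ${\cal J}_p((v^{k_0}-v^{k_0-1})/\tau)$ is merely continuous (it is, by Lemma \ref{SchemeVISC}) and is frozen at $x_0$: this is exactly the structure in the definition of viscosity sub/supersolution of $(-\Delta_p)^s u\le f(x,u)$ recorded earlier, with $f(x,u)=-{\cal J}_p((u-v^{k_0-1}(x))/\tau)$ continuous in $(x,u)$, so the definition applies verbatim and no extra regularity of $v^{k_0-1}$ beyond continuity is needed. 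Everything else is the elementary monotonicity of ${\cal J}_p$ and the translation-invariance of $(-\Delta_p)^s$ under additive constants, both of which are routine.
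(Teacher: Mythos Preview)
Your proposal is correct and follows essentially the same route as the paper: use \eqref{discreteVisc} at $k=k_0$ to invoke the viscosity subsolution property of $v^{k_0}$ (yielding the inequality with $v^{k_0}(x_0)-v^{k_0-1}(x_0)$ inside ${\cal J}_p$), then use \eqref{discreteVisc} at $x=x_0$, $k=k_0-1$ together with the monotonicity of ${\cal J}_p$ to replace this difference by $\psi^{k_0}(x_0)-\psi^{k_0-1}(x_0)$. The paper's version is simply terser, writing $v^{k_0}(x_0)$ directly in the ${\cal J}_p$ term and invoking the remark that one may test with a local maximum rather than a touching function, whereas you unpack the normalization explicitly.
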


\begin{proof}
Evaluating the left hand side \eqref{discreteVisc} at $k=k_0$ gives 
$$
{\cal J}_p\left(\frac{v^{k_0}(x_0) - v^{k_0-1}(x_0)}{\tau}\right)+(-\Delta_p)^s (\psi^{k_0})^r(x_0)\leq 0
$$
as $v^k$ is a viscosity solution of \eqref{ViscScheme}.  Evaluating the left hand side of \eqref{discreteVisc} at $x=x_0$ and $k=k_0-1$ gives
$\psi^{k_0}(x_0) - \psi^{k_0-1}(x_0)\le v^{k_0}(x_0) - v^{k_0-1}(x_0)$. The claim follows from the above inequality and the monotonicity of 
${\cal J}_p$. 
\end{proof} 
Let $\overline{v}$ and $\underline{v}$ denote the weak upper and lower limits respectively, of $v_N$ defined in \eqref{StepApprox}, i.e., 
$$
\overline{v}(x,t):=\limsup_{\substack{N\rightarrow\infty\\ (y,s)\rightarrow(x,t)}}v_N(y,s),
$$
$$
\underline{v}(x,t):=\liminf_{\substack{N\rightarrow\infty\\ (v,s)\rightarrow(x,t)}}v_N(y,s).
$$
By Lemma \ref{SchemeVISC}, the sequence $\{v_N\}_{N\in \N}$ is bounded  independently of $N\in \N$.  As a result, $\overline{v}$ and $\underline{v}$ are well defined and finite. In addition, $\overline{v}$ and $-\underline{v}$ are upper semicontinuous. We recall the following result, which is Lemma 4.4 in \cite{HL14}. The statement of the result in \cite{HL14} is for smooth $\phi$, but the proof holds also for $\phi$ as below.
\begin{lem}\label{ApproxPoints}
Assume $\phi\in C^{2,1}_{x,t}((\Omega\times(0,T))$.  For $N\in \N$ define 
$$
\phi_N(x,t):=
\begin{cases}
\phi(x,0),& t=0\\
\phi(x,\tau_k),& t\in (\tau_{k-1},\tau_k],\quad  k=1,\ldots,N.
\end{cases}
$$
Suppose $\overline{v}-\phi$ has a strict local maximum at $(x_0,t_0)\in \Omega\times(0,T)$. Then there is $(x_j,t_j)\rightarrow  (x_0,t_0)$ and $N_j\rightarrow \infty$, as $j\rightarrow \infty$, such that 
$v_{N_j} -\phi_{N_j}$ has local maximum at $(x_j,t_j)$. A corresponding result holds in the case of a strict local minimum.
\end{lem}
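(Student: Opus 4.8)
\textbf{Proof proposal for Lemma~\ref{ApproxPoints}.}

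The plan is to run the standard ``doubling'' argument from viscosity solution theory, adapted to the discrete-in-time setting, exactly as in the proof of Lemma 4.4 in \cite{HL14}; the only point that needs checking is that the argument there used nothing about $\phi$ beyond $\phi\in C^{2,1}_{x,t}$, so it applies verbatim with our weaker smoothness hypothesis. First I would fix a closed parabolic box $K=\overline{B_\rho(x_0)}\times[t_0-\rho,t_0+\rho]\subset \Omega\times(0,T)$ on which $(x_0,t_0)$ is the unique (strict) maximizer of $\overline v-\phi$, and set $m:=\max_K(\overline v-\phi)=\overline v(x_0,t_0)-\phi(x_0,t_0)$, with $\overline v-\phi< m$ on $K\setminus\{(x_0,t_0)\}$. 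By the definition of the weak upper limit $\overline v$ as a $\limsup$ over $N\to\infty$ and $(y,s)\to(x,t)$, I can choose points $(y_N,s_N)\to(x_0,t_0)$ and a subsequence $N_j\to\infty$ with $v_{N_j}(y_N,s_N)\to\overline v(x_0,t_0)$; relabel so that along this subsequence $v_N(\tilde y_N,\tilde s_N)\to\overline v(x_0,t_0)$ for suitable $(\tilde y_N,\tilde s_N)\to(x_0,t_0)$.

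Next I would compare $v_N$ with the time-discretized test function $\phi_N$. For each large $N$ let $(x_N,t_N)\in K$ be a point where $v_N-\phi_N$ attains its maximum over $K$ (this maximum is attained because $v_N$ is bounded and, once restricted to the grid of times $\tau_k$ and noting $\phi_N$ is a fixed continuous function on each time-slab, upper semicontinuity of $v_N$ on each slab together with compactness gives a maximizer; alternatively one argues slab by slab). Then
$$
v_N(x_N,t_N)-\phi_N(x_N,t_N)\;\ge\; v_N(\tilde y_N,\tilde s_N)-\phi_N(\tilde y_N,\tilde s_N)\;\longrightarrow\; \overline v(x_0,t_0)-\phi(x_0,t_0)=m,
$$
using that $\phi_N\to\phi$ uniformly on $K$ (which follows from uniform continuity of $\phi$ on the compact set $K$ and $\tau=T/N\to 0$). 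On the other hand, by passing to a further subsequence $(x_{N_j},t_{N_j})\to(x^*,t^*)\in K$, and using $v_N\le \overline v +o(1)$ near limit points together with the uniform convergence $\phi_{N_j}\to\phi$, I get
$$
m\;\le\;\liminf_j\big(v_{N_j}(x_{N_j},t_{N_j})-\phi_{N_j}(x_{N_j},t_{N_j})\big)\;\le\;\limsup_j v_{N_j}(x_{N_j},t_{N_j})-\phi(x^*,t^*)\;\le\;\overline v(x^*,t^*)-\phi(x^*,t^*).
$$
Thus $\overline v(x^*,t^*)-\phi(x^*,t^*)\ge m$, and since $(x_0,t_0)$ is the \emph{strict} maximizer of $\overline v-\phi$ on $K$ we conclude $(x^*,t^*)=(x_0,t_0)$, i.e. $(x_{N_j},t_{N_j})\to(x_0,t_0)$. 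In particular, for $j$ large the maximizers $(x_{N_j},t_{N_j})$ lie in the interior $B_\rho(x_0)\times(t_0-\rho,t_0+\rho)$, so they are genuine \emph{local} maxima of $v_{N_j}-\phi_{N_j}$, which is exactly the assertion. Setting $(x_j,t_j):=(x_{N_j},t_{N_j})$ finishes the case of a strict local maximum, and the strict local minimum case follows by applying this to $-\overline v$, $-\phi$ (equivalently to $\underline v$, $\phi$), or simply by symmetry.

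The only place where the weakened smoothness of $\phi$ enters is in asserting $\phi_N\to\phi$ uniformly on the compact box $K$; since $\phi\in C^{2,1}_{x,t}$ is in particular continuous, hence uniformly continuous on $K$, and $\phi_N(x,t)=\phi(x,\tau_k)$ with $|\tau_k-t|\le \tau=T/N$, this is immediate and requires no derivative bounds. Consequently the proof of Lemma 4.4 in \cite{HL14} goes through unchanged. I do not expect a serious obstacle here; the one mild technical point to be careful about is the existence of the maximizer $(x_N,t_N)$ of the merely upper semicontinuous function $v_N-\phi_N$ on $K$, which is handled by working on each of the finitely many time-slabs $K\cap(\R^n\times(\tau_{k-1},\tau_k])$ separately, on each of which $v_N$ is upper semicontinuous and $\phi_N$ is continuous, so a maximum exists by compactness, and then taking the largest slab-maximum.
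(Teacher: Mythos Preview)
Your proposal is correct and follows precisely the standard half-relaxed limit argument that the paper is invoking by citing Lemma~4.4 of \cite{HL14}; the paper gives no independent proof but simply notes that the argument there, which you have reproduced, uses only continuity of $\phi$ on compact sets (to get $\phi_N\to\phi$ uniformly) and therefore carries over verbatim to $\phi\in C^{2,1}_{x,t}$. One cosmetic remark: on each time-slab the function $v_N-\phi_N$ is in fact continuous in $x$ and constant in $t$ (since each $v^k\in C(\overline\Omega)$ by Lemma~\ref{SchemeVISC}), so the existence of the slab-maximizer is even more immediate than you indicate.
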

Before proving the uniqueness of viscosity solutions we need the following result, which verifies that whenever we can touch a subsolution from above with a $C_{x,t}^{2,1}$ function, we can treat the subsolution as a classical subsolution in space. The proof is almost identical to the one of Theorem 2.2 in \cite{CS09} or the one of Proposition 1 in  \cite{Lin14}.
\begin{prop}\label{pw}  Suppose 
$$
|v_t|^{p-2}v_t+(-\lap_p)^s v\leq C \text{ in $B_1\times I$}
$$
in the viscosity sense. Further assume that $(x_0,t_0)\in B_1\times I$ and $\phi\in C^{2,1}_{x,t}({B_r(x_0)}\times (t_0-r,t_0+r))$ are such that
$$
\phi(x_0,t_0)=v(x_0,t_0), \quad \phi(x,t)\geq v(x,t)\quad  \text{ for $(x,t)\in B_r(x_0)\times (t_0-r,t_0+r)$}
$$
for some $r>0$. Then $(-\lap_p)^s v$ is defined pointwise at $(x_0,t_0)$ and 
$$
|\phi_t(x_0,t_0) |^{p-2}\phi_t(x_0,t_0)+(-\lap_p)^s v\, (x_0,t_0)\leq C.  
$$
\end{prop}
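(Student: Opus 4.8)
The plan is to reduce the pointwise evaluation to the definition of viscosity subsolution by a careful truncation argument. Since $\phi$ touches $v$ from above at $(x_0,t_0)$ in the cylinder $B_r(x_0)\times(t_0-r,t_0+r)$, we may pass to a slightly smaller radius $\rho<r$ and, after subtracting a suitable smooth function vanishing to high order at $(x_0,t_0)$ (e.g. adding $\e(|x-x_0|^4+(t-t_0)^2)$ to $\phi$), arrange that $v-\phi$ has a \emph{strict} maximum at $(x_0,t_0)$; this costs us an arbitrarily small error $\e$ in the spatial second derivatives and in $\phi_t$, which we will send to zero at the end. The first key point is that, as noted in Remark \ref{welldef}, once $\phi$ is $C^2$ near $x_0$ the quantity $(-\lap_p)^s\phi^\rho(x_0,t_0)$ is well defined; the goal is to show that in fact $(-\lap_p)^s v(x_0,t_0)$ itself is well defined (the integral converges in the principal value sense) and that replacing $\phi^\rho$ by $v$ only \emph{decreases} the value of the operator.

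The second, central step is the monotonicity of $(-\lap_p)^s$ in its argument evaluated against the test configuration. Write, for $\sigma\le\rho$,
$$
\psi_\sigma(x,t):=\begin{cases}\phi(x,t),& x\in B_\sigma(x_0),\\ v(x,t),& x\in\R^n\setminus B_\sigma(x_0),\end{cases}
$$
so $\psi_\rho=\phi^\rho$ and $\psi_\sigma\ge v$ everywhere with equality at $(x_0,t_0)$ and also outside $B_\sigma(x_0)$; moreover $\psi_\sigma\le\psi_{\sigma'}$ whenever $\sigma\le\sigma'$ on the relevant set. Because $\sigma\mapsto\psi_\sigma(x_0,t_0)$ is constant ($=v(x_0,t_0)$) and $\psi_\sigma(x_0+y,t_0)$ is nonincreasing as $\sigma$ decreases, the integrand
$$
\mathcal{J}_p\big(\psi_\sigma(x_0,t_0)-\psi_\sigma(x_0+y,t_0)\big)|y|^{-n-sp}
$$
is, pointwise in $y$, nondecreasing as $\sigma\downarrow 0$ (using that $\mathcal J_p$ is monotone). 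One then checks, exactly as in the proof of Theorem 2.2 in \cite{CS09} or Proposition 1 in \cite{Lin14}, that $\psi_\sigma$ is still an admissible test function for the viscosity subsolution inequality at $(x_0,t_0)$ for every $\sigma\le\rho$ (it touches $v$ from above in a cylinder, and agrees with $v$ outside), so
$$
|\phi_t(x_0,t_0)|^{p-2}\phi_t(x_0,t_0)+(-\lap_p)^s\psi_\sigma(x_0,t_0)\le C
$$
for all $\sigma\in(0,\rho]$. Splitting the operator as the (absolutely convergent, by Remark \ref{welldef} and the $C^2$-touching, which gives the one-sided bound $v(x_0+y,t_0)+v(x_0-y,t_0)-2v(x_0,t_0)\le\phi(x_0+y,t_0)+\phi(x_0-y,t_0)-2\phi(x_0,t_0)\le C|y|^2$) integral over $B_\sigma(x_0)$ plus the integral over the complement, monotone convergence as $\sigma\downarrow 0$ shows that
$$
(-\lap_p)^s\psi_\sigma(x_0,t_0)\longrightarrow (-\lap_p)^s v(x_0,t_0),
$$
the limit being finite precisely because it is bounded below along the way (it is bounded by $C-|\phi_t|^{p-2}\phi_t$) and the integrands increase. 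Hence $(-\lap_p)^s v(x_0,t_0)$ is defined pointwise and satisfies $|\phi_t(x_0,t_0)|^{p-2}\phi_t(x_0,t_0)+(-\lap_p)^s v(x_0,t_0)\le C$. Finally, letting $\e\to 0$ removes the auxiliary perturbation.

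The main obstacle I anticipate is the justification that $\psi_\sigma$ remains a legitimate test function and that the tail of the integral (over $\R^n\setminus B_\sigma(x_0)$) behaves well under the limit: one must be careful that $v$ is only upper semicontinuous, so the outer integral involves $\mathcal J_p(v(x_0,t_0)-v(x_0+y,t_0))$ which need not be continuous, but it \emph{is} dominated (since $v\in L^\infty$ and $|y|^{-n-sp}$ is integrable away from the origin) so dominated/monotone convergence applies and the value is unambiguous. The delicate bookkeeping — matching the one-sided quadratic bound near $x_0$ coming from the $C^2$ touching with $p\ge 2$, exactly as in Remark \ref{welldef} — is what makes the inner integral converge and makes the monotone passage to the limit legitimate; this is the heart of the argument and is where I would spend the most care, following \cite{CS09} and \cite{Lin14}.
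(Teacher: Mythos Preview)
Your proposal is correct and follows essentially the same route as the paper: define the truncated test functions $\psi_\sigma=\phi^\sigma$, use the viscosity inequality for each $\sigma$, exploit the pointwise monotonicity $\delta(\phi^{\sigma_2})\le\delta(\phi^{\sigma_1})\le\delta(v)$ of the symmetrized integrand, and pass to the limit $\sigma\downarrow 0$ by monotone convergence (the paper makes the integrable lower bound explicit by splitting into $\delta^\pm$ and noting $\delta^-(v)\le|\delta(\phi^r)|$, which is the clean way to justify your ``bounded below'' step). The initial $\varepsilon$-perturbation to a strict maximum is unnecessary here, since the viscosity definition already allows non-strict touching and each $\phi^\sigma$ is an admissible test function as is.
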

\begin{proof} For $0<\rho \leq r$, let
 $$
 \phi^\rho=\left\{\begin{array}{lr} \phi \text{ in }B_\rho(x_0)\times (t_0-r,t_0+r),\\
v\text{ in }\R^n\setminus B_\rho(x_0)\times (t_0-r,t_0+r).
 \end{array}\right.
 $$
 Since $v$ is a viscosity subsolution,
 $$
|\phi_t(x_0,t_0) |^{p-2}\phi_t(x_0,t_0)+(-\lap_p)^s\phi^\rho\, (x_0,t_0)\leq C.
$$
 \par Now introduce the notation 
 \begin{align*}
\delta(\phi^\rho, x,y,t):=&\frac{1}{2}|\phi^\rho(x,t)-\phi^\rho(x+y,t)|^{p-2}(\phi^\rho(x,t)-\phi^\rho(x+y,t))\\&+\frac{1}{2}|\phi^\rho(x,t)-\phi^\rho(x-y,t)|^{p-2}(\phi^\rho(x,t)-\phi^\rho(x-y,t)),
\end{align*}
$$
 \delta^\pm(\phi^\rho,x,y,t) = \max(\pm \delta(\phi^\rho,x,y,t),0).
$$
Since $\phi^\rho$ is $C^2$ in space near $x_0$, we can substitute $-y$ for $y$ in the integral and obtain the convergent integral
\begin{equation}\label{eq:deltasubsol}
2 \int_{\R^n}\delta(\phi^\rho,x_0,y,t_0)|y|^{-n-sp}\, dy\leq C-|\phi_t(x_0,t_0) |^{p-2}\phi_t(x_0,t_0):=D
\end{equation}
See Remark \ref{welldef} for more details.
%Indeed, if $f(x)$ is a $C^2$ function, then 
%$$
%|f(x)-f(x+y)|^{p-2}(f(x)-f(x+y))+|f(x)-f(x-y)|^{p-2}(f(x)-f(x-y))=\mathcal{O}(|y|%^p), 
%$$
%when $p\geq 2$.
\par We note that 
$$\delta(\phi^{\rho_2},x_0,y,t_0)\leq \delta(\phi^{\rho_1},x_0,y,t_0)\leq \delta(v,x_0,y,t_0)\text{ for }\rho_1< \rho_2< r,
$$
so that 
\begin{equation}\label{deltaminus}
\delta^-(\phi^{\rho_2},x_0,y,t_0)\geq \delta^-(\phi^{\rho_1},x_0,y,t_0)\geq \delta^-(v,x_0,y,t_0)\text{ for }\rho_1< \rho_2< r.
\end{equation}
In particular, 
$$
\delta^-(v,x_0,y,t_0)\leq |\delta(\phi^r,x_0,y,t_0)|.
$$
Since $|\delta(\phi^r,x_0,y,t_0)|y|^{-n-sp}|$ is integrable, so is $\delta^-(v,x_0,y,t_0)|y|^{-n-sp}$. In addition, by \eqref{eq:deltasubsol}
$$
2\int_{\R^n}\delta^+(\phi^\rho,x_0,y,t_0)|y|^{-n-sp}\, dy\leq 2\int_{\R^n}\delta^-(\phi^\rho,x_0,y,t_0)|y|^{-n-sp}\, dy+D.
$$
Thus, for $\rho_1<\rho_2$
\begin{align}\label{eq:srineq}
2\int_{\R^n}\delta^+(\phi^{\rho_1},x_0,y,t_0)|y|^{-n-sp}\, dy&\leq2\int_{\R^n}\delta^-(\phi^{\rho_1},x_0,y,t_0)|y|^{-n-sp}\, dy+D\\
&\leq 2\int_{\R^n}\delta^-(\phi^{\rho_2},x_0,y,t_0)|y|^{-n-sp}\, dy+D<\infty,\nonumber
\end{align}
where we have used \eqref{deltaminus}. 

\par Since $\delta^+(\phi^\rho,x_0,y,t_0)\nearrow \delta^+(v,x_0,y,t_0)$, the monotone convergence theorem implies
$$
\int_{\R^n}\delta^+(\phi^\rho,x_0,y,t_0)|y|^{-n-sp}\, dy \to \int_{\R^n}\delta^+(v,x_0,y,t_0)|y|^{-n-sp}\, dy.
$$
By \eqref{eq:srineq}
\begin{equation}\label{eq:deltaplus}
2\int_{\R^n}\delta^+(v,x_0,y,t_0)|y|^{-n-sp}\, dy\leq 2\int_{\R^n}\delta^-(\phi^\rho,x_0,y,t_0)|y|^{-n-sp}\, dy+D<\infty,
\end{equation}
for any $0<\rho<r$. We conclude that $\delta^+(v,x_0,y,t_0)|y|^{-n-sp}$ is integrable. By the dominated convergence theorem, we can pass to the limit in the right hand side of \eqref{eq:deltaplus} and obtain
$$
2\int_{\R^n}\delta^+(v,x_0,y,t_0)|y|^{-n-sp}\, dy\leq 2\int_{\R^n}\delta^-(v,x_0,y,t_0)|y|^{-n-sp}\, dy+D<\infty.
$$
This is simply another way of writing
$$
2\int_{\R^n}\delta (v,x_0,y,t_0)|y|^{-n-sp}\, dy\leq D.
$$
Therefore $(-\lap_p)^s v\,(x_0,t_0)$ exists in the pointwise sense and $(-\lap_p)^s v\, (x_0,t_0)\leq D$, which concludes the proof as $D=C-|\phi_t(x_0,t_0) |^{p-2}\phi_t(x_0,t_0)$.
\end{proof}

\begin{prop}\label{prop:comparison} Assume that $u$ is a viscosity subsolution and that $v$ is a viscosity supersolution of
$$
|v_t|^{p-2}v_t+(-\Delta_p)^s v=0, \quad \text{in }\Omega\times (0,T).
$$
Suppose $u,v\in L^\infty(\R^n\times[0,T])$, $u\le v$ in $\R^n\setminus\Omega\times [0,T]$ and
$$
\limsup_{(x,t)\to (x_0,t_0)} u(x_0,t_0)\leq \liminf_{(x,t)\to (x_0,t_0)} v(x_0,t_0), \quad \text{for }(x_0,t_0)\in \partial \Omega \times (0,T)\cup \Omega\times \{0\}.
$$
Then $u\leq v$.
\end{prop}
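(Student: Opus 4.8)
The plan is to argue by contradiction using the classical doubling-of-variables technique for viscosity solutions, exploiting two structural features: the strict monotonicity of $\mathcal{J}_p(r)=|r|^{p-2}r$, which will control the time-derivative term, and Proposition \ref{pw}, which lets us evaluate $(-\lap_p)^s$ pointwise along the doubled maximizers. \textbf{Reduction.} Suppose $u\le v$ fails. Since $u$ is upper semicontinuous, $v$ lower semicontinuous, $u\le v$ outside the bounded set $\Omega$, and the one-sided boundary/initial inequalities hold, for $\eta>0$ small the quantity $M_\eta:=\sup_{\R^n\times[0,T)}\bigl(u-v-\tfrac{\eta}{T-t}\bigr)$ is still positive, and — because the barrier kills $t$ near $T$ and $u-v\le0$ off $\Omega$ — it is attained on $\Omega\times(0,T)$; the set of maximizers is a compact subset of $\Omega\times(0,T)$, so it is at distance $\ge4\rho$ from $\partial\Omega\cup\{t=0\}$ for some fixed $\rho>0$. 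Fix such an $\eta$.

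\textbf{Doubling in space and in time.} For $\e>0$ and small $\beta>0$ maximize
$$
\Psi_{\e,\beta}(x,y,t,s)=u(x,t)-v(y,s)-\frac{|x-y|^2}{2\e}-\frac{(t-s)^2}{2\e}-\frac{\eta}{T-t}-\beta\bigl(\xi(x)+\xi(y)\bigr),
$$
where $\xi\in C^\infty(\R^n)$, $\xi\ge0$, $\xi\equiv0$ on $\overline\Omega$, $\xi(x)\to\infty$ as $|x|\to\infty$, with $\nabla\xi$ bounded; the term $\beta\xi$ only makes the supremum attained and is unnecessary when $u,v$ vanish off $\Omega$ (the case relevant for \eqref{pParabolic}). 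At a maximizer $(x_\e,y_\e,t_\e,s_\e)$ one has the usual estimates $\tfrac{|x_\e-y_\e|^2}{\e},\tfrac{|t_\e-s_\e|^2}{\e}\to0$, and, letting $\e\to0$, the maximizers cluster at a maximum point of $u-v-\tfrac{\eta}{T-t}$, hence lie in $\Omega\times(0,T)$ with $B_\rho(x_\e),B_\rho(y_\e)\subset\Omega$ once $\e$ is small. It is essential that we double the time variable: freezing $(y,s)=(y_\e,s_\e)$, the function $u$ is touched from above at $(x_\e,t_\e)$ by the genuinely $C^{2,1}_{x,t}$ function obtained by moving the penalization, barrier and $\beta\xi(x)$ terms to the other side (the frozen $v$-value being a constant), and symmetrically $v$ is touched from below at $(y_\e,s_\e)$. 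Applying Proposition \ref{pw} to $u$ and its symmetric counterpart to the supersolution $v$ shows $(-\lap_p)^s u(x_\e,t_\e)$ and $(-\lap_p)^s v(y_\e,s_\e)$ are defined pointwise (as absolutely convergent integrals) and
$$
\mathcal{J}_p\!\Bigl(a+\tfrac{\eta}{(T-t_\e)^2}\Bigr)+(-\lap_p)^s u(x_\e,t_\e)\le0,\qquad \mathcal{J}_p(a)+(-\lap_p)^s v(y_\e,s_\e)\ge0,
$$
with $a=(t_\e-s_\e)/\e$.

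\textbf{Comparing the nonlocal terms and closing.} Testing $\Psi_{\e,\beta}(x_\e\pm z,y_\e\pm z,t_\e,s_\e)\le\Psi_{\e,\beta}(x_\e,y_\e,t_\e,s_\e)$ — legitimate since the penalization depends only on $x-y$ and so cancels under a common shift — gives, for all $z$,
$$
u(x_\e\pm z,t_\e)-u(x_\e,t_\e)\le v(y_\e\pm z,s_\e)-v(y_\e,s_\e)+\beta\bigl(\xi(x_\e\pm z)+\xi(y_\e\pm z)\bigr),
$$
and the last term vanishes for $|z|<\rho$ (because $\xi\equiv0$ on $\overline\Omega$) and is $\le C\beta|z|$ on $\rho\le|z|\le R$. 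Since $\mathcal{J}_p$ is nondecreasing, the kernel $|z|^{-n-sp}$ is positive, and the principal values are absolutely convergent by the integrability conclusions of Proposition \ref{pw}, splitting the integral at radii $\rho$ and $R$ and using $\|u\|_\infty,\|v\|_\infty\le M$ beyond $R$ yields $(-\lap_p)^s u(x_\e,t_\e)-(-\lap_p)^s v(y_\e,s_\e)\ge-\sigma_1(\beta,R)-\sigma_2(R)$, where $\sigma_2(R)\to0$ as $R\to\infty$ (from $\int_{|z|>R}|z|^{-n-sp}\,dz\to0$) and $\sigma_1(\beta,R)\to0$ as $\beta\to0$ for fixed $R$. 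Subtracting the two viscosity inequalities and inserting this bound gives
$$
\mathcal{J}_p\!\Bigl(a+\tfrac{\eta}{(T-t_\e)^2}\Bigr)-\mathcal{J}_p(a)\le\sigma_1(\beta,R)+\sigma_2(R).
$$
Because $p\ge2$, $\mathcal{J}_p$ is strictly increasing with $\mathcal{J}_p(a+c)-\mathcal{J}_p(a)$ continuous, positive, and bounded below away from $0$ uniformly in $a$ for each fixed $c>0$; since $\tfrac{\eta}{(T-t_\e)^2}\ge\tfrac{\eta}{T^2}>0$, the left side is $\ge\delta:=\inf_{a\in\R}\bigl[\mathcal{J}_p(a+\tfrac{\eta}{T^2})-\mathcal{J}_p(a)\bigr]>0$. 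Choosing $R$ large and then $\beta$ small so that $\sigma_1+\sigma_2<\delta$ (and finally $\e$ small) produces the contradiction, so $u\le v$.

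\textbf{Main obstacle.} The delicate point is controlling the error from the nonlocal operator in the doubling: the penalization Hessian blows up like $1/\e$, so one must not try to push it through $\mathcal{J}_p$, but instead argue directly from the pointwise increment inequality, which is exact near the singularity (the localizer being inert on $\overline\Omega$), only mildly polluted on an intermediate annulus, and controlled by the bare $L^\infty$ bounds in the tail. Proposition \ref{pw} is what makes $(-\lap_p)^s u$ and $(-\lap_p)^s v$ honest absolutely convergent integrals, without which these estimates could not be made; everything else (attainment of the doubled maximum via the localizer $\beta\xi$, the penalization estimates, location of the maximizers in $\Omega\times(0,T)$) is routine.
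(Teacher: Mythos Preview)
Your proof is correct and follows essentially the same approach as the paper: doubling of variables in space and time, Proposition~\ref{pw} to evaluate $(-\lap_p)^s$ pointwise at the doubled maximizers, and the translation trick $(x,y)\mapsto(x+z,y+z)$ to compare the two nonlocal terms. The only notable difference is that you introduce a spatial localizer $\beta\xi$ to guarantee attainment of the doubled supremum (at the cost of error terms $\sigma_1,\sigma_2$ and the need for the uniform lower bound $\inf_a[\mathcal{J}_p(a+c)-\mathcal{J}_p(a)]>0$, valid for $p\ge2$), whereas the paper handles attainment via approximate maximizers and Proposition~3.7 of \cite{User}, thereby obtaining an \emph{exact} translation inequality and an immediate contradiction $(-\lap_p)^s\tilde u(x_\e,t_\e)<(-\lap_p)^s v(y_\e,\tau_\e)$ versus $\geq$.
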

\begin{proof}
We employ the usual trick of adding a term $\frac{\delta}{t-T}$; let $\tilde u =u+\frac{\delta}{t-T}$. Then $v$ is a supersolution, $\tilde u$ is a subsolution of
$$
|v_t|^{p-2}v_t+(-\Delta_p)^s v=-\frac{\delta}{(t-T)^2},
$$
$\tilde u< v$ in $\R^n\setminus\Omega\times [0,T]$ and 
\begin{equation}\label{OrderingUtild}
\limsup_{(x,t)\to (x_0,t_0)} \tilde u(x_0,t_0)< \liminf_{(x,t)\to (x_0,t_0)} v(x_0,t_0),
\end{equation}
for $(x_0,t_0)\in \partial \Omega \times (0,T)\cup \Omega\times \{0\}$. Moreover, $\tilde u(x,t)-v(x,t)\to -\infty$ as $t\to T$. It is now sufficient to prove that $\tilde u\leq v$ for any $\delta>0$ since we can then let $\delta\to 0$. We argue by contradiction and assume that 
$$
\sup_{\R^n\times [0,T]} (\tilde u-v)>0.
$$
\par Fix $\e>0$ and define
%By assumption, the supremum must be attained at a point $(x_0,t_0)\in \Omega\times (0,T)$. 
\begin{align*}
M_\e &:=\sup_{ \R^n\times[0,T]\times \R^n\times [0,T]} \left(\tilde u(x,t)-v(y,\tau)-\frac{|x-y|^2+|t-\tau|^2}{\e}\right).
%&= \left(\tilde u(x_\e,t_\e)-v(y_\e,\tau_\e)-\frac{|x_\e-y_\e|^2+|\tau_\e-t_\e|^2}{\e}\right)\\
\end{align*}
Note $M_\e\ge \sup_{\R^n\times [0,T]} (\tilde u-v)>0$ and select $x_\e,y_\e\in \R^n$ and $t_\e,\tau_\e\in [0,T]$ for which  
$$
M_\e<\tilde u(x_\e,t_\e)-v(y_\e,\tau_\e)-\frac{|x_\e-y_\e|^2+|\tau_\e-t_\e|^2}{\e}+\e.
$$
By Proposition 3.7 in \cite{User}, $(x_\e, t_\e)$ and $(y_\e,\tau_\e)$ each have subsequences converging to $(\hat x,
\hat t)\in \Omega\times(0,T)$ as $\e\rightarrow 0$ for which 
$$
\sup_{\R^n\times [0,T]} (\tilde u-v)=(\tilde u-v)(\hat x,
\hat t).
$$
As a result, there is $\e$ small enough such that $x_\e,y_\e\in \Omega$ and $t_\e,\tau_\e\in (0, T)$.  For this $\e$, it also follows that the maximum $M_\e$ is attained in $\Omega\times(0,T)\times\Omega\times (0,T)$. For convenience, 
we will again call this point $(x_\e, t_\e,y_\e,\tau_\e)$. 
%$$
%M_\e=\tilde u(x_\e,t_\e)-v(y_\e,\tau_\e)-\frac{|x_\e-y_\e|^2+|\tau_\e-t_\e|^2}{\e}+\e
%$$

\par Observe that the function
$$\frac{|x-y_\e|^2+|t-\tau_\e|^2}{\e}+\tilde u(x_\e,t_\e)-\frac{|x_\e-y_\e|^2+|t_\e-\tau_\e|^2}{\e}
$$
touches $\tilde u$ from above at $(x_\e,t_\e)$ and
$$\frac{|x_\e-y|^2+|t_\e-\tau|^2}{\e}-v(y_\e,\tau_\e)-\frac{|x_\e-y_\e|^2+|t_\e-\tau_\e|^2}{\e}
$$
touches $-v$ from above at $(y_\e,\tau_\e)$. From Proposition \ref{pw}, we can conclude that $(-\lap_p)^s \tilde u(x_\e,t_\e)$ and $(-\lap_p)^s v(y_\e,\tau_\e)$ exist pointwise and satisfy
$$
(-\lap_p)^s \tilde u\,(x_\e,t_\e)<(\lap_p)^s v\,(y_\e,\tau_\e).
$$
\par In addition, since the function 
$$
\tilde u(x,t)-v(y,\tau)-\frac{|x-y|^2+|t-\tau|^2}{\e}
$$
is larger at $(x_\e,y_\e,t_\e,\tau_\e)$ than at $(x_\e+y,y_\e+y,t_\e,\tau_\e)$ for any $y$, we obtain 
$$
\tilde u(x_\e,t_\e)-\tilde u(x_\e+y,t_\e)\geq v(y_\e,\tau_\e)-v(y_\e+y,\tau_\e).
$$
This implies
$$
(-\lap_p)^s \tilde u\, (x_\e,t_\e)\geq (-\lap_p)^s \, v(y_\e,\tau_\e), 
$$
which is a contradiction. Therefore, we must have $\tilde u \leq v$.
\end{proof}
Now we present a general result for nonlocal parabolic equations, inspired by previous work of Petri Juutinen in Theorem 1 of \cite{Jut01}. This fact will be important in the proof of H\"older regularity of solutions of \eqref{MainPDE}. 
\begin{prop}\label{prop:jut} Suppose that $v$ is a viscosity subsolution of
$$
|v_t|^{p-2}v_t+(-\lap_p)^s v\,\leq 0
$$
in $B_r(x_0)\times (t_0-r,t_0+r)$ and $\phi\in C^{2,1}_{x,t}({B_r(x_0)}\times (t_0-r,t_0+r))$.  If
\begin{equation}\label{eq:past}
\phi(x_0,t_0)=v(x_0,t_0), \quad \phi(x,t)\geq v(x,t) \text{ for $(x,t)\in B_r(x_0)\times (t_0-r,t_0]$},
\end{equation}
then
%that $\phi$ satisfies \eqref{eq:past}. Then
$$
|\phi_t(x_0,t_0) |^{p-2}\phi_t(x_0,t_0)+(-\lap_p)^s\phi^r\, (x_0,t_0)\leq 0.
$$
\end{prop}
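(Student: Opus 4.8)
The plan is a time-penalization argument in the spirit of Theorem 1 of \cite{Jut01}: I convert the backward touching \eqref{eq:past} into a genuine two-sided touching at a nearby point, feed that into Proposition \ref{pw} (equivalently, into the subsolution property of Definition \ref{paravisc}), and then let the penalty vanish. As a preliminary reduction, replacing $\phi(x,t)$ by $\phi(x,t)+\sigma\big(|x-x_0|^{4}+(t-t_0)^{2}\big)$ — which leaves $\phi_t(x_0,t_0)$ unchanged, preserves $\phi\ge v$ on $B_r(x_0)\times(t_0-r,t_0]$, and changes $\laps\phi^r(x_0,t_0)$ by a quantity tending to $0$ as $\sigma\to0$ by dominated convergence — I may assume that $v-\phi$ attains a \emph{strict} maximum over $\overline{B_\rho(x_0)}\times[t_0-\rho,t_0]$ at $(x_0,t_0)$, for some fixed $\rho\in(0,r)$; the statement for $\phi$ then follows by letting $\sigma\to0$ at the very end.

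Next, for $\e>0$ I set $\psi_\e(x,t):=\phi(x,t)+\e^{-1}(t-t_0)^{2}$ on $B_\rho(x_0)\times(t_0-\rho,t_0+\rho)$ and let $(\hat x_\e,\hat t_\e)$ be a maximum point of $v-\psi_\e$ over $K:=\overline{B_\rho(x_0)}\times[t_0-\rho,t_0+\rho/2]$ (one exists since $v$ is bounded and upper semicontinuous). Since $v-\phi\le0$ for $t\le t_0$ the maximum cannot be at a time $t<t_0$, so $\hat t_\e\ge t_0$; since $v-\phi$ is bounded above on $K$ the quadratic penalty forces $\hat t_\e-t_0\le C\sqrt{\e}$, hence $\hat t_\e\to t_0$. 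From $(v-\psi_\e)(\hat x_\e,\hat t_\e)\ge(v-\psi_\e)(x_0,t_0)=0$, the upper semicontinuity of $v$ and the strictness of the maximum one deduces $\hat x_\e\to x_0$; the same inequality gives $\phi(\hat x_\e,\hat t_\e)\le v(\hat x_\e,\hat t_\e)\le\|v\|_{L^{\infty}}$, and upper semicontinuity then yields $v(\hat x_\e,\hat t_\e)\to v(x_0,t_0)=\phi(x_0,t_0)$. Consequently, for $\e$ small $(\hat x_\e,\hat t_\e)$ is interior to $K$, and $\psi_\e$ — shifted upward by the nonnegative constant $(v-\psi_\e)(\hat x_\e,\hat t_\e)$ — is an admissible $C^{2,1}_{x,t}$ test function touching $v$ from above at $(\hat x_\e,\hat t_\e)$ in a full two-sided neighborhood.

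By Proposition \ref{pw}, $\laps v(\hat x_\e,\hat t_\e)$ exists pointwise and
\[
\J\big(\partial_t\phi(\hat x_\e,\hat t_\e)+2\e^{-1}(\hat t_\e-t_0)\big)+\laps v(\hat x_\e,\hat t_\e)\le0 ;
\]
since $\hat t_\e\ge t_0$ and $\J$ is nondecreasing, the penalty inside $\J$ may be discarded, leaving $\J(\partial_t\phi(\hat x_\e,\hat t_\e))+\laps v(\hat x_\e,\hat t_\e)\le0$. Now I let $\e\to0$. The first term tends to $\J(\phi_t(x_0,t_0))$. For the second, fix $\eta\in(0,\rho)$ and split $\laps v(\hat x_\e,\hat t_\e)$ into its principal value over $|y|<\eta$ and the absolutely convergent remainder over $|y|\ge\eta$. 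On $|y|<\eta$, the maximality of $v-\psi_\e$ over $B_\rho(x_0)$ gives $v(\hat x_\e,\hat t_\e)-v(\hat x_\e\pm y,\hat t_\e)\ge\phi(\hat x_\e,\hat t_\e)-\phi(\hat x_\e\pm y,\hat t_\e)$, so the symmetrized integrand of $v$ dominates that of $\phi$, which is $O(|y|^{p})$ by Remark \ref{welldef}; dominated convergence sends this part to the $|y|<\eta$ part of $\laps\phi^r(x_0,t_0)$. On $|y|\ge\eta$, Fatou's lemma together with $v(\hat x_\e,\hat t_\e)\to v(x_0,t_0)$, the upper semicontinuity of $v$, and the monotonicity of $\J$ gives
\[
\liminf_{\e\to0}\int_{|y|\ge\eta}\J\big(v(\hat x_\e,\hat t_\e)-v(\hat x_\e+y,\hat t_\e)\big)\frac{dy}{|y|^{n+sp}}\ \ge\ \int_{|y|\ge\eta}\J\big(v(x_0,t_0)-v(x_0+y,t_0)\big)\frac{dy}{|y|^{n+sp}} ,
\]
and since $\phi(x_0+y,t_0)\ge v(x_0+y,t_0)$ for $|y|<r$ while $\phi^r(x_0+y,t_0)=v(x_0+y,t_0)$ for $|y|\ge r$, the right-hand side is at least the $|y|\ge\eta$ part of $\laps\phi^r(x_0,t_0)$. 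Adding the two pieces, $\liminf_{\e\to0}\laps v(\hat x_\e,\hat t_\e)\ge\laps\phi^r(x_0,t_0)$, whence $\J(\phi_t(x_0,t_0))+\laps\phi^r(x_0,t_0)\le0$, which is the claim.

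I expect the main obstacle to be exactly the limit in the nonlocal term, and in particular its tail: there the upper semicontinuity of $v$ must be used in an essential way — once to obtain $\lim_{\e\to0}v(\hat x_\e,\hat t_\e)=v(x_0,t_0)$, and once (through Fatou) to lower $v(\hat x_\e+y,\hat t_\e)$ to $v(x_0+y,t_0)$ — after which the backward touching $\phi(\cdot,t_0)\ge v(\cdot,t_0)$ on $B_r(x_0)$ is precisely what identifies the limiting tail with that of $\laps\phi^r$. The remaining ingredients (locating and passing to the limit in $(\hat x_\e,\hat t_\e)$, the symmetrization near the singularity, the removal of the strictifying perturbation) are routine and follow the pattern of \cite{CS09}, \cite{Jut01} and Remark \ref{welldef}.
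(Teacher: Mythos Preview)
Your argument is essentially correct but follows a genuinely different route from the paper's. The paper argues by contradiction and comparison: if the conclusion failed by a margin $\e>0$ at $(x_0,t_0)$, then (after a continuity step) the perturbation $\phi_\delta:=\phi+\delta\eta-\delta$, with $\eta$ a smooth bump vanishing only at $(x_0,t_0)$ and $\ge1$ outside a small backward cylinder, is a strict classical supersolution in that cylinder and dominates $v$ on its parabolic boundary; Proposition~\ref{prop:comparison} then forces $(\phi_\delta)^r\ge v$, contradicting $(\phi_\delta)^r(x_0,t_0)=v(x_0,t_0)-\delta$. You instead penalize in time by $\psi_\e=\phi+\e^{-1}(t-t_0)^2$, which produces a genuine two-sided touching at $(\hat x_\e,\hat t_\e)$ with $\hat t_\e\ge t_0$, invoke Proposition~\ref{pw} there, discard the nonnegative penalty via the monotonicity of $\J$, and let $\e\to0$. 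The strictification, the localization $(\hat x_\e,\hat t_\e)\to(x_0,t_0)$ with $v(\hat x_\e,\hat t_\e)\to v(x_0,t_0)$, and the near-singularity bound via the smooth symmetrized $\phi$-integrand are all handled correctly. What the paper's comparison-based proof buys is that no limit in the nonlocal operator is ever taken; what yours buys is that the comparison principle is never invoked.

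One point to flag in your tail Fatou step: to obtain $\limsup_\e v(\hat x_\e+y,\hat t_\e)\le v(x_0+y,t_0)$ you appeal to upper semicontinuity of $v$, but Definition~\ref{paravisc} only guarantees this inside $B_r(x_0)\times(t_0-r,t_0+r)$; for $x_0+y\notin B_r(x_0)$ (and typically $\hat t_\e>t_0$) the function $v$ is merely $L^\infty$, so the pointwise liminf bound for Fatou is not available there. The paper's ``by continuity'' step conceals the very same issue (stability in $t$ of the tail of $\laps\phi^r$ is needed to propagate the strict inequality into a backward cylinder), so both arguments implicitly use some $t$-regularity of $v$ outside $B_r(x_0)$. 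This is harmless in the only application of Proposition~\ref{prop:jut} in the paper (Lemma~\ref{lem:key}, where $v$ is a continuous viscosity solution on a larger set), but as the proposition is stated your Fatou step, and the paper's continuity step, are both somewhat informal on this point.
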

\begin{proof} We argue by contradiction. If the assertion is not true then
$$
|\phi_t(x_0,t_0) |^{p-2}\phi_t(x_0,t_0)+(-\lap_p)^s\phi^r\, (x_0,t_0)\geq\e>0
$$
for some $\e$. Recall $\phi^r$ is defined in Definition \ref{paravisc}. By continuity, we have
$$
|\phi_t |^{p-2}\phi_t+(-\lap_p)^s\phi^r \geq\e/2>0
$$
in $B_\rho(x_0)\times (t_0-\rho,t_0)$ for $\rho$ small enough. 

\par Let $\eta:\R^{n+1}\to \R$ be a smooth function satisfying  
$$
\begin{cases}
0\leq \eta\leq 1,\\
\eta(x_0,t_0)=0,\\
\eta(x,t)>0 \text{ if $(x,t)\neq (x_0,t_0)$},\\
\eta(x,t)\ge1 \text{ if $(x,t)\not\in B_{\rho}(x_0)\times (t_0-\rho,t_0)$}.
\end{cases}
$$
Also define
$$ \phi_\delta(x,t)=\phi(x,t)+\delta\eta(x,t)-\delta,
$$
where $\delta>0$ is considered small. By continuity, 
$$
|(\phi_\delta)_t |^{p-2}(\phi_\delta)_t+(-\lap_p)^s(\phi_\delta)^r \geq\e/4>0
$$
in $B_{\rho}(x_0)\times (t_0-\rho,t_0)$, provided $\delta$ is small enough.

This means that $(\phi_\delta)^r$ is a supersolution in the pointwise classical sense in $B_{\rho}(x_0)\times (t_0-\rho,t_0)$, and in particular it means that $(\phi_\delta)^r$ is a viscosity supersolution in this region. Moreover, $(\phi_\delta)^r\ge\phi^r\geq v$ in the complement of 
$$
\R^n\setminus{B_{\rho}(x_0)}\times (t_0-\rho,t_0)\cup B_{\rho}(x_0)\times \{t_0-\rho\}.
$$
By Proposition \ref{prop:comparison}, $(\phi_\delta)^r\geq v$ in $\R^n\times [t_0-\rho,t_0]$. Furthermore, $(\phi_\delta)^r(x_0,t_0)\geq v(x_0,t_0)$ which is a contradiction since $(\phi_\delta)^r(x_0,t_0)=\phi(x_0,t_0)-\delta=v(x_0,t_0)-\delta$.
\end{proof}

\par Let us now return to our study of the implicit time scheme. We are now in position to construct barriers that assure that 
$\overline{v}$ and $\underline{v}$ satisfy the correct boundary and initial conditions.
\begin{lem}\label{lem:BC}
Assume that $-\Psi\leq g\leq \Psi$ where $\Psi$ is a non-negative ground state of $\laps$. Then $\overline v$ and  $\underline v$ satisfy the boundary condition in the classical sense, i.e., 
$$
\lim_{y\to x} \overline v(y,t)=\lim_{y\to x} \underline v(y,t)=0, \quad \text{for any $x\in \partial \Omega$ and any $t\geq 0$}.
$$
\end{lem}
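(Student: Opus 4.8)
The plan is to construct explicit barriers out of the ground state $\Psi$ and use the discrete comparison principle available for the scheme \eqref{ViscScheme}, then pass to the weak limits $\overline v$, $\underline v$. Since $|g| \le \Psi$ and by Corollary \ref{cor:icgroundstate} the weak solution with initial data $\Psi$ is $e^{-\mu_{s,p}t}\Psi$, which vanishes continuously at $\partial\Omega$ because $\Psi \in C(\overline\Omega)$ vanishes there, one already gets the right behavior in the \emph{weak} sense; the issue is to upgrade this to a \emph{pointwise} limit along the approximating step functions $v_N$ uniformly in $N$. So first I would record the comparison principle for the implicit scheme: if $\{\psi^k\}$ is a supersolution sequence of \eqref{ViscScheme} (in the weak or viscosity sense) with $\psi^k \ge v^k$ on $\R^n \setminus \Omega$, then $\psi^k \ge v^k$ everywhere, by the argument already cited in Lemma \ref{SchemeVISC} (strict monotonicity of $\mathcal{J}_p$). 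The symmetric statement holds for subsolutions.

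Next I would build the barrier. Since $\partial\Omega$ is $C^{1,1}$, fix $x_0 \in \partial\Omega$ and $\e > 0$; using continuity of $\Psi$ at $\partial\Omega$ and boundedness of $g$, I want a supersolution $\overline\psi$ of the scheme with $\overline\psi \ge g$ on $\R^n\setminus\Omega$ (trivially, since $g = 0 = $ value there, and actually one needs $\overline\psi \ge$ the exterior data $0$) and $\overline\psi(x_0) \le \e$, while $\overline\psi \ge |g|$ everywhere in $\Omega$ so that it dominates the initial datum. A natural candidate is $\overline\psi = \e + C \Psi$ restricted appropriately, or better, a localized barrier: combine the ground state with a function that is large away from $x_0$ but small near $x_0$. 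Concretely, because the scheme at each step is essentially a fractional $p$-Laplace equation with bounded right-hand side, one can use the boundary regularity/barrier constructed in \cite{IMS15} (the continuity result already invoked) to produce, for each $\e$, a continuous supersolution $h_\e$ of $(-\lap_p)^s h \ge 0$ in $\Omega$ (hence of the time-discrete scheme, since the $\mathcal{J}_p$ term applied to a $t$-independent function is zero) with $h_\e \ge \|g\|_\infty$ on $\R^n \setminus \Omega$ close to $\Omega$, $h_\e \ge $ bound far away, and $h_\e(x_0) \le \e$. Then $h_\e$ is a stationary supersolution of \eqref{ViscScheme}, so by induction and the discrete comparison principle $v^k \le h_\e$ for all $k$, hence $v_N \le h_\e$ for all $N$, and passing to the weak upper limit $\overline v \le h_\e$ near $x_0$; letting $y \to x_0$ and then $\e \to 0$ gives $\limsup_{y\to x_0}\overline v(y,t) \le 0$. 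Applying the same to $-v_N$ with $-h_\e$ (or using $-\Psi \le g$) gives $\liminf \underline v \ge 0$, and since $\underline v \le \overline v$ this forces both limits to be $0$.

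A cleaner alternative that avoids re-deriving barriers: take the stationary supersolution directly from the ground state. Since $\Psi > 0$ in $\Omega$ solves $\laps\Psi = \lambda_{s,p}\Psi^{p-1} \ge 0$ there, $\Psi$ is a (weak, hence viscosity) supersolution of $(-\lap_p)^s u \ge 0$ in $\Omega$; moreover $t$-independence makes it a supersolution of the discrete scheme. But $\Psi$ alone only gives $\overline v \le \Psi$, which vanishes at $\partial\Omega$ — that is \emph{already} the boundary condition, provided we know $\Psi \in C(\overline\Omega)$ with $\Psi|_{\partial\Omega}=0$, which holds for $C^{1,1}$ domains by the boundary regularity for the eigenfunction (again \cite{IMS15} or \cite{BP14}). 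Symmetrically $\underline v \ge -\Psi$. Hence $-\Psi(y) \le \underline v(y,t) \le \overline v(y,t) \le \Psi(y)$ for $y$ near $\partial\Omega$, and continuity of $\Psi$ up to the boundary with zero boundary value yields the claim immediately. This is essentially Corollary \ref{cor:icgroundstate} combined with comparison.

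The main obstacle is the comparison step at the discrete level: one must be sure that $v^k$, which solves \eqref{ViscScheme} with exterior data $0$ (not with exterior data equal to the barrier), can be compared with a function whose exterior data is only $\ge 0$. This is fine because the comparison principle for $\laps$ only requires the sub/supersolution ordering \emph{outside} $\Omega$, and $\Psi \ge 0 = v^k$ on $\R^n \setminus \Omega$; but one should state this carefully, since the nonlocal operator sees all of $\R^n$. The induction then needs $v^{k-1} \le \Psi$ to feed the $\mathcal{J}_p\big((v^k - v^{k-1})/\tau\big)$ term monotonically, exactly as in Lemma \ref{SchemeVISC}. A secondary technical point is the interchange of the weak upper/lower limits with the pointwise inequality $v_N \le \Psi$, which is immediate since $\Psi$ is continuous and $N$-independent. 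I do not expect any genuinely hard estimate here; the content is organizational, assembling the discrete comparison principle, the sign/boundary properties of $\Psi$, and the definitions of $\overline v, \underline v$.
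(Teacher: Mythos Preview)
Your proposal is correct and, in its ``cleaner alternative,'' takes essentially the same approach as the paper: show by induction that $|v^k|\le\Psi$ for every step of the discrete scheme using the comparison principle (which works because $(-\lap_p)^s\Psi=\lambda_{s,p}\Psi^{p-1}\ge 0$ and $v^{k-1}\le\Psi$ feeds the $\mathcal J_p$ term), deduce $|v_N|\le\Psi$, and conclude from the continuity of $\Psi$ up to $\partial\Omega$ (via \cite{IMS15}) that $\overline v$ and $\underline v$ vanish at the boundary. Your initial detour through localized barriers $h_\e$ is unnecessary but not wrong; the paper goes straight to $\Psi$ as the barrier, exactly as you do in your second paragraph.
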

\begin{proof} We observe that
$$
-\laps \Psi-\frac{|\Psi-g|^{p-2}(\Psi-g)}{\tau^{p-1}}=-\lambda_{s,p}\Psi^{p-1}-\frac{(\Psi-g)^{p-1}}{\tau^{p-1}}\leq 0.
$$
Hence $\Psi$ is a supersolution of \eqref{ViscScheme}. Since $\Psi=v^1=0$ in $\R^n\setminus  \Omega$, the comparison principle implies
$$
v^1\leq \Psi.
$$
We can argue similarly to obtain 
$$
|v^1|\leq \Psi.
$$
Iterating this method for each $v^k$ yields $|v^k|\leq \Psi$ for any $k=1,\dots, N$. By the definition of $v_N$ in \eqref{StepApprox},
\begin{equation}\label{veeNEstim}
|v_N|\leq \Psi.
\end{equation}
\par By inequality \eqref{veeNEstim}, the assertion would follow as long as $\Psi$ is continuous up to the boundary. 
To establish this continuity, we first note that $\Psi$ is globally bounded. This fact is due to Theorem 3.2 in \cite{FP14}, 
Theorem 3.3 in \cite{BLP14} or Theorem 3.1 together with Remark 3.2 in \cite{BP14}. Theorem 1.1 in \cite{IMS15}
can now be used to establish the desired continuity of $\Psi$.  
\end{proof}

\begin{lem}\label{lem:IC} Assume $g$ is continuous. Then $\overline v$ and  $\underline v$ satisfy the initial condition in the classical sense, i.e., 
$$
\lim_{t\to 0} \overline v(x,t)=\lim_{t\to 0}\underline v (x,t)=g(x), \quad \text{for any $x\in \Omega$}.
$$
\end{lem}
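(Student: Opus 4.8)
The plan is to construct explicit sub- and supersolutions of the discrete scheme \eqref{ViscScheme} that pin down the initial value, in the spirit of the boundary barrier construction in Lemma \ref{lem:BC}. Fix $x_0 \in \Omega$ and $\varepsilon > 0$. Using continuity of $g$ at $x_0$, choose $\rho > 0$ small enough that $|g(x) - g(x_0)| \le \varepsilon$ for $x \in B_\rho(x_0) \subset \Omega$. I would then seek a function of the form
$$
\bar\psi(x) := g(x_0) + \varepsilon + A\bigl(\chi_{B_\rho(x_0)^c}(x) \cdot (\text{something growing}) \bigr),
$$
more precisely a bounded $C^2$ function (or $C^{1,1}$, enough for Remark \ref{welldef} when $p \ge 2$) that equals $g(x_0) + \varepsilon$ at $x_0$, dominates $g$ everywhere (using $|g| \le \Psi$ and $\Psi$ bounded to handle the region outside $B_\rho(x_0)$ by taking $A$ large), and has $(-\lap_p)^s \bar\psi$ bounded above on a small cylinder. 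The point of making $\bar\psi$ strictly exceed $g$ outside $B_\rho(x_0)$ is that $(-\lap_p)^s \bar\psi(x_0)$ is then controlled: the nonlocal tail contributes a bounded negative amount plus the local $C^2$ part contributes something $O(1)$.

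Next I would run the scheme against this barrier. Since $\bar\psi$ is a bounded function with $(-\lap_p)^s\bar\psi \le M$ near $x_0$, the function $\bar\psi(x) + Mt$ — or rather its discrete analogue $\bar\psi + M\tau_k$ — is a supersolution of \eqref{ViscScheme}: indeed $\mathcal{J}_p\bigl((\bar\psi + M\tau_k - (\bar\psi + M\tau_{k-1}))/\tau\bigr) + (-\lap_p)^s \bar\psi = \mathcal{J}_p(M) + (-\lap_p)^s\bar\psi \ge 0$ once $M$ is chosen large enough (here I use $p \ge 2$ so $\mathcal{J}_p(M) = M^{p-1}$ can absorb the bounded term $-(-\lap_p)^s\bar\psi$; one may need $M$ to depend on $\varepsilon$ through $\rho$). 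Since $\bar\psi + M\tau_0 = \bar\psi \ge g = v^0$ everywhere and $\bar\psi \ge 0 = v^k$ outside $\Omega$, the comparison principle for the scheme (as in Lemma \ref{SchemeVISC}, Lemma \ref{lem:BC}) gives $v^k(x) \le \bar\psi(x) + M\tau_k$ for all $k$, hence $v_N(x,t) \le \bar\psi(x) + Mt$ for all $N$. Passing to the $\limsup$, $\overline v(x_0, t) \le \bar\psi(x_0) + Mt = g(x_0) + \varepsilon + Mt$, so $\limsup_{t\to 0}\overline v(x_0,t) \le g(x_0) + \varepsilon$. A symmetric argument with a lower barrier $\underline\psi$ gives $\liminf_{t\to 0}\underline v(x_0, t) \ge g(x_0) - \varepsilon$. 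Combined with $\underline v \le \overline v$ and letting $\varepsilon \to 0$, this yields $\lim_{t\to 0}\overline v(x_0,t) = \lim_{t\to 0}\underline v(x_0,t) = g(x_0)$.

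The main obstacle, I expect, is the construction of the barrier $\bar\psi$ itself: one needs a single bounded, sufficiently smooth function that simultaneously (i) touches $g(x_0) + \varepsilon$ at $x_0$ from above, (ii) lies above $g$ globally — which forces it to be large outside $B_\rho$, where we only know $g \le \Psi$ and $\|\Psi\|_\infty < \infty$ — and (iii) has $(-\lap_p)^s\bar\psi(x_0)$ bounded above by a constant that is allowed to blow up as $\varepsilon \to 0$ (via $\rho \to 0$) but must be finite for each fixed $\varepsilon$. A clean choice is $\bar\psi(x) = g(x_0) + \varepsilon + C\min(|x - x_0|^2/\rho^2, 1) \cdot \|\Psi\|_\infty$ smoothed near $|x-x_0| = \rho$, or more simply $g(x_0) + \varepsilon + (2\|\Psi\|_\infty + \sup|g|)\,\phi((x-x_0)/\rho)$ for a fixed smooth cutoff $\phi$ with $\phi(0) = 0$, $\phi \equiv 1$ outside the unit ball; then $(-\lap_p)^s\bar\psi(x_0)$ is finite by Remark \ref{welldef} and its size is controlled in terms of $\rho$ and $\|\Psi\|_\infty$. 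One should also double check that $M = M(\varepsilon)$ can indeed be chosen so that $\mathcal{J}_p(M) \ge -(-\lap_p)^s\bar\psi(x_0)$ uniformly on the small cylinder, which is where $p \ge 2$ and the boundedness of $(-\lap_p)^s\bar\psi$ are used; the rest of the argument is the by-now routine comparison-and-pass-to-the-limit machinery already employed in Lemmas \ref{SchemeVISC} and \ref{lem:BC}.
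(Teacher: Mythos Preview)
Your proposal is correct and follows essentially the same barrier-plus-comparison strategy as the paper: the paper takes a fixed bounded smooth radial increasing function $\eta$ with $\eta(0)=0$, sets $\alpha^{p-1}:=\sup_{B_d}|(-\lap_p)^s\eta|$, and uses the barrier $u(x)=g(x_0)+\varepsilon+C(\alpha\tau+\eta(x-x_0))$ with $C=C(\varepsilon)$ large, iterating through the scheme exactly as you describe. One small correction: for the comparison to give $v^k\le\bar\psi+M\tau_k$ you need the supersolution inequality $\mathcal{J}_p(M)+(-\lap_p)^s\bar\psi\ge 0$ on all of $\Omega$, not just near $x_0$; for your cutoff-based $\bar\psi$ this is fine since $\bar\psi$ is maximal outside $B_\rho(x_0)$ (so $(-\lap_p)^s\bar\psi\ge 0$ there) and smooth on the compact set $\overline{B_\rho(x_0)}$.
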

\begin{proof} Take $\eta$ to be a bounded, smooth and strictly increasing radial function such that $\eta(0)=0$. Let $d=\text{diam}\, \Omega$ and 
$$
\alpha^{p-1} = \sup_{x\in B_d}\Big|\laps \eta\, (x)\Big|.
$$
Clearly $\alpha$ is finite. Now we fix $x_0\in \Omega$. We first prove that given $\e>0$ there is $C=C(x_0,\e)$ such that 
$$
u(x)=g(x_0)+\e+C\left(\alpha \tau+\eta(x-x_0)\right)
$$
lies above $v^1$. 

\par As $g$ is continuous, for each $\e>0$ there is $\delta>0$ and $C>0$ so that
$$
|g(x)-g(x_0)|<\e \quad \text{if }|x-x_0|<\delta
$$
and
$$
\sup |g|\leq C \eta(x-x_0)  \quad \text{if } |x-x_0|\geq \delta.
$$
Upon choosing $C$ even larger, we may also assume that $u\geq 0$ in $\R^n\setminus  \Omega$. In addition
\begin{align*}
-\laps u-\frac{|u-g|^{p-2}(u-g)}{\tau^{p-1}}&\leq C^{p-1}\alpha^{p-1} -\frac{\left(g(x_0)+\e+C\eta(\cdot -x_0)-g+C\alpha \tau\right)^{p-1}}{\tau^{p-1}}\\
&\leq C^{p-1}\alpha^{p-1} -C^{p-1}\alpha^{p-1} =0, 
\end{align*}
since $g(x_0)+\e+C\eta(\cdot-x_0)-g(\cdot)\geq 0$ by construction. Now it follows from the comparison principle that
$$
v^1(x)\leq u(x)=g(x_0)+\e+C\left(\alpha \tau+\eta(x-x_0)\right).
$$
Arguing in the same fashion, we have
$$
v^1(x)\geq g(x_0)-\e-C\left(\alpha \tau+\eta(x-x_0)\right).
$$
\par Similarly we can obtain the bounds
$$
g(x_0)-\e-C\left(k\alpha \tau+\eta(x-x_0)\right)\leq v^k(x)\leq g(x_0)+\e+C\left(k\alpha \tau+\eta(x-x_0)\right)
$$
for each $k=1,\dots, N$.  Using the definition \eqref{StepApprox} of $v_N$, we also have
$$
v_N(x,t)\leq v^{k}(x)\leq g(x_0)+\e+C\left(\alpha k\tau+\eta(x-x_0)\right)\leq g(x_0)+\e+C\left(\alpha t+\alpha\frac{T}{N}+\eta(x-x_0)\right)
$$
for $t\in ((k-1)\tau,k\tau)$ as $\tau =T/N$. A similar estimate from below holds, as well. In total, 
$$
g(x_0)-\e-C\left(\alpha t+\alpha\frac{T}{N}+\eta(x-x_0)\right)\leq v_N(x,t)\leq g(x_0)+\e+C\left(\alpha t+\alpha\frac{T}{N}+\eta(x-x_0)\right).
$$

\par Passing to the liminf and limsup in the above inequalities, we find
$$
g(x_0)-\e-C\left(\alpha t+\eta(x-x_0)\right)\leq \underline v(x,t)\leq \overline v(x,t)\leq g(x_0)+\e+C\left(\alpha t+\eta(x-x_0)\right).
$$
And after letting $x= x_0$ and $t\to 0$ 
$$
g(x_0)-\e\leq \liminf_{t\to 0}\underline v(x_0,t)\leq \limsup_{t\to 0}\overline v(x_0,t)\leq g(x_0)+\e.
$$
Since both $\e$ and $x_0\in \Omega$ are arbitrary, the desired result follows.
\end{proof}

\begin{proof}[Proof of Proposition \ref{ViscSolnResult}] It is enough to show that $\overline{v}$ is a viscosity subsolution of \eqref{MainPDE}. The same argument (applied to $-\overline{v}$) yields that $\underline{v}$ is a supersolution. Combining Lemma \ref{lem:BC}, Lemma \ref{lem:IC}, and Proposition \ref{prop:comparison}, would then imply $\overline{v}\le \underline{v}$. Hence, $v:=\overline{v}=\underline{v}$ is continuous and $v_N$ converges to $v$ locally uniformly. The claim would then follow as $v_N$ has a subsequence converging to a weak solution of \eqref{MainPDE} in $C([0,T], L^p(\Omega))$; see Remark \ref{rem:diffeq}.

We now prove that $\overline{v}$ is a viscosity subsolution of \eqref{MainPDE}. Assume that $\phi\in C^{2,1}_{x,t}({B_r(x_0)}\times (t_0-r,t_0+r))$ and $\overline{v}-\phi$ has a strict maximum in $B_r(x_0)\times (t_0-r,t_0+r)$ at $(x_0,t_0)\in \Omega\times(0,T)$. By Lemma \ref{ApproxPoints}, there are points 
$(x_j,t_j)$ converging to $(x_0,t_0)$ and $N_j\in \N$ tending to $+\infty$, as $j\rightarrow \infty$, such that $v_{N_j}-\phi_{N_j}$ has a maximum in $B_r(x_0)\times (t_0-r,t_0+r)$ at $(x_j,t_j)$. Observe that for each $j\in \N$, 
$t_j\in (\tau_{k_j-1}, \tau_{k_j}]$ for some $k_j\in\{0,1,\dots, N_j\}$. Hence, by the definition of $v_{N_j}$ and $\phi_{N_j}$, 
$$
\Omega\times \{0,1,\dots,N_j\}\ni (x,k)\mapsto v^k(x)- \phi(x,\tau_{k})
$$
has a local maximum in $B_r(x_0)\times \{0,1,\dots,N_j\}$ at $(x,k)=(x_j, k_j)$.  By Lemma \ref{LemdiscreteVisc}, 
$$
{\cal J}_p\left(\frac{ \phi(x_j,\tau_{k_j})-  \phi(x_j,\tau_{k_j-1})}{T/N_j}\right)+(-\Delta_p)^s \phi^r(x_j,\tau_{k_j})\le 0.
$$
As $\tau_{k_j-1}=\tau_{k_j}-T/N_j$ and $|t_j-\tau_{k_j}|\le T/N_j$ for $j\in \N$, we can send $j\rightarrow \infty$ above by appealing to the smoothness of $\phi$ and arrive at 
$$
{\cal J}_p(\phi_t(x_0,t_0))+(-\Delta_p)^s\phi^r(x_0,t_0)\leq 0.
$$
It follows that $\overline{v}$ is a viscosity subsolution.
\end{proof}

\section{H\"older estimates for viscosity solutions}\label{sec:holder}
In this section we prove Theorem \ref{Holderthm}. The proof of this regularity result is based on Lemma \ref{lem:key} below. We start by noting an elementary inequality that will come in handy:
\begin{equation}
\label{eq:pest}
|a+b|^{p-2}(a+b)\leq 2^{p-2}(|a|^{p-2}a+|b|^{p-2}b),\quad  a+b\geq 0,\quad  p\geq 2.
\end{equation}
\begin{lem}\label{lem:key} Fix $\delta>0$. Suppose $v$ is continuous in $Q_1^-$ and satisfies (in the viscosity sense)
\begin{align*}
|v_t|^{p-2}v_t+(-\lap_p)^s v\leq 0\text{ in }Q_1^-,\\
v\leq 1\text{ in }Q_1^-,\\
v(x,t)\leq 2|2x|^\eta-1\text{ in }\R^n\setminus B_1\times (-1,0),\\
%|\{B_1\times [-1,-\frac{1}{2^\frac{sp}{p-1}}]\}\cap \{v\leq 0\}|>\delta,
\Big|\left\{B_1\times \left [-1,-\frac{1}{2^\frac{sp}{p-1}}\right ]\right\}\bigcap \{v\leq 0\}\Big|>\delta.
\end{align*}
Then for $\eta$ small enough, $v\leq 1-\theta<1$ in $Q_{1/2}^-$, where $\theta =\theta(\delta,p,s)>0$.
\end{lem}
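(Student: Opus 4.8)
The plan is a barrier argument that reduces the statement to the comparison principle, Proposition~\ref{prop:comparison}, carried out in two stages: first the measure hypothesis is used to gain a sublevel bound for $v$ at a single time, and then that bound is propagated forward in time by an explicit supersolution. Throughout, pointwise values of $(-\lap_p)^s$ on the barriers below are legitimate by Remark~\ref{welldef}, and the nonlocal tails generated by the exterior growth $v\le 2|2x|^\eta-1$ are handled with \eqref{eq:pest}; it is precisely there that the smallness of $\eta$ enters. Write $h(t):=|\{x\in B_1:v(x,t)\le 0\}|$ and $t_1:=-2^{-\frac{sp}{p-1}}$, so the fourth hypothesis reads $\int_{-1}^{t_1}h(t)\,dt>\delta$.

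\emph{Stage 1: a sublevel bound at some time $\tau^\ast\in[-1,t_1]$.} I would show that, for $\eta$ small depending only on $\delta,p,s$, there are $\tau^\ast\in[-1,t_1]$ and $\sigma=\sigma(\delta,p,s)>0$ with $v(\cdot,\tau^\ast)\le 1-\sigma$ on $\overline{B_{3/4}}$. Let $\Theta(t):=\sup_{\overline{B_{3/4}}}v(\cdot,t)$, which is continuous in $t$ since $v$ is continuous on $B_1\times(-1,0]$; at a maximum point $x_t\in\overline{B_{3/4}}\subset B_1$ the constant $\Theta(t)$ touches $v$ from above, so by Proposition~\ref{prop:jut} (equivalently Proposition~\ref{pw}) one obtains, in the viscosity sense,
$$
\mathcal{J}_p(\Theta'(t))+(-\lap_p)^s v(x_t,t)\le 0 .
$$
The key point is that in
$$
(-\lap_p)^s v(x_t,t)=\int_{\R^n}\mathcal{J}_p\!\bigl(\Theta(t)-v(x_t+y,t)\bigr)\,|y|^{-n-sp}\,dy
$$
the contribution of $\{v(\cdot,t)\le 0\}\cap B_1$ — a set of measure $h(t)$ lying within distance $2$ of $x_t$ — is at least $2^{-n-sp}\Theta(t)^{p-1}h(t)$, that of the rest of $B_1$ is nonnegative, and that of $\R^n\setminus B_1$ is, by the exterior growth and \eqref{eq:pest}, a tail $\mathcal{T}_\eta$ which is small once $\eta$ is small, uniformly for $\Theta(t)$ in a fixed range $[1-\sigma,1]$. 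Feeding this into the displayed inequality: as long as $\Theta(t)\ge 1-\sigma$, the function $\Theta$ decreases at rate $\ge c(\delta,p,s)>0$ at every $t$ with $h(t)\ge c'\delta$, and can rise at rate at most $c''\sigma$ otherwise. Since Chebyshev's inequality produces a set of times in $[-1,t_1]$ of measure $\gtrsim\delta$ on which $h$ is that large, if $\Theta$ stayed $\ge 1-\sigma$ throughout $[-1,t_1]$ it would decrease there by more than $\sigma$ (after fixing $\sigma$ small), a contradiction; this proves the claim. Making the heuristic ``ODE for $\Theta$'' rigorous for a merely continuous viscosity subsolution, and keeping $\sigma,\eta$ dependent only on $\delta,p,s$, is the technical heart of the proof.

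\emph{Stage 2: propagating the bound.} With $\tau^\ast,\sigma$ from Stage~1, I would compare $v$ on $B_{1/2}\times(\tau^\ast,0]$ with the barrier equal to $1-u(t)$ on $B_{1/2}$ and to $\max\!\bigl(1-u(t),\,2|2x|^\eta-1\bigr)$ (truncated far away, to meet the boundedness hypothesis of Proposition~\ref{prop:comparison}) on $\R^n\setminus B_{1/2}$, where $u$ solves a linear ODE $\dot u=-cu$, $u(\tau^\ast)=\sigma$. On $B_{1/2}$ the spatial profile is constant, and since $2|2x|^\eta-1\ge 1>1-u(t)$ for $|x|\ge\tfrac12$, the integrand of $(-\lap_p)^s$ of the barrier is nonpositive there and bounded above by $-c'\,u(t)^{p-1}$ with $c'=c'(n,s,p)>0$; choosing $c$ large enough that $(c\,u)^{p-1}\ge c'\,u^{p-1}$ makes the barrier a classical, hence viscosity, supersolution of \eqref{MainPDE} on $B_{1/2}\times(\tau^\ast,0]$. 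It dominates $v$ on the parabolic boundary, using Stage~1 at $t=\tau^\ast$ together with $v\le 1\le$ the barrier on $B_1\setminus B_{1/2}$ and $v\le 2|2x|^\eta-1\le$ the barrier off $B_1$ (a continuity adjustment near $\partial B_{1/2}$, flagged below, is needed to respect the compatibility hypothesis of Proposition~\ref{prop:comparison}). Comparison then gives $v\le 1-u(t)$ on $B_{1/2}\times(\tau^\ast,0]\supseteq Q_{1/2}^-$, and $u(t)\ge\sigma e^{-c}$ there since $0-\tau^\ast\le 1$; so $\theta:=\sigma e^{-c}$ works.

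\emph{Main obstacle.} The crux is Stage~1: converting the monotonicity heuristic for $\sup_{\overline{B_{3/4}}}v$ into a rigorous viscosity argument (via Proposition~\ref{prop:jut}), extracting from the time-averaged measure bound a large set of ``fat'' time slices, and — most delicately — balancing the $\delta$-sized gain from $\{v\le 0\}$ against the tail loss coming from the exterior growth, which is what pins $\eta$ (and hence $\theta$) to depend only on $\delta,p,s$. A secondary point of care is Stage~2: matching the ball $\overline{B_{3/4}}$ of Stage~1 with the comparison geometry and with the exterior datum $2|2x|^\eta-1>1$ without introducing a discontinuity of the barrier that would violate the compatibility hypothesis of Proposition~\ref{prop:comparison}.
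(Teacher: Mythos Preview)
Your two-stage strategy is a reasonable heuristic but, as written, has two genuine gaps, and it differs substantially from what the paper does.

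In Stage~1, the claim that ``the contribution of the rest of $B_1$ is nonnegative'' is not correct. If the maximum point $x_t$ sits on (or near) $\partial B_{3/4}$, then for small $y$ with $x_t+y\in B_1\setminus B_{3/4}$ you only know $v\le 1$, so $\mathcal J_p(\Theta(t)-v(x_t+y,t))$ can be as negative as $-\sigma^{p-1}$, while the weight $|y|^{-n-sp}$ is not integrable at the origin. Proposition~\ref{pw} guarantees that the full principal-value integral for $(-\Delta_p)^s v(x_t,t)$ exists, but your termwise decomposition need not, and the lower bound you assert does not follow. The more serious problem is Stage~2. Your barrier equals $1-u(t)$ on $B_{1/2}$ and jumps to values $\ge 1$ immediately outside; at any $x_0\in B_{1/2}$ close to $\partial B_{1/2}$, a $C^2$ test function $\phi$ touching the barrier from below has $\phi^r$ equal to the barrier outside $B_r(x_0)$, so $(-\Delta_p)^s\phi^r(x_0,t)$ picks up a contribution $\le -u(t)^{p-1}|y|^{-n-sp}$ from every $y$ with $x_0+y\notin B_{1/2}$, including arbitrarily small $y$. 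Hence $(-\Delta_p)^s\phi^r(x_0,t)=-\infty$ and the supersolution inequality cannot hold for finite $\phi_t$: the barrier is not a viscosity supersolution and Proposition~\ref{prop:comparison} does not apply. The ``continuity adjustment'' you flag is not cosmetic --- it forces you to replace the step profile by a smooth bump $\rho$, after which the simple ODE $\dot u=-cu$ no longer suffices (since $(-\Delta_p)^s(-u\rho)(x_0)=-u^{p-1}(-\Delta_p)^s\rho(x_0)$ changes sign with $x_0$) and you must also reconcile $\rho$ with the exterior datum.

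The paper takes a different route that sidesteps both issues. It never builds a supersolution and never invokes Proposition~\ref{prop:comparison}; instead it constructs a single smooth barrier $b(x,t)=1+\varepsilon-m(t)\rho(x)$, with $\rho\in C_0^\infty(B_{3/4})$, $\rho\equiv 1$ on $B_{1/2}$, and $m$ solving $m'=c_0|G(t)|-c_1 m$, $m(-1)=0$, where $G(t)=\{x\in B_1:v(x,t)\le 0\}$. The measure hypothesis gives a lower bound on $m$ for $t\in[-2^{-sp/(p-1)},0]$, so $v\le b$ already yields the conclusion with $\theta=\tfrac12 c_0 e^{-c_1}\delta$. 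To prove $v\le b$, one argues by contradiction at a first touching point $(x_0,t_0)$ and estimates $(-\Delta_p)^s(b-v)(x_0,t_0)$ --- not $(-\Delta_p)^s b$ --- from above and below: the set $G(t_0)$ produces a positive lower bound, the tail outside $B_1$ contributes an error $c_\eta\to 0$, and the subsolution property of $v$ together with the ODE for $m$ and \eqref{eq:pest} produce an upper bound involving only $(-\Delta_p)^s\rho(x_0)$. An a priori choice of $c_0,c_1,\varepsilon$ then gives a contradiction for all small $\eta$. The key differences from your proposal are that the measure information is encoded directly in the barrier (so there is no separate Stage~1), and that working with $b-v$ at the touching point replaces the need for $b$ to be a supersolution.
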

Recall that the parabolic cylinders $Q_1^-$ and $Q_\frac12^-$ have been defined on page \ref{p:def}. Before proving this lemma, we will first need to gain control of a certain function.
\begin{lem}\label{lem:mblem} Fix $\delta>0$, let $\e>0$ and assume the following
\begin{align*}
m(t)=e^{-c_1t}\int_{-1}^t c_0e^{c_1s}|G(s)|ds,\\
%m(-1)=0,\\
%m'(t)	=c_0|G(t)|-c_1m(t),\\
G(t)=\{x\in B_1: v(x,t)\leq 0\},\\
b(x,t)=1+\e-m(t)\rho(x),\\
0\leq \rho\leq 1, \quad \rho = 1 \text{ in $B_\frac12$},\quad  \rho\in C_0^\infty(B_\frac34), \\
\Big|\left\{B_1\times \left [-1,-\frac{1}{2^\frac{sp}{p-1}}\right ]\right\}\bigcap \{v\leq 0\}\Big|>\delta.
\end{align*}
If $c_0|B_1|\leq c_1/2$ then
$$
b(x,t)\geq \frac12
$$
and for $0\geq t\geq -1/2^\frac{sp}{p-1}$,
$$
m(t)\geq c_0 e^{-c_1} \delta.
$$
\end{lem}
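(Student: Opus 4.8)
The statement splits into two independent claims: the pointwise lower bound $b(x,t) \geq \tfrac12$ on the modified barrier, and the lower bound $m(t) \geq c_0 e^{-c_1}\delta$ on the weight function over the early time interval. The plan is to treat each in turn, with the second being purely a matter of unwinding the definition of $m$ and using the measure hypothesis.

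First I would establish $b(x,t) \geq \tfrac12$. By definition $b(x,t) = 1+\e - m(t)\rho(x)$, and since $0 \leq \rho \leq 1$ we have $b(x,t) \geq 1+\e - m(t) \geq 1 - m(t)$, so it suffices to show $m(t) \leq \tfrac12$ for $t \in [-1,0]$. From the formula $m(t) = e^{-c_1 t}\int_{-1}^t c_0 e^{c_1 s}|G(s)|\,ds$ and the trivial bound $|G(s)| \leq |B_1|$, I estimate
$$
m(t) \leq c_0 |B_1| e^{-c_1 t}\int_{-1}^t e^{c_1 s}\,ds = c_0|B_1| e^{-c_1 t}\cdot\frac{e^{c_1 t} - e^{-c_1}}{c_1} = \frac{c_0|B_1|}{c_1}\left(1 - e^{-c_1(t+1)}\right) \leq \frac{c_0|B_1|}{c_1}.
$$
The hypothesis $c_0|B_1| \leq c_1/2$ then gives $m(t) \leq \tfrac12$, hence $b(x,t) \geq \tfrac12$. (Here I am implicitly using $m(t) \geq 0$, which is clear since the integrand is nonnegative; this also keeps the estimate $b \geq 1 - m(t)$ meaningful, and one should note $m$ is nondecreasing-ish — actually not needed.)

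Next I would prove the lower bound on $m(t)$ for $-1/2^{sp/(p-1)} \leq t \leq 0$. Since the integrand $c_0 e^{c_1 s}|G(s)|$ is nonnegative and $e^{-c_1 t} \geq e^{-c_1\cdot 0} = 1 \geq$ ... — more carefully, for $t \leq 0$ we have $e^{-c_1 t} \geq 1$ when $c_1 > 0$, so actually $m(t) \geq \int_{-1}^t c_0 e^{c_1 s}|G(s)|\,ds$. Restricting the integral to $s \in [-1, -1/2^{sp/(p-1)}]$ (which is contained in $[-1,t]$ since $t \geq -1/2^{sp/(p-1)}$) and using $e^{c_1 s} \geq e^{-c_1}$ on that range, I get
$$
m(t) \geq c_0 e^{-c_1}\int_{-1}^{-1/2^{sp/(p-1)}} |G(s)|\,ds = c_0 e^{-c_1}\left|\left\{B_1 \times [-1, -2^{-sp/(p-1)}]\right\} \cap \{v \leq 0\}\right| > c_0 e^{-c_1}\delta,
$$
where the middle equality is Fubini's theorem identifying the time-integral of the slice measures $|G(s)| = |\{x \in B_1 : v(x,s) \leq 0\}|$ with the space-time measure, and the final inequality is the standing hypothesis. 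This completes both claims.

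\textbf{Main obstacle.} There is no serious analytic obstacle here; the lemma is a bookkeeping step isolating elementary consequences of the ODE-type definition of $m$. The only points requiring care are getting the direction of the exponential inequalities right (that $e^{-c_1 t} \geq 1$ for $t \leq 0$ and that the smallest value of $e^{c_1 s}$ on $[-1,0]$ is $e^{-c_1}$), ensuring the interval $[-1, -2^{-sp/(p-1)}]$ indeed lies inside $[-1, t]$ for the relevant range of $t$ (which holds precisely because $t \geq -2^{-sp/(p-1)}$), and invoking Fubini to pass between the time integral of $|G(s)|$ and the space-time measure appearing in the hypothesis.
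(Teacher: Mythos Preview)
Your proof is correct and follows essentially the same approach as the paper: both parts use the trivial bound $|G(s)|\le |B_1|$ together with $c_0|B_1|\le c_1/2$ to get $m(t)\le \tfrac12$, and then bound $e^{c_1 s}\ge e^{-c_1}$ on $[-1,0]$ combined with Fubini and the measure hypothesis for the lower bound on $m$. The only cosmetic difference is that the paper keeps the factor $e^{-c_1 t}$ attached (writing $m(t)\ge e^{c_1(-1-t)}\int_{-1}^t c_0|G(s)|\,ds$) before discarding it, whereas you discard $e^{-c_1 t}\ge 1$ first; the resulting bound is identical.
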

\begin{rem} Note that $m$ solves the equation
$$
m'(t)	=c_0|G(t)|-c_1m(t)
$$
for a.e. $t\in [-1,0]$.
\end{rem}
\begin{proof} As $|G(t)|\leq |B_1|$, it follows that $m(t)\leq c_0/c_1 |B_1|$. And since $c_0|B_1|\leq c_1/2$, 
$$
b(x,t)\geq 1+\e-\frac{c_0}{c_1}|B_1|\rho(x)\geq 1+\e-\frac{c_0}{c_1}|B_1|\geq \frac12.
$$
 Moreover, 
$$
m(t)=e^{-c_1t}\int_{-1}^t c_0e^{c_1s}|G(s)|ds\geq  e^{c_1(-1-t)}\int_{-1}^{t} c_0|G(s)| ds.
$$
From our hypotheses, 
$$
\int_{-1}^{-\frac{1}{2^\frac{sp}{p-1}}} |G(s)| ds\geq \delta.
$$
Therefore, 
$$
m(t)\geq c_0 e^{-c_1}\delta,
$$
for $0\geq t\geq -1/2^\frac{sp}{p-1}$.
\end{proof}
\begin{proof}[~Proof of Lemma \ref{lem:key}]
Assume the hypotheses of Lemma \ref{lem:mblem}. Choose $c_0$, $c_1$ and $\e$ so that
$$
c_0^{p-1} < 1/(2^{2p-4+n+sp}|B_1|^{p-2}), \quad c_0|B_1|\leq c_1/2, \quad 2^{2-p}(c_1)^{p-1}>2\sup_{x_0\in B_\frac34}(-\lap_p)^s\left(\frac{\rho}{\rho(x_0)}\right)\,(x_0), %c_0|B_1|\leq \min (1,c_1/4)
$$
and
$$
2\e<e^{-c_1}c_0\delta.
$$
Note that the quantity 
$$
2\sup_{x_0\in B_\frac34}(-\lap_p)^s\left(\frac{\rho}{\rho(x_0)}\right)\,(x_0)
$$
is finite, since the only way it could be infinite, is if there is maximizing sequence of points $x_j$ where $\rho(x_j)\to 0$. But then 
$$
(-\lap_p)^s\left(\frac{\rho}{\rho(x_j)}\right)\,(x_j)
$$
would be negative for $j$ large enough.

\par We claim that $v\leq b$ in $Q_1^-$. Let us describe how the lemma follows once this claim is proved. By the lower bound on $m$ in Lemma \ref{lem:mblem} we have
$$
b(x,t)\leq 1+\e-e^{-c_1}c_0\delta
$$
for $0\geq t\geq -1/2^\frac{sp}{p-1}$.
Since $2\e<e^{-c_1}c_0\delta $,
$$
b(x,t)\leq 1-\frac{e^{-c_1}c_0\delta}{2}.
$$
Therefore, 
$$
v\leq b\leq 1-\theta
$$
in $Q_\frac12^-$ as long as we choose 
$$
\theta = \frac{e^{-c_1}c_0\delta}{2}.
$$

\par Let us now prove that $v\leq b$ in $Q_1^-$. We argue by contradiction. Assume that, starting from $t=-1$, the first time $v$ touches $b$ at some point in $Q_1^-$ is at the point $(x_0,t_0)$. Since $\rho =0$ outside $B_\frac34$ and $v\leq 1$ in $Q_1^-$, we know that $x_0\in B_\frac34$. In addition, since $m(-1)=0$, we know $t_0>-1$. It is not difficult to see $b$ touches $v$ from above at $(x_0,t_0)$ in the sense of \eqref{eq:past}. In order to simplify the presentation, we first assume that $b$ is $C^1$ at $(x_0,t_0)$ and explain in the last paragraph of this proof how to relax this assumption. 

\par By Proposition \ref{pw}, $(-\lap_p)^s  v (x_0,t_0)$ is well defined and
\begin{equation}\label{eq:bveq}
|b_t(x_0,t_0)|^{p-2}b_t(x_0,t_0) +(- \lap_p)^s v(x_0,t_0)\leq 0, 
\end{equation}
%at $(x_0,t_0)$. 
Note that $b_t(x_0,t_0)=-m'(t_0)\rho(x_0)$.  We will now estimate $(-\lap_p)^s (b-v)\, (x_0,t_0)$ from above and from below and arrive at a contradiction. This part of the proof will be divided into four steps. Along the way, we will use the notation
$$
L_D w\, (x,t):=2\, \pv \int_{y\in D} \frac{\J(w(y,t)-w(x,t))}{|x-y|^{n+ps}} d y, 
$$
for a measurable function $w$ and an open or closed set $D\subset \R^n$. Notice that
$$
(-\lap_p)^s w = -L_{\R^n} w.
$$

\noindent {\bf Step 1: Estimate $L_{B_1}$}\\
Since $b(\cdot,t_0)\geq v(\cdot,t_0)$ in $B_1$, \eqref{eq:pest} implies
\begin{align*}
L_{B_1} (b-v)\, (x_0,t_0)&=2\,\pv\int_{B_1} \frac{\J((b-v)(y,t_0)-(b-v)(x_0,t_0))}{|x_0-y|^{n+ps}} dy\\
&\leq 2^{p-1}\pv\int_{B_1}\frac{\J(b(y,t_0)-(b(x_0,t_0))-\J(v(y,t_0)-v(x_0,t_0))}{|x_0-y|^{n+ps}} dy\\
&=2^{p-2} \left(L_{B_1} b\, (x_0,t_0) - L_{B_1} v\, (x_0,t_0)\right).
\end{align*}
In addition, since $v(x_0,t_0)=b(x_0,t_0)$
\begin{align*}
L_{B_1} (b-v)\, (x_0,t_0)&=2\,\pv\int_{B_1} \frac{\J((b-v)(y,t_0))}{|x_0-y|^{n+ps}} dy\\
&\geq 2\int_{G(t_0)} \frac{|b(y,t_0)|^{p-2}b(y,t_0)}{|x_0-y|^{n+ps}} dy\\
&\geq 2\left(\frac12\right)^{n+sp}\inf_{y\in B_1}|b(y,t_0)|^{p-1} |G(t_0)|\\
&\geq \left(\frac12\right)^{p-2+n+sp}| G(t_0)|,
\end{align*}
from Lemma \ref{lem:mblem}.

\noindent {\bf Step 2: Estimate $L_{\R^n\setminus B_1}$}\\
By our hypotheses,
$$
v(y,t_0)\leq 2|2y|^\eta-1,\quad b=1+ \e>1
$$
whenever $|y|>1$.
Hence, $b(y,t_0)-v(y,t_0)\geq 2(1-|2y|^\eta)$ so that
\begin{equation}
\label{eq:alphaest1}
b(y,t_0)-v(y,t_0)\leq b(y,t_0)-v(y,t_0)+2(|2 y|^\eta -1)
\end{equation}
and
\begin{equation}\label{eq:alphaest2}
 b(y,t_0)-v(y,t_0)+2(|2 y|^\eta -1) \geq 0.
\end{equation}
\par By \eqref{eq:pest}, \eqref{eq:alphaest1} and \eqref{eq:alphaest2}
\begin{align*}
L_{\R^n \setminus B_1} (b-v)\,(x_0,t_0) &\leq 2\int_{\R^n\setminus B_1}\frac{\J(b(y,t_0)-v(y,t_0)+2(|2y|^\eta-1) -(b(x_0,t_0)-v(x_0,t_0)))}{|x_0-y|^{n+ps}} dy\\
&\leq 2^{p-2}\left(-L_{\R^n \setminus B_1} \, v(x_0,t_0)+2\int_{\R^n\setminus B_1} \frac{\J(b(y,t_0)+2(|2y|^\eta-1)-b(x_0,t_0))}{|x_0-y|^{n+ps}} dy\right).
\end{align*}
Using \eqref{eq:alphaest2}, we obtain the estimate from below
\begin{align*}
L_{\R^n \setminus B_1} (b-v)\,(x_0,t_0) &= 2\int_{\R^n\setminus B_1}\frac{\J(b(y,t_0)-v(y,t_0))}{|x_0-y|^{n+ps}} dy\\
&\geq -2\int_{\R^n\setminus B_1}\left(2(|2y|^\eta-1)\right)^{p-1} \frac{dy}{|x_0-y|^{n+sp}}\\
&:=-c_\eta.
\end{align*}
We note that $\lim_{\eta\rightarrow 0^+}c_\eta=0$ by an application of the dominated convergence theorem.\\

\noindent {\bf Step 3: Use the equation}\\
The two steps above together imply
\begin{align}\label{eq:ineq1}
\left(\frac12\right)^{p-2+n+sp} |G(t_0)|-c_\eta &\leq -(-\lap_p)^s(b-v)\, (x_0,t_0)\nonumber \\
&\leq 2^{p-2} (-\lap_p)^s  v\, (x_0,t_0)+2^{p-2} L_{B_1} b(x_0,t_0)\\
&+2^{p-1}\int_{\R^n\setminus B_1}\frac{\J(b(y,t_0)+2(|2y|^\eta-1)-b(x_0,t_0)}{|x_0-y|^{n+ps}} dy\nonumber .
\end{align}

\par From inequality \eqref{eq:bveq}, it follows that
\begin{align}\label{eq:ineq2}
(-\lap_p)^s v\, (x_0,t_0)&\leq -|b_t|^{p-2}b_t (x_0,t_0)\nonumber \\
&= |m'(t_0)\rho(x_0)|^{p-2} m'(t_0)\rho(x_0)\\\nonumber 
&= |\rho(x_0)c_0|G(t_0)|-\rho(x_0)c_1 m(t_0)|^{p-2}(\rho(x_0)c_0|G(t_0)|-\rho(x_0)c_1 m(t_0))\\
&\leq |c_0 |G(t_0)|-\rho(x_0)c_1 m(t_0)|^{p-2}(c_0|G(t_0)|-\rho(x_0)c_1 m(t_0)).\nonumber 
\end{align}
Using \eqref{eq:pest}, with $a=c_1\rho(x_0)m(t_0)-c_0|G(t_0)|$ and $b=c_0|G(t_0)|$, we then obtain 
\begin{align*}
&(c_1 \rho(x_0)m(t_0))^{p-1}\leq \\
&2^{p-2}| \rho(x_0)c_1 m(t_0)-c_0|G(t_0)||^{p-2}(\rho(x_0)c_1 m(t_0)-c_0|G(t_0)|)+2^{p-2}(c_0|G(t_0)|)^{p-1}.
\end{align*}
After rearranging
\begin{align}\label{eq:ineq3}
&2^{p-2}|c_0 |G(t_0)|-\rho(x_0)c_1 m(t_0)|^{p-2}(c_0|G(t_0)|-\rho(x_0)c_1 m(t_0))\leq \\
&2^{p-2}(c_0|G(t_0)|)^{p-1}-(c_1 \rho(x_0)m(t_0))^{p-1} .\nonumber 
\end{align}

Combining \eqref{eq:ineq1}, \eqref{eq:ineq2} and \eqref{eq:ineq3} yields
\begin{align*}
&(c_1 \rho(x_0)m(t_0))^{p-1}-2^{p-2}(c_0|G(t_0)|)^{p-1}+\left(\frac12\right)^{p-2+n+sp} |G(t_0)|-c_\eta \\
&\leq 2^{p-2}L_{B_1} b(x_0,t_0)+2^{p-1}\int_{\R^n\setminus B_1}\frac{\J(b(y,t_0)+2(|2y|^\eta-1)-b(x_0,t_0))}{|x_0-y|^{n+ps}} dy.
\end{align*}

\par Since we assumed at the outset that $c_0^{p-1} < 1/(2^{2p-4+n+sp}|G(t_0)|^{p-2})$, we have by the definition of $b$ and $L_{B_1}$
\begin{align}\label{eq:lastineq}
&2^{2-p}(c_1\rho(x_0) m(t_0))^{p-1}\\
&\leq 2^{2-p}c_\eta+2\,\pv\int_{B_1} \frac{\J(m(t_0)(\rho(x_0)-\rho(y))}{|x_0-y|^{n+ps}} dy +2\int_{\R^n\setminus B_1}\frac{\J(m(t_0)\rho(x_0)+2(|2y|^\eta-1))}{|x_0-y|^{n+ps}} dy.\nonumber 
\end{align}
Here we also used that $\rho(y)=0$ whenever $y\not\in B_1$.

\noindent {\bf Step 4: Arrive at a contradiction}\\
It follows from the proof of Lemma \ref{lem:mblem} that $m$ is uniformly bounded with respect to $\eta$. Consequently, the second integral on the right hand side of \eqref{eq:lastineq} is uniformly bounded for all small $\eta$. We can again apply the dominated convergence theorem to show that the right hand side of \eqref{eq:lastineq} converges to the quantity
$$
-(-\lap_p)^s b\, (x_0,t_0) =  (m(t_0))^{p-1} (-\lap_p)^s\rho\, (x_0),
$$
as $\eta\to 0$. As $m$ is bounded from below by 1/2 (by Lemma \ref{lem:mblem}), there is $\gamma_\eta\searrow0$ as $\eta\to 0$ such that 
\begin{equation}\label{eq:etaeq}
2^{2-p}(c_1\rho(x_0))^{p-1}\leq \gamma_\eta +(-\lap_p)^s\rho\, (x_0).
\end{equation}
In general, $x_0$ will depend on $\eta$. Let us now consider two cases depending on the size of $(-\lap_p)^s \rho\,(x_0)$ for $\eta$ small.

\par For the first case, we suppose $\limsup_{\eta\rightarrow 0^+}(-\lap_p)^s \rho\,(x_0)\le 0$. Then \eqref{eq:etaeq} forces $\lim_{\eta\rightarrow 0^+}\rho(x_0)=0$ as $\eta\to 0$. It would then follow that $(-\lap_p)^s \rho\,(x_0)< -\gamma_\eta$ for all small $\eta>0$. Together with \eqref{eq:etaeq}, this would in turn would force $\rho(x_0)<0$ for $\eta$ small enough, which is a contradiction. 

\par Alternatively if $\limsup_{\eta\rightarrow 0^+}(-\lap_p)^s \rho\,(x_0)>0$, then for some sequence of $\eta\to 0$,
$(-\lap_p)^s \rho\, (x_0)\ge \gamma_{\eta}$. By \eqref{eq:etaeq} 
$$
2^{2-p}(c_1\rho(x_0))^{p-1}\leq 2(-\lap_p)^s\rho\, (x_0)
$$
along this sequence. Also note that $(-\lap_p)^s \rho\, (x_0)>0$ implies that $x_0\in B_\frac34$.
After dividing by $(\rho(x_0))^{p-1}$, we have
$$
2^{2-p}(c_1)^{p-1}\leq 2(-\lap_p)^s\left(\frac{\rho}{\rho(x_0)}\right)\, (x_0)\leq 2\sup_{x_0\in B_\frac34}(-\lap_p)^s\left(\frac{\rho}{\rho(x_0)}\right)\,(x_0).
$$
However, by our hypotheses on $c_1$
$$
2^{2-p}(c_1)^{p-1}>2\sup_{x_0\in B_\frac34}(-\lap_p)^s\left(\frac{\rho}{\rho(x_0)}\right)\,(x_0),
$$
which is a contradiction.

\noindent {\bf Step 5: Relax the $C^1$ assumption on $b$}\\
As mentioned above, $m$ is not necessarily $C^1$ since $|G(t)|$ is not necessarily continuous. We have chosen to ignore this fact in the reasoning above, in order to make the proof more accessible. This issue can be handled as follows. 

\par First, set 
$$
\chi_k(x,t): = \int^0_{-1}\phi_k(t-s)\chi_{\{v\leq 0\}}(x,s)ds, 
$$
for $x\in \R^n$ and $t\in \R$, where $\phi_k$ is a standard mollifier. Also define
$$
g_k(t)=\int_{B_1}\chi_k(x,t) dx.
$$
Observe $g_k(t)\to |G(t)|$ a.e. and in $L^1(\R)$ as $k\rightarrow\infty$. 

\par Now set 
$$
m_k(t):=e^{-c_1t}\int_{-1}^t c_0e^{c_1s}g_k(s)ds
$$
for $t\in [-1,0]$ and 
$$
b_k(x,t):=1+\e-m_k(t)\rho(x)
$$
for $(x,t)\in Q^-_1$.  It is evident that $m_k\to m$ and $b_k\to b$ uniformly as $k\to\I$.

\par Recall that  $b-v\ge\e$ and $b_k-v\ge\e$ on $\partial B_1\times[-1,0]\cup B_1\times\{-1\}$. These facts combined with the above uniform convergence implies that $v$ touches $b_k$ from below at some $(x_k,t_k)\to (x_0,t_0)$, where $v$ touches $b$ from below at $(x_0,t_0)$.  Without loss of 
generality, we may assume that $t_k<0$ for all $k\in \N$ large enough.  Moreover, as in Step 1 
we find
\begin{eqnarray}
\label{eq:chik}
2\,\pv\int_{B_1} \frac{\J((b_k-v)(y,t_k))}{|x_k-y|^{n+ps}} dy \geq 
 \displaystyle\left(\frac12\right)^{p-2+n+sp}\inf_{y\in B_1}\left(\frac{|b_k(y,t_k)|}{1/2}\right)^{p-1}  g_k(t_k)
 \nonumber\\-2\displaystyle\int_{B_1}(\chi_k-\chi_{\{v\leq 0\}})(y,t_k) \frac{|b_k(y,t_k)|^{p-2}b_k(y,t_k)}{|x_k-y|^{n+ps}} dy.
\end{eqnarray}
Notice that as $k\to \infty$
$$
\inf_{y\in B_1}\left(\frac{|b_k(y,t_k)|}{1/2}\right)^{p-1} \to \inf_{y\in B_1}\left(\frac{|b(y,t_0)|}{1/2}\right)^{p-1}\ge 1.
$$
Let us now argue that the second term on the right hand side of \eqref{eq:chik} goes to zero as $k\to \infty$.

\par By Lemma \ref{lem:mblem}, $b>0$ and so $b_k>0$ for all $k$ large enough. Hence, $v(x_k,t_k)=b(x_k,t_k)>0$. Since $v$ is continuous, $v>0$ in a neighborhood of $(x_k,t_k)$ for $k$ large. This means that $\chi_k=\chi_{\{v\leq 0\}}=0$ in $B_\tau(x_k)\times \{t_k\}$ if $\tau$ is small enough and $k$ large enough. Hence, 
$$
\int_{B_1}(\chi_k-\chi_{\{v\leq 0\}})(y,t_k) \frac{|b_k(y,t_k)|^{p-2}b_k(y,t_k)}{|x_k-y|^{n+ps}}dy=\int_{B_1\setminus B_\tau(x_k)}(\chi_k-\chi_{\{v\leq 0\}})(y,t_k) \frac{|b_k(y,t_k)|^{p-2}b_k(y,t_k)}{|x_k-y|^{n+ps}}dy.
$$
As a result, the integrand is uniformly bounded and converges to zero almost everywhere. By Lebesgue's dominated convergence theorem, we can conclude 
$$
c_k:=2\int_{B_1}(\chi_k-\chi_{\{v\leq 0\}})(y,t_k) \frac{|b_k(y,t_k)|^{p-2}b_k(y,t_k)}{|x_k-y|^{n+ps}} dy\to 0
$$
as $k\to\infty$.

\par  Steps 2 and 3 go through with minor modifications, so that we can obtain the following analog of \eqref{eq:lastineq}
\begin{align*}%\label{eq:lastineqwithn}\nonumber 
&2^{2-p}(c_1\rho(x_k) m_k(t_k))^{p-1}\leq 2^{2-p}(c_\eta+c_k)\\&+2\pv\int_{B_1} \frac{\J(m_k(t_k)(\rho(x_k)-\rho(y))}{|x_k-y|^{n+ps}} dy +2\int_{\R^n\setminus B_1}\frac{\J(m_k(t_k)\rho(x_k)+2(|2y|^\eta-1))}{|x_k-y|^{n+ps}} dy,\nonumber 
\end{align*}
for all $k$ sufficiently large. We can then send $k\to\infty$ and recover \eqref{eq:lastineq}. At this point, we can repeat Step 4 to complete this proof.
\end{proof}

We are now in a position to verify Theorem \ref{Holderthm} and prove that solutions of equation \eqref{MainPDE} are H\"older continuous.
\begin{proof}[~Proof of Theorem \ref{Holderthm}] Upon rescaling $v$ by the factor
$$
\frac{1}{2\|v\|_{L^\infty(\R^n\times [-2,0))}},
$$
we may assume that $v$ satisfies
$$
|v_t|^{p-2}v_t+(-\lap_p)^s v =0\text{ in }Q_2^-,\quad \osc_{\R^n\times [-2,0)}v \leq 1.
$$
We will now show that
$$
\displaystyle \osc_{{Q^-_{2^{-j}}(x_0,t_0)}} v\leq 2^{-j\alpha},\quad j=0,1,\ldots,  \text{ for any }(x_0,t_0)\in Q_1^-.
$$
Here $\alpha$ is chosen so that 
\begin{equation}\label{alphachoice}
\frac{2-\theta}{2}\leq 2^{-\alpha} \text{ and } \alpha\leq \eta, 
\end{equation}
%with $\delta = \delta(p,s):=(1-1/2^\frac{ps}{p-1})|B_1|/2$ and
where $\theta=\theta(\delta(p,s),p,s)$ and $\eta$ are from Lemma \ref{lem:key} with  $\delta(p,s):=(1-1/2^\frac{ps}{p-1})|B_1|/2$. This will imply the desired result with $C=2^{\alpha}$.

\par To this end, we will find constants $a_j$ and $b_j$ so that
\begin{equation}\label{eq:akbk}
b_j\leq v\leq a_j\text{ in  }Q^ -_{2^{-j}}(x_0,t_0),\quad |a_j-b_j|\leq 2^{-j\alpha}
\end{equation}
for $j\in \Z$. We construct these constants by induction on $j$. For $j\leq 0$, \eqref{eq:akbk} holds true with $b_j=\inf_{\R^n\times [-2,0)} v $ and $a_j=b_j+1$.  

\par Now assume \eqref{eq:akbk} holds for all $j\leq k$. We need to construct $a_{k+1}$ and $b_{k+1}$. Put $m_k=(a_k+b_k)/2$. Then
$$
|v-m_k|\leq 2^{-k\alpha-1}\text{ in  $Q^-_{2^{-k}}(x_0,t_0)$.}
$$
Let 
$$
w(x,t)=2^{\alpha k+1}(v(2^{-k}x+x_0,2^{-k\gamma}t+t_0)-m_k),\quad  \gamma = \frac{sp}{p-1}.
$$
Then 
$$
|w_t|^{p-2}w_t+(-\lap_p)^s w= 0 \text{ in }Q_1^-
$$
and
$$
 |w|\leq 1 \text{ in }Q_1^-.
$$
It also follows for $|y|>1$, such that $2^\ell\leq |y|\leq 2^{\ell+1}$, and $t\geq -2^{\gamma(\ell+1)}$ that 
\begin{align*}
w(y,t)= 2^{\alpha k+1}(v(2^{-k}y+x_0,2^{-k\gamma}t+t_0)-m_k)&\leq 2^{\alpha k+1}(a_{k-\ell-1}-m_k)\\
&\leq 2^{\alpha k+1}(a_{k-\ell-1}-b_{k-\ell-1}+b_{k}-m_k)\\
&\leq 2^{\alpha k+1}(2^{-\alpha(k-\ell-1)}-\frac12 2^{-k\alpha})\\
&\leq 2^{1+\alpha(\ell+1)}-1\leq 2|2y|^\alpha-1\\
&\leq 2|2y|^\eta-1.
\end{align*}
Here we used that \eqref{eq:akbk} holds for $j\leq k$. 

\par Suppose now that 
$$|\{(x,t): w(x,t)\leq 0\}\cap \{B_1\times [-1,-1/2]\}|\geq (1-1/2^\frac{ps}{p-1})|B_1|/2=\delta(p,s).$$
If not we would apply the same procedure to $-w$. Then $w$ satisfies all the assumptions of Lemma \ref{lem:key} with $\delta=\delta(p,s)$, and so 
$$
w\leq 1-\theta\text{ in }Q_\frac12^-. 
$$
Scaling back to $v$ yields
\begin{align*}
v(x,t)&\leq 2^{-1-\alpha k}(1-\theta)+m_k\leq 2^{-1-k\alpha}(1-\theta)+\frac{a_k+b_k}{2}\\
&\leq b_k+2^{-1-\alpha k}(1-\theta)+2^{-1-\alpha k}\\
&\leq b_k+2^{-\alpha (k+1)}
\end{align*}
for $(x,t)\in Q_{2^{-k-1}}(x_0,t_0)$, by \eqref{alphachoice}. Hence, if we let $b_{k+1}=b_k$ and $a_{k+1}=b_k+2^{-\alpha(k+1)}$ we obtain \eqref{eq:akbk} for the step $j=k+1$ and the induction is complete.
\end{proof}

\section{Large time limit}\label{sec:large}%\label{LargeTlim}\
% Formal L2 calculations in here
% Prove convergence theorem 
In this section, we prove Theorem \ref{LargeTthm} and Theorem \ref{LargeTthmvisc}. The main tools are the monotonicity of the Rayleigh quotient and the $W^{s,p}$-seminorm (Proposition \ref{RayleighComp} and Corollary \ref{RayleighGoDown}), the compactness of weak solutions (Theorem \ref{CompactnessLem}) and the H\"older estimates (Theorem \ref{Holderthm}). In order to control the sign of the limiting ground state we also need the following lemma.

\begin{lem}\label{lem:sign} Assume that $v$ is a weak solution of \eqref{pParabolic}. For any positive ground state $w$ for $\laps$ and any constant $C>0$,  there is a $\delta = \delta(w,C)>0$ such that if
\begin{enumerate}
\item $\displaystyle[v(\cdot,0)]^p_{W^{s,p}(\R^n)}\geq \lambda_{s,p}\int_\Omega |w|^p dx$,

\item $\displaystyle\int_\Omega |v(x,0)|^pdx\leq C$,

\item $\displaystyle \frac{[v(\cdot,0)]_{W^{s,p}(\R^n)}^p}{\displaystyle\int_\Omega |v(x,0)|^pdx}\leq \lambda_{s,p}+\delta$,

\item $\displaystyle\int_\Omega |v^+(x,0)|^pdx\geq\frac{1}{2}\int_\Omega |w|^pdx$,
\end{enumerate}
then
\begin{equation}\label{LemmaConclusion}
\int_\Omega |e^{\mu_{s,p} t} v^+(x,t)|^pdx\geq \frac{1}{2}\int_\Omega |w|^pdx,
\end{equation}
for $t\in [0,1]$.
\end{lem}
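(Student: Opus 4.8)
The plan is to argue by contradiction, relying on the compactness of weak solutions (Theorem~\ref{CompactnessLem}) and on the explicit form of solutions started at a ground state (Corollary~\ref{cor:icgroundstate}); a direct Gr\"onwall-type estimate for $t\mapsto\int_\Omega|e^{\mu_{s,p}t}v^+|^p\,dx$ looks awkward, because that quantity need not be monotone unless $v(\cdot,t)$ is already close to a ground state. Put $\beta:=\tfrac12\int_\Omega|w|^p\,dx>0$. If the lemma fails for the given $w$ and $C$, then for every $k\in\N$ there is a weak solution $v_k$ of \eqref{pParabolic} satisfying $(1)$--$(4)$ with $\delta=1/k$, together with a time $t_k\in[0,1]$ such that $\int_\Omega|e^{\mu_{s,p}t_k}v_k^+(x,t_k)|^p\,dx<\beta$. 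Setting $g_k:=v_k(\cdot,0)$, assumptions $(2)$ and $(3)$ yield $[g_k]^p_{W^{s,p}(\R^n)}\le(\lambda_{s,p}+1/k)\int_\Omega|g_k|^p\,dx\le(\lambda_{s,p}+1)C$, so $\{g_k\}$ is bounded in $W^{s,p}_0(\Omega)$ and Theorem~\ref{CompactnessLem} is available.

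Next I would pass to a subsequence along which $v_k\to v$ in $C([0,T];L^p(\Omega))$ for all $T>0$, with $v$ a weak solution of \eqref{pParabolic} and $g:=v(\cdot,0)$ the weak $W^{s,p}_0(\Omega)$-limit of $\{g_k\}$; by the compact embedding $W^{s,p}_0(\Omega)\hookrightarrow L^p(\Omega)$ one may further assume $g_k\to g$ in $L^p(\Omega)$. The crucial step is to identify $g$ as a suitably large one-signed ground state. First, $(1)$ and $(3)$ give $\int_\Omega|g_k|^p\,dx\ge[g_k]^p_{W^{s,p}(\R^n)}/(\lambda_{s,p}+1/k)\ge\lambda_{s,p}\int_\Omega|w|^p\,dx/(\lambda_{s,p}+1/k)$, whence in the limit $\int_\Omega|g|^p\,dx\ge\int_\Omega|w|^p\,dx>0$; in particular $g\ne0$. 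Second, $[g_k]^p_{W^{s,p}(\R^n)}=\big(\lambda_{s,p}+O(1/k)\big)\int_\Omega|g_k|^p\,dx\to\lambda_{s,p}\int_\Omega|g|^p\,dx$, while weak lower semicontinuity of the Gagliardo seminorm gives $[g]^p_{W^{s,p}(\R^n)}\le\liminf_k[g_k]^p_{W^{s,p}(\R^n)}$; together with the definition of $\lambda_{s,p}$ this forces $[g]^p_{W^{s,p}(\R^n)}=\lambda_{s,p}\int_\Omega|g|^p\,dx$, so $g$ is a ground state for $\laps$. Since ground states have a definite sign, and since $(4)$ with $g_k\to g$ in $L^p$ gives $\int_\Omega|g^+|^p\,dx\ge\beta>0$, we must have $g\ge0$ a.e.\ in $\Omega$, i.e.\ $g^+=g$. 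Corollary~\ref{cor:icgroundstate} then yields $v(x,t)=e^{-\mu_{s,p}t}g(x)$, so $\int_\Omega|e^{\mu_{s,p}t}v^+(x,t)|^p\,dx=\int_\Omega|g|^p\,dx$ for every $t\ge0$.

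To conclude, I would extract a further subsequence with $t_k\to t_*\in[0,1]$; then $v_k\to v$ in $C([0,1];L^p(\Omega))$ and $v(\cdot,t)=e^{-\mu_{s,p}t}g$ give $v_k(\cdot,t_k)\to e^{-\mu_{s,p}t_*}g$ in $L^p(\Omega)$, hence $e^{\mu_{s,p}t_k}v_k^+(\cdot,t_k)\to g$ in $L^p(\Omega)$ (the positive part being continuous on $L^p$) and $\int_\Omega|e^{\mu_{s,p}t_k}v_k^+(x,t_k)|^p\,dx\to\int_\Omega|g|^p\,dx$. Since each term on the left is $<\beta$ whereas $\int_\Omega|g|^p\,dx\ge\int_\Omega|w|^p\,dx=2\beta$, this is a contradiction. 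The point I expect to require the most care is precisely this final gap: assumption $(4)$ on its own would give only $\int_\Omega|g|^p\,dx\ge\beta$, with equality not excluded, which does \emph{not} contradict the limit --- it is essential to use $(1)$ (together with $(3)$) to upgrade the lower bound on $\int_\Omega|g|^p\,dx$ to $\int_\Omega|w|^p\,dx=2\beta$. The remaining ingredients are routine, and in particular no monotonicity of the Rayleigh quotient along the flow is needed for this lemma.
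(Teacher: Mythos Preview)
Your proposal is correct and follows essentially the same contradiction--compactness strategy as the paper: negate the conclusion with $\delta=1/k$, use Theorem~\ref{CompactnessLem} to pass to a limit solution started at a ground state $g$, invoke Corollary~\ref{cor:icgroundstate} to get $v(x,t)=e^{-\mu_{s,p}t}g(x)$, and derive a contradiction from the bound $\int_\Omega|g|^p\,dx\ge\int_\Omega|w|^p\,dx$ (coming from $(1)$ and $(3)$) against the failing inequality. Your write-up is in fact more explicit than the paper's at two points --- you spell out why $g$ is a ground state via weak lower semicontinuity of the Gagliardo seminorm, and you extract $t_k\to t_*$ and use continuity of the positive part on $L^p$ to pass to the limit --- where the paper simply asserts that the initial data converge in $W^{s,p}(\mathbb R^n)$ to a positive ground state; your closing remark that $(4)$ alone would only give $\int_\Omega|g|^p\,dx\ge\beta$ and that $(1)$ is what upgrades this to $2\beta$ is exactly the point.
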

\begin{proof} We argue towards a contradiction. If the result fails, then there are $w$ and $C$ such that for every $\delta>0$, there is a weak solution $v$ that satisfies $(1)-(4)$ while 
\eqref{LemmaConclusion} fails. Therefore, associated to $\delta_j:=1/j$ $(j\in \N)$, there is a weak solution $v_j$ that satisfies 
\begin{enumerate}
\item $\displaystyle [v_j(\cdot,0)]^p_{W^{s,p}(\R^n)}\geq \lambda_{s,p}\int_\Omega |w|^p dx$,
\item $\displaystyle \int_\Omega |v_j(x,0)|^pdx\leq C$, 
\item $\displaystyle \frac{[v_j(\cdot,0)]_{W^{s,p}(\R^n)}^p}{\displaystyle\int_\Omega |v_j(x,0)|^pdx}\leq \lambda_{s,p}+\frac{1}{j} $,
\item $\displaystyle \int_\Omega |v_j^+(x,0)|^pdx\geq\frac{1}{2}\int_\Omega |w|^pdx$,
\end{enumerate}
while 
\begin{equation}\label{JustGonnaSendJ}
\int_\Omega |e^{\mu_{s,p} t_j} v_j^+(x,t_j)|^pdx< \frac{1}{2}\int_\Omega |w|^pdx 
\end{equation}
for some $t_j\in [0,1]$.  

\par Consequently, the sequence of initial conditions $(v_j(\cdot,0))_{j\in \N}$ is bounded in $W^{s,p}_0(\Omega)$ and has a subsequence (not relabeled) that converges to a positive ground state $g$ of $\laps$ in $W^{s,p}(\R^n)$. By Theorem \ref{CompactnessLem}, it also follows that (a subsequence of) the sequence of weak solutions $(v_j)_{j\in \N}$ converges to a weak solution $\tilde w$ in $C([0,2], L^p(\Omega))\cap L^p([0,2]; W^{s,p}(\R^n))$ with $\tilde w(\cdot, 0)=g$. By Corollary \ref{cor:icgroundstate}, $\tilde w (\cdot,t)=e^{-\mu_{s,p} t}g$. 

\par In addition, by (1) and since $g$ is a ground state
$$
\int_\Omega|g|^pdx = \frac{1}{\lambda_{s,p}}[g]_{W^{s,p}(\R^n)}^p=\frac{1}{\lambda_{s,p}}\lim_{j\rightarrow\infty} [v_j(x,0)]_{W^{s,p}(\R^n)}^p\ge \int_\Omega |w|^pdx.\nonumber
$$
However, sending $j\rightarrow\infty$ in \eqref{JustGonnaSendJ} gives 
$$
\int_\Omega |g|^p dx=\int_\Omega |g^+|^pdx\le  \frac{1}{2}\int_\Omega |w|^pdx .
$$
This is a contradiction as $w\not\equiv 0$. 
\end{proof}
\begin{cor}\label{cor:sign} Assume that $v$ is a weak solution of \eqref{pParabolic}. For any positive ground state $w$ for $\laps$ and any constant $C>0$,  there is a $\tilde \delta = \tilde \delta(w,C)>0$ such that if
\begin{enumerate}
\item $\displaystyle e^{\mu_{s,p}pt}[v(\cdot,t)]^p_{W^{s,p}(\R^n)}\geq \lambda_{s,p}\int_\Omega |w|^p $ for all $t\geq 0$, %$\displaystyle\lim_{k\to \infty }e^{\mu_{s,p} s_k}v(x,s_k)=w$ in $W^{s,p}(\R^n)$

\item $\displaystyle\int_\Omega |v(x,0)|^pdx\leq C$,

\item $\displaystyle \frac{[v(\cdot,0)]_{W^{s,p}(\R^n)}^p}{\displaystyle\int_\Omega |v(x,0)|^pdx}\leq \lambda_{s,p}+\tilde \delta$,

\item $\displaystyle\int_\Omega |v^+(x,0)|^pdx\geq\frac{1}{2}\int_\Omega |w|^pdx$,
\end{enumerate}
then
$$
\int_\Omega |e^{\mu_{s,p} t} v^+(x,t)|^pdx\geq \frac{1}{2}\int_\Omega |w|^pdx .
$$
for all $t\geq 0$.
\end{cor}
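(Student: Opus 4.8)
The plan is to bootstrap Lemma~\ref{lem:sign}, which controls $\int_\Omega|e^{\mu_{s,p}t}v^+(x,t)|^pdx$ on $[0,1]$, up to all of $[0,\infty)$ by iterating over the unit intervals $[n,n+1]$, $n\in\N$. The key structural observation is that a rescaled time shift of a weak solution is again a weak solution: since both $(-\lap_p)^s$ and $\J$ are positively homogeneous of degree $p-1$ and \eqref{MainPDE} is autonomous, for each $n\in\N$ the function $v_n(x,t):=e^{\mu_{s,p}n}v(x,t+n)$ again satisfies \eqref{NaturalSpace}, \eqref{WeakForm}, and is therefore a weak solution of \eqref{pParabolic} with initial datum $g_n:=e^{\mu_{s,p}n}v(\cdot,n)\in W^{s,p}_0(\Omega)$. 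Moreover
$$
\int_\Omega\bigl|e^{\mu_{s,p}t}v_n^+(x,t)\bigr|^pdx=\int_\Omega\bigl|e^{\mu_{s,p}(t+n)}v^+(x,t+n)\bigr|^pdx,
$$
so controlling the left side for $v_n$ on $[0,1]$ is exactly the conclusion of the Corollary on $[n,n+1]$.

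First I would fix the constants. Using hypotheses $(2)$, $(3)$, the Poincar\'e inequality $\lambda_{s,p}\int_\Omega|u|^pdx\le[u]^p_{W^{s,p}(\R^n)}$, and the monotonicity of $t\mapsto e^{\mu_{s,p}pt}[v(\cdot,t)]^p_{W^{s,p}(\R^n)}$ from Corollary~\ref{GradUNon}, one obtains for all $t\ge0$
$$
e^{\mu_{s,p}pt}\int_\Omega|v(x,t)|^pdx\le\lambda_{s,p}^{-1}e^{\mu_{s,p}pt}[v(\cdot,t)]^p_{W^{s,p}(\R^n)}\le\lambda_{s,p}^{-1}[g]^p_{W^{s,p}(\R^n)}\le\lambda_{s,p}^{-1}(\lambda_{s,p}+\tilde\delta)C.
$$
Hence, setting $C':=2C$ and requiring $\tilde\delta\le\lambda_{s,p}$ gives $\int_\Omega|v_n(x,0)|^pdx=e^{\mu_{s,p}pn}\int_\Omega|v(x,n)|^pdx\le C'$ for every $n$. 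Now let $\delta_0:=\delta(w,C')$ be the constant provided by Lemma~\ref{lem:sign} and set $\tilde\delta:=\min\{\delta_0,\lambda_{s,p}\}$.

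Next I would prove by induction on $n\in\N$ that the conclusion of the Corollary holds on $[0,n]$. The base case $n=1$ is Lemma~\ref{lem:sign} applied to $v_0=v$, whose four hypotheses follow from hypotheses $(1)$ (at $t=0$), $(2)$ (as $C\le C'$), $(3)$ (as $\tilde\delta\le\delta_0$), $(4)$ of the Corollary. For the inductive step I would check the four hypotheses of Lemma~\ref{lem:sign} for $v_n$: $(1)$ holds since $[v_n(\cdot,0)]^p_{W^{s,p}(\R^n)}=e^{\mu_{s,p}pn}[v(\cdot,n)]^p_{W^{s,p}(\R^n)}\ge\lambda_{s,p}\int_\Omega|w|^pdx$ by hypothesis $(1)$ at $t=n$; $(2)$ holds by the bound in the previous paragraph; $(3)$ holds because the Rayleigh quotient is scale invariant, so the Rayleigh quotient of $v_n(\cdot,0)$ equals that of $v(\cdot,n)$, which by Proposition~\ref{RayleighGoDown} is no larger than that of $v(\cdot,0)$, hence $\le\lambda_{s,p}+\tilde\delta\le\lambda_{s,p}+\delta_0$ (Proposition~\ref{RayleighGoDown} applies because hypothesis $(1)$, together with $\int_\Omega|w|^pdx>0$, forces $v(\cdot,t)\neq0$ in $L^p(\Omega)$ for all $t$); and $(4)$ is precisely the inductive hypothesis at $t=n$, rewritten through the identity displayed above. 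Lemma~\ref{lem:sign} then yields $\int_\Omega|e^{\mu_{s,p}t}v_n^+(x,t)|^pdx\ge\frac12\int_\Omega|w|^pdx$ for $t\in[0,1]$, which by the same identity is the conclusion of the Corollary on $[n,n+1]$; combined with the inductive hypothesis this gives it on $[0,n+1]$, and letting $n\to\infty$ finishes the proof.

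The one genuinely substantive point, and the step I would be most careful about, is securing a bound on $\int_\Omega|v_n(x,0)|^pdx$ that does not deteriorate with $n$, since the constant $\delta$ in Lemma~\ref{lem:sign} depends on this bound through $C$. This uniform bound is exactly what hypotheses $(2)$ and $(3)$ of the Corollary (controlling $[g]^p_{W^{s,p}(\R^n)}$) deliver once combined with the monotonicity of $e^{\mu_{s,p}pt}[v(\cdot,t)]^p_{W^{s,p}(\R^n)}$; this is why the a priori lower bound $(1)$ alone does not suffice and hypotheses $(2)$--$(3)$ are genuinely needed. Everything else is bookkeeping with the scaling and the two displayed identities.
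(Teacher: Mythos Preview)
Your proposal is correct and follows essentially the same approach as the paper: iterate Lemma~\ref{lem:sign} on the rescaled shifts $e^{\mu_{s,p}n}v(\cdot,\cdot+n)$, using the uniform bound $\int_\Omega|e^{\mu_{s,p}t}v(x,t)|^p\,dx\le 2C$ (obtained from hypotheses $(2)$--$(3)$, Poincar\'e, and Corollary~\ref{GradUNon}) to keep the constant in Lemma~\ref{lem:sign} fixed across steps. Your write-up is in fact slightly more careful than the paper's in justifying that Proposition~\ref{RayleighGoDown} applies (via $v(\cdot,t)\neq 0$ from hypothesis~$(1)$).
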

\begin{proof} Choose
$$
\tilde \delta = \min \left(\lambda_{s,p},\delta(w,2C)\right),
$$
where $\delta(w,2C)$ is from Lemma \ref{lem:sign}. It is clear that $v$ satisfies the assumptions of Lemma \ref{lem:sign}, so that in particular
$$
\int_\Omega |e^{\mu_{s,p} } v^+(x,1)|^pdx\geq \frac{1}{2}\int_\Omega |w|^pdx .
$$
By Proposition \ref{RayleighGoDown} combined with (2) and (3)
$$
\int_\Omega |e^{\mu_{s,p}t} v(x,t)|^p dx \leq\frac{1}{\lambda_{s,p}} [e^{\mu_{s,p}t}v(\cdot,t)]^p_{W^{s,p}(\R^n)}\leq\frac{1}{\lambda_{s,p}}  [v(\cdot,0)]^p_{W^{s,p}(\R^n)}\leq C\left(1+\frac{\tilde \delta}{\lambda_{s,p}}\right)\leq 2C
$$
for any $t>0$. 

\par This inequality, together with (1) and the monotonicity of the Rayleigh quotient, implies that $e^{\mu_{s,p}k} v(x,t+k)$ satisfies properties (1)--(3) in Lemma \ref{lem:sign} with $C$ is replaced by $2C$. In particular, Lemma \ref{lem:sign} applied to $e^{\mu_{s,p}} v(x,t+1)$ yields 
$$
\int_\Omega |e^{2\mu_{s,p} } v^+(x,2)|^pdx\geq \frac{1}{2}\int_\Omega |w|^pdx .
$$
Now we can apply Lemma \ref{lem:sign} repeatedly to $e^{\mu_{s,p} k}v(x,t+k)$ for $k=2,3,\ldots$ in order to obtain the desired result.
\end{proof}
We are now ready to treat the large time behavior of solutions of equation \eqref{pParabolic}.
\begin{proof}[~Proof of Theorem \ref{LargeTthm}]
Let $v$ be a weak solution of \eqref{pParabolic}. By \eqref{eq:needed} in Corollary \ref{GradUNon}
\begin{equation}\label{Cor4u}
\frac{d}{dt} [e^{\mu_{s,p}t}v(\cdot,t)]^p_{W^{s,p}(\R^n)}\le 0
\end{equation}
for almost every $t\ge 0$. Consequently, the limit  
$$
S:=\lim_{t\rightarrow \infty}[e^{\mu_{s,p}t}v(\cdot,t)]^p_{W^{s,p}(\R^n)}
$$ 
exists. If $S=0$, there is nothing else to prove. Let us assume otherwise. 

Let $\tau_k$ be an increasing sequence of positive numbers such that $\tau_k\to \infty$ as $k\rightarrow\infty$, and define for $k=1,2,3,\ldots$
\begin{equation}\label{lastVeeKay}
v^k(x,t)=e^{\mu_{s,p} \tau_k}v(x,t+\tau_k).
\end{equation}
Then $v^k$ is a weak solution of \eqref{pParabolic} with initial data
$$
g^k(x):=e^{\mu_{s,p} \tau_k}v(x,\tau_k). 
$$
By \eqref{Cor4u}, $g^k\in W_0^{s,p}(\Omega)$ is uniformly bounded in $W^{s,p}(\R^n)$. Hence, it is clear that $v^k$ satisfies the hypotheses of Theorem \ref{CompactnessLem}.  Therefore, we can extract a subsequence $\{v^{k_j}\}_{j\in\N}$ converging to a weak solution $w$ as detailed in Theorem \ref{CompactnessLem}.  We may also assume that $v^{k_j}(\cdot,t)$ converges to $w(\cdot,t)$ in $W^{s,p}(\R^n)$ for almost every time $t\ge 0$ since this occurs for a subsequence.  

We now observe that by \eqref{Cor4u}
\begin{equation}\label{Sequation}
S=e^{\mu_{s,p}pt}\lim_{j\rightarrow \infty}[v^{k_j}(\cdot,t)]^p_{W^{s,p}(\R^n)}=e^{\mu_{s,p}tp}[w(\cdot,t)]^p_{W^{s,p}(\R^n)}
\end{equation}
for almost every time $t\ge 0$.  Since $[0,\infty)\ni t\mapsto [w(\cdot,t)]^p_{W^{s,p}(\R^n)}$ is absolutely continuous (by Lemma \ref{AbsPhi}),
$$
S=e^{\mu_{s,p}tp}[w(\cdot,t)]^p_{W^{s,p}(\R^n)}
$$
holds for all $t\ge 0$. As $w$ is a solution of \eqref{pParabolic}, \eqref{eq:needed} in Corollary \ref{GradUNon} implies
$$
0=\frac{1}{p}\frac{d}{dt}\left(e^{\mu_{s,p}tp}[w(\cdot,t)]^p_{W^{s,p}(\R^n)}\right)=e^{(\mu_{s,p}p)t}\left(\mu_{s,p} [w(\cdot,t)]^p_{W^{s,p}(\R^n)}- \int_{\Omega}|w_t(x,t)|^pdx\right)
$$
for almost every $t\ge 0$.

A more careful inspection of the proof of \eqref{Dvvtbound} reveals that if
$$
\mu_{s,p} [w(\cdot,t)]^p_{W^{s,p}(\R^n)}= \int_{\Omega}|w_t(x,t)|^pdx
$$
then we must have $[w(\cdot,t)]^p_{W^{s,p}(\R^n)}=\lambda_{s,p}\int_{\Omega}|w(x,t)|^pdx$.  Therefore $w(\cdot,t)$ is ground state almost every $t>0$. The absolute continuity of $[w(\cdot,t)]_{W^{s,p}(\R^n)}$ and $\|w(\cdot,t)\|_{L^p(\Omega)}$ then implies that $w(\cdot,t)$ is a ground state for all $t\geq 0$. By Corollary \ref{cor:icgroundstate}, 
$$
w(x,t)=e^{-\mu_{s,p}t} w_0,
$$
where $w_0(x)=w(x,0)$ is a ground state. 

For any $t_0\in [0,T]$ such that the limit \eqref{Sequation} holds, we have by Proposition \ref{RayleighGoDown}
\begin{align*}
\lim_{t\rightarrow \infty}\frac{[v(\cdot,t)]^p_{W^{s,p}(\R^n)}}{\int_{\Omega}|v(x,t)|^pdx} &=\lim_{j\rightarrow \infty} \frac{[v(\cdot,\tau_{k_j}+t_0)]^p_{W^{s,p}(\R^n)}}{\int_{\Omega}|v(x,\tau_{k_j}+t_0)|^pdx}\\
&=\lim_{j\rightarrow \infty} \frac{[v^{k_j}(\cdot,t_0)]^p_{W^{s,p}(\R^n)}}{\int_{\Omega}|v^{k_j}(x,t_0)|^pdx}\\
&=\frac{[w(\cdot,t_0)]^p_{W^{s,p}(\R^n)}}{\int_{\Omega}|w(x,t_0)|^pdx}\\
&=\frac{[w_0]^p_{W^{s,p}(\R^n)}}{\int_{\Omega}|w_0|^pdx}\\
&= \lambda_{s,p}.
\end{align*}
Since weak convergence together with the convergence of the norm implies strong convergence, the limit $w_0=\lim_{j\rightarrow\infty}e^{\mu_{s,p}\tau_{k_j}}v(\cdot, \tau_{k_j})$ holds in $W^{s,p}(\R^n)$. We conclude that $\{e^{\mu_{s,p}\tau_k}v(\cdot, \tau_k)\}_{k\in \N}$ has a subsequence converging in $W^{s,p}(\R^n)$ to a ground state $w_0$.

Recall that, due to the simplicity of the ground states, $w_0$ is determined completely by its sign and the constant 
$$
S=[w_0]^p_{W^{s,p}(\R^n)}.
$$
We may assume $w_0\geq 0$, if not we can consider $-v$ instead. 

Now we note that for any $t\geq 0$
\begin{equation}\label{eq:ass1}
e^{\mu_{s,p}pt}[v(\cdot,t)]_{W^{s,p}(\R^n)}^p\geq \lim_{j\to\infty}e^{\mu_{s,p}p\tau_{k_j}}[v(\cdot,\tau_{k_j})]_{W^{s,p}(\R^n)}^p=[w_0]_{W^{s,p}(\R^n)}^p=\lambda_{s,p}\int_\Omega |w_0|^p.
\end{equation}
Since $v^{k_j}(x,0)=g^{k_j}(x)$ converges in $W^{s,p}(\R^n)$ to ground state $w_0$, we also have
\begin{equation}\label{eq:ass2}
\lim_{j\rightarrow \infty} \frac{[v^{k_j}(\cdot,0)]^p_{W^{s,p}(\R^n)}}{\int_{\Omega}|v^{k_j}(x,0)|^pdx}=\lambda_{s,p}
\end{equation}
and
\begin{equation}\label{eq:ass3}
\int_\Omega  |(v^{k_j}(x,0))^+|^p dx \geq \frac12\int_\Omega |w_0|^p dx,
\end{equation}
whenever $j$ is large enough. In addition, due to the monotonicity of the $W^{s,p}$-norm, 
\begin{equation}\label{eq:ass4}
\int_\Omega |v^{k_j}(x,0)|^p dx\leq \frac{1}{\lambda_{s,p}}[v^{k_j}(\cdot,0)]_{W^{s,p}(\R^n)}^p\leq \frac{1}{\lambda_{s,p}}[g]_{W^{s,p}(\R^n)}^p,
\end{equation}
where $g=v(\cdot,0)$.
From \eqref{eq:ass1}--\eqref{eq:ass4}, it is now clear that for $j$ large enough, $v^{k_j}$ satisfies assumptions (1)--(4) in Corollary \ref{cor:sign}, with $w=w_0$ and $C=[g]_{W^{s,p}(\R^n)}^p/\lambda_{s,p}$. As a result, 
$$
\int_\Omega |e^{\mu_{s,p} t} (v^{k_j})^+(x,t)|^pdx\geq \frac{1}{2}\int_\Omega |w_0|^pdx
$$
for all $t\geq 0$, which implies
\begin{equation}
\label{eq:signcontrol}
\int_\Omega |e^{\mu_{s,p} t} (v)^+(x,t)|^pdx\geq \frac{1}{2}\int_\Omega |w_0|^pdx
\end{equation}
for $t$ large enough.

Suppose now that we pick another convergent subsequence of $\{e^{\mu_{s,p}\tau_k}v(\cdot, \tau_k)\}_{k\in \N}$. Then arguing as above, the sequence converges in $W^{s,p}(\R^n)$ to a ground state $w_1$ and by \eqref{Sequation}, 
$$
[w_1]^p_{W^{s,p}(\R^n)}=S.
$$
By the simplicity of ground states, $w_1=w_0$ or $w_1=-w_0$. Passing $t\to \infty$ in \eqref{eq:signcontrol}, we obtain 
$$
\int_\Omega (w_1^+)^p dx \geq \frac12\int_\Omega |w_0|^p dx, 
$$
forcing $w_1$ to be positive and hence $w_1=w_0$. As the sequence
$\{\tau_k\}_{k\in \N}$ was chosen arbitrarily, it follows that $e^{\mu_{s,p} t}v(\cdot, t)\rightarrow w_0$ as $t\rightarrow \infty$ in $W^{s,p}(\R^n)$. 
\end{proof}
We are finally in a position to prove Theorem \ref{LargeTthmvisc}.
\noindent \begin{proof}[~Proof of Theorem \ref{LargeTthmvisc}] Let $\tau_k$ be an increasing sequence of positive numbers such that $\tau_k\to \infty$ as $k\rightarrow\infty$. In Theorem \ref{LargeTthm}, we established that $\lim_{k\rightarrow \infty}e^{\mu_{s,p} \tau_k}v(\cdot,\tau_k)=w$ in $W^{s,p}(\R^n)$.  In view of Proposition \ref{ViscSolnResult}, it suffices to show that this convergence occurs uniformly on $\overline{\Omega}$. 

% that the $v^k$ converges occurs uniformly in $\overline{\Omega}\times[0,1]$ to $e^{-\mu_{s,p} t}w(x)$. Here $w$ is the limit from Theorem

\par To this end, define $v^k$ as in \eqref{lastVeeKay}. We also remark that $e^{-\mu_{s,p} t}\Psi $ is a solution of equation \eqref{MainPDE}.
By the comparison principle, 
\begin{equation}\label{anotherVeekayEst}
|v^k(x,t)|\leq \Psi(x)
\end{equation}
for $(x,t)\in \R^n\times [-1,1]$ for all $k\in \N$ large enough. These bounds with Theorem \ref{Holderthm} give that $v^k$ is uniformly bounded in $C^\alpha(B\times [0,1])$ for any ball $B\subset\subset \Omega$. By a routine covering argument, $v^k$ is then uniformly bounded in $C^\alpha(K\times [0,1])$ for any compact $K\subset\subset \Omega$. We now claim that the sequence $v^k$ is equicontinuous in $\overline \Omega\times [0,1]$. 

Fix $\e>0$. Recall that $\Psi$ is continuous up to the boundary of $\Omega$. By \eqref{anotherVeekayEst}, it follows that there is $\delta$ so that 
$|v^k(x,t)|\leq \e/2$ whenever $d(x):=d(x,\partial\Omega)<\delta$ and $t\in [0,1]$. Now we will show that if $|x-y|+|t-\tau|$ is small enough, then $|v^k(x,t)-v^k(y,\tau)|\leq \e$. We treat three cases differently as follows.
\begin{enumerate}
\item $ d(x)<\delta/2$ and $d(y)<\delta/2$: Then 
$$|v^k(x,t)-v^k(y,\tau)|\leq |v^k(x,t)|+|v^k(y,\tau)|\leq \e.$$
\item $d(x)<\delta/2$ and $d(y)>\delta/2$: Then $|x-y|<\delta/2$ implies $d(y)<\delta$ so that again 
$$|v^k(x,t)-v^k(y,\tau)|\leq |v^k(x,t)|+|v^k(y,\tau)|\leq \e.$$
\item $d(x)>\delta/2$ and  $d(y)>\delta/2$: Then by the local H\"older estimates 
$$
|v^k(x,t)-v^k(y,\tau)|\leq C_\delta \left(|x-y|+|t-\tau|\right)^\alpha\leq \e
$$
if we choose $|x-y|+|t-\tau|\leq (\e/C_\delta)^{1/\alpha}$.
\end{enumerate}
Hence, if
$$
|x-y|+|t-\tau|\leq \min\left(\delta/2, (\e/C_\delta)^{1/\alpha}\right)
$$
then $|v^k(x,t)-v^k(y,\tau)|\leq \e$. Therefore, the sequence $v^k$ is equicontinuous on $\overline\Omega \times [0,1]$. By the Arzel\`a-Ascoli theorem, we can extract a subsequence $v^{k_j}$ converging to $e^{-\mu_{s,p}t}w$, the limit in \eqref{FullTime}, uniformly in $\overline\Omega\times [0,1]$. 
\end{proof}

\bibliographystyle{amsrefs}
\bibliography{refnonlocal}

\end{document}